\newcommand{\MB}[1]{\textcolor{black}{#1}}
\newcommand{\RRR}[1]{\textcolor{black}{#1}}
\newcommand{\ndof}{\mathsf{ndof}}
\newcommand{\dd}{\mathsf{d}}
\newcommand{\UU}{\mathsf{U}}
\newcommand{\VV}{\mathsf{V}}
\newcommand{\FF}{\mathsf{F}}
\newcommand{\GG}{\mathsf{G}}
\newcommand{\WW}{\mathsf{W}}
\newcommand{\ZZ}{\mathsf{Z}}
\newcommand{\EE}{\mathsf{E}}
\newcommand{\llbrace}{\lbrace\hspace{-0.15cm}\lbrace}
\newcommand{\rrbrace}{\rbrace\hspace{-0.15cm}\rbrace}
\newcommand{\norm}[1]{\| #1\|}
\newcommand{\trinorm}[1]{{\vert\kern-0.25ex\vert\kern-0.25ex\vert #1 \vert\kern-0.25ex\vert\kern-0.25ex\vert}}
\newtheorem{theorem}{Theorem}
\newtheorem{remark}{Remark}%
\newtheorem{definition}{Definition}
\newtheorem{corollary}{Corollary}
\newtheorem{lemma}{Lemma}[section]
\newtheorem{assumption}{Assumption}
\begin{document}

\title{Discontinuous Galerkin discretization of coupled poroelasticity-elasticity problems }

\author[$\star$]{Paola F. Antonietti}
\author[$\star$]{Michele Botti}
\author[$\star$]{Ilario Mazzieri}

\affil[$\star$]{MOX, Laboratory for Modeling and Scientific Computing, Dipartimento di Matematica, Politecnico di Milano, Piazza Leonardo da Vinci 32, I-20133 Milano, Italy}

\affil[ ]{\texttt {\{paola.antonietti,michele.botti,ilario.mazzieri\}@polimi.it}}

\maketitle

\noindent{\bf Keywords }: Poroelasticity; multiphysics; space-time discontinuous Galerkin methods; polygonal and polyhedral meshes; stability and convergence analysis.
	
\begin{abstract}
This work is concerned with the analysis of a space-time finite element discontinuous Galerkin method on polytopal meshes (XT-PolydG) for the numerical discretization of wave propagation in coupled poroelastic-elastic media. The mathematical model consists of the low-frequency Biot's equations in the poroelastic medium and the elastodynamics equation for the elastic one. 
To realize the coupling, suitable transmission conditions on the interface between the two domains are (weakly) embedded in the formulation. 
The proposed PolydG discretization in space is then coupled with a dG time integration scheme, resulting in a full space-time dG discretization.  
We present the stability analysis for both the continuous and the semidiscrete formulations, and we derive error estimates for the semidiscrete formulation in a suitable energy norm. 
The method is applied to a wide set of numerical test cases to verify the theoretical bounds. Examples of physical interest are also presented to investigate the capability of the proposed method in relevant geophysical scenarios.
\end{abstract}

\maketitle
\section{Introduction}

The numerical simulation of wave propagation in heterogeneous media is an important aspect of a wide range of scientific problems including acoustic engineering \cite{TKWF2010}, vibro-acoustics \cite{krishnan}, aeronautical engineering \cite{castagnede1998ultrasonic}, biomedical engineering \cite{HAIRE1999291}, computational seismology \cite{carcione2014book}, and oil and gas exploration \cite{Zhang2019,Morency2011}.
In this framework, recent studies have been concerned with the propagation of seismic waves in coupled elastic and poroelastic media. For the region of reservoirs, Biot's model of poroelastic  wave equations is selected while for the background region \cite{biot1955theory}, the visco-elastic wave equation is employed, cf. \cite{Morency2008}.
Poroelastic-elastic problems model elastic waves impacting a porous material and consequently propagating through it. The coupling between the elastic and the poroelastic domain is a more general realization of the physically consistent transmission conditions discussed in \cite{David2021, Morency2008,Vashisth1991}. Indeed, in our case, partial filtration at the interface can be also taken into account.
Based on a second-order in-time displacement formulation, in this paper, we present a high-order space-time discontinuous Galerkin method on polytopal grids (XT-PolydG) for the discretization of a coupled poroelastic-elastic problem.
The theory developed completes the one presented in \cite{AntoniettiMazzieriNatipoltri2021} and \cite{bonaldi}, where coupled poroelastic-acoustic and elastic-acoustic problems were studied, respectively. 
We remark that the geometric flexibility, due to mild regularity requirements on the underlying polytopal mesh, together with the arbitrary-order accuracy featured by the proposed XT-PolydG method is fundamental within the applicative context under investigation as it guarantees: (i) \textit{flexibility} in the representation of the geometry; (ii) high level of \textit{accuracy} in both space and time dimensions; (iii) \textit{efficiency} for parallel computation. Finally, the coupling conditions between the elastic and the poroelastic domains are naturally incorporated (in a weak sense) in the proposed scheme.
 
In the literature, we can find many works concerning the numerical discretization of coupled poroelastic models. Here, we recall, e.g., the semi-analytical solution and the plane wave decomposition method \cite{lefeuve2012semi,peng2021benchmarking},
the Lagrange Multipliers method \cite{rockafellar1993lagrange,zunino,Flemisch2006}, the finite and boundary element methods \cite{BERMUDEZ200317,FKTW2010,ferro2006wave}, mixed and discontinuous finite elements \cite{bause2017space,phillips2008coupling,girault2011domain,Anaya2020}, the spectral and pseudo-spectral element method \cite{Morency2008,Sidler2010}, 
the finite difference method,  \cite{dai1995wave,wenzlau2009finite,masson2010finite,zhang20143d},
the ADER scheme \cite{ward2017discontinuous, delapuente2008,chiavassa_lombard_2013,zhan2020unified,Zhang2019,dupuy2011wave}, the virtual element method  \cite{burger2021virtual}, and the references therein.
Here, by taking inspiration from \cite{girault2011domain}, we analyze an Interior Penalty discontinuous Galerkin formulation of the coupled problem, where the interface conditions are naturally taken into account by the penalization terms. The geometrical flexibility and high-order accuracy of the proposed scheme are ensured by the use of polygonal meshes and by the dG discretization in the time dimension.   
We refer the reader to 
\cite{JOHNSON1993,Delfour81,Hughes88,FRENCH1993}
for early results on time dG schemes for wave type equations, to \cite{Vegt2006,AbPeHa06,DoFiWi16} for 
first-order hyperbolic problems,
to \cite{KrMo16,BaCaDiSh18} and to \cite{GoScWi17,MoPe18,PeScStWi20} for Trefftz and tent-pitching  techniques, 
 to \cite{banks2014high,Paper_Dg-Time,AntoniettiMiglioriniMazzieri2021} for the second-order wave equation, and to \cite{nochetto2018space} for a comprehensive review.
On the other hand, regarding polytopal dG methods, we refer the reader to \cite{AnBrMa2009, BaBoCoDiPiTe2012, AntoniettiGianiHouston_2013, CangianiDongGeorgoulisHouston_2016, CongreveHouston2019,cangiani2020hpversion, CangianiDongGeorgoulis_2017} for early results on elliptic and parabolic problems, to \cite{AntoniettiMazzieri2018, AntoniettiMazzieriMuhrNikolicWohlmuth_2020} for linear and non-linear hyperbolic problems, and to  \cite{bonaldi,AntoniettiBonaldiMazzieri_2019b,AntoniettiMazzieriNatipoltri2021,ABM_Vietnam} for coupled wave propagation problems. A dG approximation of the fully coupled thermo-poroelastic problem is presented in \cite{AntoniettiBonettiBotti_2023}. Wave propagation in  thermo-poroelastic media with dG methods is discussed in \cite{bonetti2023numerical}.\\

The remaining part of the paper is structured as follows. In Section~\ref{sec::physical} we present the coupled poroelastic-elastic differential model for wave propagation in heterogeneous media, we discuss the weak formulation and present the stability analysis in the continuous setting. In Section~\ref{sec::numerical}
we introduce the polytopal discontinuous Galerkin space discretization and analyze its stability.  The convergence analysis of the aforementioned discretization is discussed in Section~\ref{sec:semi_discrete_error}, while a dG time integration scheme and its algebraic formulation are presented in Section~\ref{sec:time_integration}. 
Section~\ref{sec:numerical_results} contains
verification test cases to validate the theoretical error bounds as well as numerical tests of physical interest. Finally, in
Section~\ref{sec:conclusions} we draw some conclusions and discuss some perspectives about future work.

\subsection*{Notation}
Let $\Omega\subset\mathbb{R}^d$, $d=2,3$, be an open, convex polygonal/polyhedral domain with Lipschitz boundary $\partial\Omega$.
In what follows, for $X\subseteq\Omega$, the notation $\bm{L}^2(X)$ is adopted in place of $[L^2(X)]^d$, with $d\in\{2,3\}$. The scalar product in $L^2(X)$ is denoted by $(\cdot,\cdot)_X$, with associated norm $\norm{\cdot}_X$.  
Similarly, $\bm{H}^\ell(X)$ is defined as $[H^\ell(X)]^d$, with $\ell\geq 0$, equipped with the norm $\norm{\cdot}_{\ell,X}$, assuming conventionally that $\bm{H}^0(X)\equiv\bm{L}^2(X)$. 
In addition, we will use $\bm{H}(\textrm{div},X)$ to denote the space of $\bm{L}^2(X)$ functions with square integrable divergence. For a given final time $T>0$, $k\in\mathbb{N}$, and a Hilbert space $H$, the usual notation $C^k([0,T];H)$ is adopted for the space of $H$-valued functions, $k$-times continuously differentiable in $[0,T]$. 
The notation $x\lesssim y$ stands for $x\leq C y$, with $C>0$, independent of the discretization parameters, but possibly dependent on the physical coefficients and the final time $T$.

\section{The physical model and governing equations}\label{sec::physical}
The computational domain $\Omega$ can be seen as the union of two disjoint, polygonal/polyhedral regions: $\Omega = \Omega_e\cup \Omega_p$, representing the elastic and the poroelastic domains, respectively. The two subdomains share part of their boundary, resulting in the Lipschitz-regular interface $\Gamma_I = \partial\Omega_e\cap\partial\Omega_p$, being $\partial\Omega_e $ and $\partial\Omega_p$ the boundaries of elastic and poroelastic domains, respectively. See Figure~\ref{fig:domain_example}. 
We set $\Gamma_{e}=\partial\Omega_e\setminus\Gamma_I$ and $\Gamma_{p}=\partial\Omega_p\setminus\Gamma_I$, so that $\Gamma_{i}\cap\Gamma_I=\emptyset$ for $i=\{e,p\}$.
For the sake of presentation, on $\Gamma_i$, $i=\{e,p\}$ we apply homogeneous Dirichlet conditions. The general case follows similarly. 
Additionally, we suppose that the Hausdorff measures of $\Gamma_{e}$, $\Gamma_{p}$, and $\Gamma_I$ are strictly positive, but the following theory also covers the cases: \textit{i)} $\Omega_p = \Gamma_I = \emptyset$, i.e. $\Omega\equiv \Omega_e$ and \textit{ii)} $\Omega_e = \Gamma_I = \emptyset$, i.e. $\Omega \equiv \Omega_p$; as well as \textit{iii)} $\partial\Omega_p=\Gamma_I$, i.e. $\Gamma_p=\emptyset$ and \textit{iv)} $\partial\Omega_e=\Gamma_I$, i.e. $\Gamma_e=\emptyset$. 
\begin{figure}
    \centering
    \includegraphics[width=0.45\textwidth]{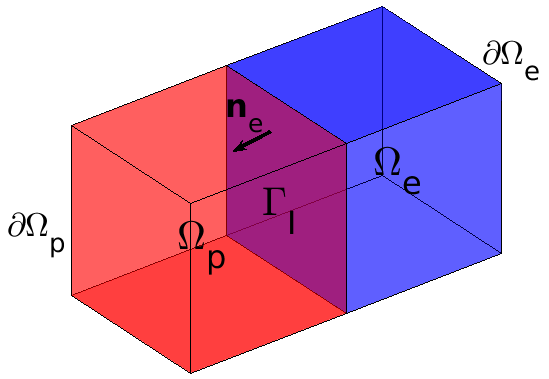}
    \includegraphics[width=0.4\textwidth]{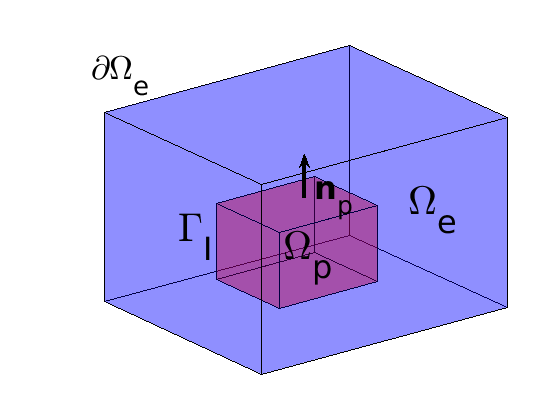}
    \caption{Simplified representations of the domain $\Omega = \Omega_p \cup \Omega_e$. Here,  $\Gamma_I = \partial \Omega_e \cap \partial \Omega_p$ represents the interface between the elastic and the poroelastic domains, where $\partial\Omega_e$ and $\partial\Omega_p$ are the boundaries of elastic and poroelastic sub-domains, respectively.}
    \label{fig:domain_example}
\end{figure}
The outer unit normal vectors to $\partial\Omega_e$  and $\partial\Omega_p$ are denoted by $\bm{n}_e$, and $\bm{n}_p$ respectively, so that $\bm{n}_p=-\bm{n}_e$ on $\Gamma_{I}$.

\subsection*{Elastic domain} 
In the solid elastic domain $\Omega_e$, we consider the linear visco-elastodynamics model
\begin{equation}
\rho_e\ddot{\bm{u}}_e + 2\rho_e \zeta \dot{\bm{u}}_e  + \rho_e\zeta^2 \bm{u}_e -  \nabla\cdot\bm{\sigma}_e(\bm{u}_e)=\bm{f}_e,  \quad \text{in }\Omega_e\times(0,T],
\label{eq::elasticity}
\end{equation}
where $\bm u_e$ represents the solid displacement, $\rho_e >0$ is the medium density, $\zeta > 0$ is an attenuation parameter \cite{ABM_Vietnam}, and  $\bm{f}_e$ is a given external load. The stress tensor $\bm{\sigma}_e(\bm{u})$ is defined as $\bm{\sigma}_e(\bm{u}) = \mathbb{C}:\bm{\epsilon}(\bm{u})$,  where
 $\bm{\epsilon}(\bm{u})=\frac{1}{2}(\nabla\bm{u}+\nabla\bm{u}^T)$ is the strain tensor, and $\mathbb{C}$ is the fourth-order, symmetric and uniformly elliptic elasticity tensor defined by
 \begin{equation*}
\mathbb{C}:\bm{\tau} = 2\mu \bm{\tau} + \lambda \rm{tr}(\bm\tau) \qquad\text{for all } \bm{\tau}\in\mathbb{R}^{d\times d},
\end{equation*}
with $\rm{tr}(\bm{\tau}) = \sum_{i=1}^d \bm{\tau}_{ii}$.
Here, $\lambda\ge0$ and $\mu\ge\mu_0>0$ are the Lam\'e coefficients.

\subsection*{Poroelastic domain} In the poroelastic domain $\Omega_p$, following \cite{AntoniettiMazzieriNatipoltri2021}, we consider the low-frequency Biot's equations:
\begin{equation}
\begin{cases}
\rho_p\ddot{\bm{u}}_p + 
\rho_f\ddot{\bm{u}}_f
+ 2\rho_p \zeta \dot{\bm{u}}_p  + \rho_p\zeta^2 \bm{u}_p
-\nabla\cdot\bm{\sigma}_p(\bm{u}_p,\bm{u}_f)=\bm{f}_p,  &\text{in }\Omega_p\times(0,T],\\[5pt]
\rho_f\ddot{\bm{u}}_p + 
\rho_w\ddot{\bm{u}}_f + 
\frac{\eta}{k}\dot{\bm{u}}_f
+\nabla p(\bm{u}_p,\bm{u}_f)=\bm{g}_p,  &\text{in }\Omega_p\times(0,T].
\end{cases}
\label{eq::poroel}
\end{equation}
Here, $\bm{u}_p$ and $\bm{u}_f$ represent the solid and filtration displacements, respectively. In  \eqref{eq::poroel} the average density $\rho_p$ is given by $\rho_p=\phi\rho_f+(1-\phi)\rho_s$, where $\rho_s>0$ is the solid density, $\rho_f>0$ is the saturating fluid density,  $\rho_w$ is defined as $\rho_w=\frac{a}{\phi}\rho_f$, being $\phi$ the \textit{porosity} satisfying $0<\phi_0\leq\phi\leq\phi_1<1$, and being $a>1$ the \textit{tortuosity} measuring the deviation of the fluid paths from straight streamlines, cf. \cite{souzanchi2013tortuosity}. The dynamic \textit{viscosity} of the fluid is represented by $\eta>0$  while the absolute \textit{permeability} by $k>0$. In  \eqref{eq::poroel}, $\textbf{f}_p$ and $\textbf{g}_p$ are given (regular enough) loading and source terms, respectively.
%
%
In $\Omega_p$,  we assume the following constitutive laws, cf. \cite{ezziani}, for the stress $\bm{\sigma}_p$ and pressure $p$:
\begin{align}
& \bm{\sigma}_p(\bm{u}_p,\bm{u}_f)= \bm{\sigma}_e(\bm{u}_p) 
-\beta\, p(\bm{u}_p,\bm{u}_f) \bm{I}, 
&& p(\bm{u}_p,\bm{u}_f) = -m(\beta \nabla\cdot\bm{u}_p+\nabla\cdot \bm{u}_f),
\label{eq::const_sigma_press}
\end{align}
where the Biot--Willis's coefficient $\beta$ and the  Biot's modulus  $m$  are such that $\phi<\beta\le1$ and $m\ge m_0>0$. It can be shown that the dilatation coefficient of the saturated matrix corresponds to $\lambda_f=\lambda+\beta^2m$. 
A summary of all the model coefficients together with their physical meaning and unit of measure is given in Table \ref{tab::table_poroelastic} below.  
To be consistent, in \eqref{eq::poroel} we consider the same viscous model as in \eqref{eq::elasticity}. We remark that other viscous models can be considered in both equations, see e.g. \cite{Morency2008}, but are beyond the scope of this work.

\subsection*{Poroelastic-elastic coupling} On $\Gamma_{I}$ the coupling between the elastic and the poroelastic domains is achieved by imposing the following transmission conditions, expressing continuity of the normal stresses, the continuity of displacements, and the absence of fluid flow into the elastic domain:
\begin{align}
\bm{\sigma}_e(\bm u_e)\bm{n}_p & = \bm{\sigma}_e(\bm{u}_p)\bm{n}_p
- \delta \beta\, p(\bm{u}_p,\bm{u}_f) \bm{I}\bm{n}_p,  &\text{in }\Gamma_I\times(0,T],& \label{eq::contstress_elporo}\\
\bm{u}_e &= \bm{u}_p,	&\text{in }\Gamma_I\times(0,T],& \label{eq::cont_disp} \\
 ((1-\delta)\beta \bm u_p  + \bm{u}_f) \cdot \bm{n}_p & = 0, &\text{in }\Gamma_I\times(0,T],& \label{eq:noflowrate}
\end{align}
for $\delta \in [0,1]$ representing the fluid entry resistance of the interface. In the case $\delta=1$, condition \eqref{eq:noflowrate} reduces to $\bm{u}_f \cdot \bm{n}_p=0$, namely there is no filtration through the interface; whereas if $1-\delta=\phi\beta^{-1}<1$, then \eqref{eq:noflowrate} imposes that the normal component of the fluid displacement $\bm{u}_p+\phi^{-1}\bm{u}_f$ is zero, meaning that the fluid is partially flowing into the interface.  
This assumption on the coupling conditions is somewhat similar to the one that is considered \cite{AntoniettiMazzieriNatipoltri2021} for the poroelastic--acoustic case. For the latter, the continuity of the pressure at the interface was a function of the parameter $\tau \in [0,1]$ modeling the closing/opening of the pores at the interface.

\subsection*{Coupled problem}
Supplementing the previous constitutive equations with homogeneous Dirichlet boundary conditions, the coupled \textit{poroelastic-elastic problem} reads: 
for any $t \in (0,T]$, find the vector-fields $\UU = (\bm{u}_e,\bm u_p, \bm{u}_f): \Omega_e \times \Omega_p \times \Omega_p \rightarrow\mathbb{R}$ such that \eqref{eq::elasticity}--\eqref{eq::poroel} coupled with \eqref{eq::contstress_elporo}--\eqref{eq:noflowrate} hold. 
Finally, we close the problem by considering initial conditions $\UU(\cdot,0) = \UU_0 =  (\bm{u}_{0e}, \bm{u}_{0p},\bm{u}_{0f})$ and $\dot{\UU}(\cdot,0) = \VV_0 = (\bm{v}_{0e}, \bm{v}_{0p},\bm{v}_{0f})$.

\subsection{Weak formulation and stability estimates}
In order to state the weak formulation taking into account the essential boundary conditions, we also introduce the subspaces 
$$
\begin{aligned}
\bm{V}^i &= \{ \bm{v}\in \bm{H}^1(\Omega_i)  \,|\, \bm{v}_{|\Gamma_{i}} = \bm{0}\}, \; i =\{e,p\}, \\
\bm{W}^p &= \{ \bm{z}\in \bm{H}(\textrm{div},\Omega_p) \,|\, (\bm{z}\cdot\bm{n}_p)_{|\Gamma_{p}} = 0\}.
\end{aligned}
$$
and define the Hilbert spaces $ \bm V_0 = \bm L^2(\Omega_e) \times \bm L^2(\Omega_p) \times \bm L^2(\Omega_p)$
and $\bm V = \bm V^e \times \bm V^p \times \bm{W}^p$, endowed with the following norm and seminorm, respectively
\begin{align*}
\norm{\UU}^2_{0} & = \norm{\rho_e^{1/2}\bm u_e}_{\Omega_e}^2 + \norm{\rho_u^{1/2}\bm  u_p}_{\Omega_p}^2 
    +  \norm{(\rho_f \phi)^{1/2}(\bm u_p + \phi^{-1} \bm u_f )}_{\Omega_p}^2 \quad \forall \, \UU \in \bm V_0, \\   
    |\UU|^2_{\bm V} & = \norm{\mathbb{C}^{1/2}:\bm{\epsilon}(\bm u_e)}_{\Omega_e}^2 + \norm{\rho_e^{1/2} \zeta \bm u_e}_{\Omega_e}^2 + \norm{\mathbb{C}^{1/2}:\bm{\epsilon}(\bm u_p)}_{\Omega_p}^2 + \norm{\rho^{1/2} \zeta \bm u_p}_{\Omega_p}^2  \\
    & \quad + \norm{m^{1/2}\nabla\cdot(\beta \bm u_p + \bm u_f)}_{\Omega_p}^2 \quad \forall \, \UU \in \bm V, 
\end{align*}
where $\rho_u = (1-\phi)\rho_s/2.$
The weak form  of \eqref{eq::elasticity}--\eqref{eq:noflowrate} reads as: 
for any $t\in(0,T]$, find $ \UU \in \bm V $ s.t. $\bm u_e = \bm u_p$ on $\Gamma_I$ and 
\begin{equation}\label{eq::weakform}
\mathcal{M} (\ddot{\UU}, \VV) + \mathcal{D}(\dot{\UU},\VV) +
\mathcal{A}(\UU,\VV)  + \mathcal{C}(\UU,\VV) =
\mathcal{F}(\VV) \quad  \forall \; \VV \in \bm V,
\end{equation}
 with $\UU(\cdot,0) = \UU_0 \in \bm V$ and $\dot{\UU}(\cdot,0)= \VV_0 \in \bm V_0$.
In \eqref{eq::weakform} $\mathcal{F} : \Omega_e \times \Omega_p \times \Omega_p \rightarrow \mathbb{R} $ is a linear functional defined as $$ \mathcal{F}(\VV) =  (\bm{f}_e, \bm v_e)_{\Omega_e} +  (\bm{f}_p,\bm v_p)_{\Omega_p} + (\bm{g}_p,\bm v_f)_{\Omega_p}
$$
for any $ \VV = (\bm{v}_e,\bm{v}_p,\bm{v}_f) \in \bm V$, while for any $\UU, \VV \in \bm V$ we have set 
\begin{equation}\label{eq:bilinear_forms}
    \begin{aligned}
    \mathcal{M}(\UU,\VV)  & = (\rho_e \bm u_e, \bm v_e)_{\Omega_e} +  (\rho_p \bm{u}_p + \rho_f \bm u_f , \bm{v}_p)_{\Omega_p} 
    +  ( \rho_f \bm u_p + \rho_w \bm u_f, \bm v_f)_{\Omega_p}, \\
    \mathcal{D} (\UU,\VV) & = 
	 (2\rho_e \zeta \bm u_e, \bm v_e)_{\Omega_e} + (2\rho_p \zeta \bm u_p, \bm v_p)_{\Omega_p}  + (\eta k^{-1} \bm{u}_f,\bm{v}_f)_{\Omega_p}, \\
    \mathcal{A}(\UU,\VV)  & =    (\bm{\sigma}_e(\bm{u}_e),\bm{\epsilon}(\bm{v}_e))_{\Omega_e}+ (\rho_e \zeta^2 \bm u_e, \bm v_e)_{\Omega_e} + (\bm{\sigma}_e(\bm{u}_p),\bm{\epsilon}(\bm{v}_p))_{\Omega_p} + (\rho_p \zeta^2 \bm u_p, \bm v_p)_{\Omega_p} \\ & \quad + 
    (m\nabla\cdot(\beta\bm{u}_p+\bm u_f),\nabla\cdot(\beta\bm{v}_p+\bm{v}_f))_{\Omega_p} \\
\mathcal{C}(\UU,\VV) & = -\langle \bm{\sigma}_e(\bm u_e)\bm{n}_p,\bm{v}_p - \bm v_e \rangle_{\Gamma_I} + 
\langle p(\bm{u}_p,\bm u_f), ((1-\delta)\beta \bm{v}_p + \bm{v}_f)\cdot\bm n_p \rangle_{\Gamma_I}, 
\end{aligned}
\end{equation}
with $\langle \cdot,\cdot\rangle_{\Gamma_I}$  in \eqref{eq:bilinear_forms} denoting the $H^{\frac12}(\Gamma_I)$-$H^{-\frac12}(\Gamma_I)$ duality product. Before presenting a stability estimate for the solution of problem \eqref{eq::weakform} we define, for all $\UU \in C^1([0,T];\bm V_0)\cap C^0([0,T];\bm V)$, the energy norm
\begin{equation}\label{eq:energy}
\|\UU(t)\|_{\mathcal{E}}^2 =
\norm{\dot{\UU}(t)}^2_{0} + |\UU(t)|_{\bm V}^2 +  \int_0^t \mathcal{D}(\dot{\UU},\dot{\UU})(s) \,ds + \mathcal{D}(\UU,\UU)(0) \quad t \in (0,T],
\end{equation}
and we adopt the notation  $\|\UU_0\|_{\mathcal{E}}^2 = \|\UU(0)\|_{\mathcal{E}}^2 =
\norm{\VV_0}^2_{0} + |\UU_0|_{\bm V}^2 + \mathcal{D}(\UU_0,\UU_0)$.
As a result of Lemma \ref{lemma:stab} below,  $\max_{0\leq t \leq T }\norm{\cdot}_{\mathcal{E}}$ is a norm on $C^1([0,T];\bm V_0)\cap C^0([0,T];\bm V)$.

\begin{lemma}\label{lemma:stab}
The bilinear forms $\mathcal{M}$, $\mathcal{D}$, and $\mathcal{A}$ defined in \eqref{eq:bilinear_forms} are such that
\begin{align}
 \mathcal{M}(\UU,\VV) & \lesssim \norm{\UU}_{0} \norm{\VV}_{0} &  \forall \UU,\VV \in \bm V_0, \label{eq:M-cont}\\ 
\mathcal{M}(\UU,\UU) & \gtrsim \norm{\UU}_{0}^2 &  \forall \UU \in \bm V_0, \label{eq:M-coer} \\
 \mathcal{A}(\UU,\VV)  & \lesssim 
 |\UU |_{\bm V} |\VV|_{\bm V} &  \forall \UU,\VV \in \bm V, \label{eq:A-cont}\\
\mathcal{A}(\UU,\UU) &  = |\UU |_{\bm V}^2   
 & \forall \UU \in \bm V  \label{eq:A-coer}.
\end{align}
\end{lemma}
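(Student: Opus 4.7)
The plan is to verify the four inequalities separately, exploiting the fact that both $\mathcal{M}$ and $\mathcal{A}$ decouple into contributions supported on $\Omega_e$ and $\Omega_p$, and that the only nontrivial comparison is between two positive-semidefinite quadratic forms in $(\bm u_p,\bm u_f)$ pointwise in $\Omega_p$.

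The identity \eqref{eq:A-coer} is the most direct. From the constitutive law $\bm\sigma_e(\bm w)=\mathbb{C}:\bm\epsilon(\bm w)$ and the symmetry/positivity of $\mathbb{C}$, one has $(\bm\sigma_e(\bm w),\bm\epsilon(\bm w))_{\Omega_i}=\norm{\mathbb{C}^{1/2}:\bm\epsilon(\bm w)}_{\Omega_i}^2$ for $i\in\{e,p\}$, and the remaining summands of $\mathcal{A}(\UU,\UU)$ match the reaction and divergence terms in $|\UU|_{\bm V}^2$. Continuity \eqref{eq:A-cont} then follows from Cauchy--Schwarz applied to each of the five pairs in the definition of $\mathcal{A}$, using $(\mathbb{C}:\bm\epsilon(\bm u),\bm\epsilon(\bm v))\le\norm{\mathbb{C}^{1/2}:\bm\epsilon(\bm u)}\norm{\mathbb{C}^{1/2}:\bm\epsilon(\bm v)}$ in the stress contributions.

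The heart of the argument is the analysis of $\mathcal M$: the quadratic forms $\norm{\UU}_0^2$ and $\mathcal M(\UU,\UU)$ share the cross-coefficient $2\rho_f$ but have different diagonal entries. Expanding the third summand of $\norm{\cdot}_0^2$ gives
\begin{equation*}
\norm{\UU}_0^2=\norm{\rho_e^{1/2}\bm u_e}_{\Omega_e}^2+\int_{\Omega_p}\bigl[(\rho_u+\rho_f\phi)|\bm u_p|^2+2\rho_f\,\bm u_p\cdot\bm u_f+\rho_f\phi^{-1}|\bm u_f|^2\bigr].
\end{equation*}
Using the relations $\rho_p=2\rho_u+\rho_f\phi$ and $\rho_w=a\rho_f/\phi$, the pointwise difference simplifies to
\begin{equation*}
\mathcal M(\UU,\UU)-\norm{\UU}_0^2=\int_{\Omega_p}\bigl[\rho_u|\bm u_p|^2+(a-1)\rho_f\phi^{-1}|\bm u_f|^2\bigr]\ge 0,
\end{equation*}
which is nonnegative by the tortuosity assumption $a>1$ and the bounds $\phi\in(\phi_0,\phi_1)\subset(0,1)$, yielding \eqref{eq:M-coer}. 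The matching upper bound $\mathcal M(\UU,\UU)\lesssim\norm{\UU}_0^2$ follows from the same identity after inserting the uniform upper bounds on $\rho_s$, $\rho_f$, $a$ and $\phi^{-1}$.

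Continuity \eqref{eq:M-cont} is then a consequence of the bilinearity and symmetry of $\mathcal M$ via $\mathcal M(\UU,\VV)\le\mathcal M(\UU,\UU)^{1/2}\mathcal M(\VV,\VV)^{1/2}$ combined with the upper bound just derived. I expect the main obstacle to be the mismatch between the cross-terms in $\norm{\cdot}_0^2$ and $\mathcal M$; this is precisely what motivates the definition $\rho_u=(1-\phi)\rho_s/2$ and the hypothesis $a>1$, since once these are in place the comparison reduces to the pointwise statement that the difference of the two underlying $2\times 2$ mass matrices is diagonal and positive semidefinite.
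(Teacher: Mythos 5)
Your argument is correct and takes essentially the same route as the paper's proof, which performs exactly these steps only more tersely: Cauchy--Schwarz and triangle inequalities for \eqref{eq:M-cont} and \eqref{eq:A-cont}, the identity $\mathcal{A}(\UU,\UU)=|\UU|_{\bm V}^2$ by inspection of \eqref{eq:bilinear_forms}, and for \eqref{eq:M-coer} the pointwise comparison of the two quadratic forms in $(\bm u_p,\bm u_f)$ using $\rho_p=2\rho_u+\phi\rho_f$, $\rho_w=a\rho_f/\phi$, and $a>1$, which is precisely your observation that the difference of the two $2\times2$ mass matrices is diagonal and positive semidefinite. The only step you should spell out is the upper bound $\mathcal{M}(\UU,\UU)\lesssim\norm{\UU}_0^2$ that you invoke to get \eqref{eq:M-cont} via the Cauchy--Schwarz inequality for the positive form $\mathcal{M}$: since $\norm{\UU}_0^2$ contains no standalone $|\bm u_f|^2$ summand, bounding the term $(a-1)\rho_f\phi^{-1}|\bm u_f|^2$ requires writing $\bm u_f=\phi\bigl((\bm u_p+\phi^{-1}\bm u_f)-\bm u_p\bigr)$ and applying the triangle inequality before inserting the coefficient bounds (alternatively, prove \eqref{eq:M-cont} directly from the definition of $\mathcal{M}$ as the paper does).
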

\begin{proof}.
Inequalities \eqref{eq:M-cont} and \eqref{eq:A-cont} are readily inferred by applying the Cauchy--Schwarz and triangle inequalities.
For \eqref{eq:M-coer} we use the definition of the density functions $\rho_e$, $\rho_p$, $\rho_u$, and $\rho_w$, we observe that  $a >1$ to prove that $\mathcal{M}(\UU,\UU) \gtrsim \norm{\UU}_0^2$, cf. \cite{ABM_Vietnam}.
The last equality \eqref{eq:A-coer} follows by the definition \eqref{eq:bilinear_forms}.
\end{proof}
\begin{theorem}[Stability of the continuous weak formulation]\label{thm:stability}
Assume that the problem data satisfy $\mathsf{F} = (\bm f_e, \bm f_p, \bm g_p)\in L^2((0,T);\bm V_0)$, $\UU_0 \in\bm V$, and $\VV_0 \in \bm V_0$. For any $t\in(0,T]$, let $\UU=(\bm u_e,\bm u_p, \bm u_f) \in \bm V$ be the solution of \eqref{eq::weakform}. Then, it holds
\begin{align*}
 \max_{0\leq t \leq T}\norm{\UU(t)}^2_{\mathcal{E}}\lesssim \norm{\UU_0}^2_{\mathcal{E}}
+ \int_0^T & \Big(\norm{(2\rho_e\zeta)^{-1/2}\bm f_e}_{\Omega_e}^2  +   \norm{(2\rho_e\zeta)^{-1/2}\bm  f_p}_{\Omega_p}^2  \\ &
    +  \norm{(\eta/k)^{-1/2} \bm g_p}_{\Omega_p}^2\Big) ds,
\end{align*}
with hidden constant depending on the observation time $t$ and the material properties.
\end{theorem}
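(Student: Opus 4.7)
The plan is to apply the classical energy argument to \eqref{eq::weakform}: test with $\VV=\dot{\UU}$, exploit the symmetry of $\mathcal{M}$ and $\mathcal{A}$ to generate a time derivative, integrate in $t$, and close the estimate by Young's inequality calibrated to the dissipation $\mathcal{D}$.

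First I would take $\VV=\dot{\UU}$ in \eqref{eq::weakform}. This is admissible because $\UU\in C^1((0,T];\bm V)$ and the essential interface constraint $\bm u_e=\bm u_p$ on $\Gamma_I$, differentiated in $t$, gives $\dot{\bm u}_e=\dot{\bm u}_p$ on $\Gamma_I$. The crucial observation is $\mathcal{C}(\UU,\dot{\UU})=0$: the first duality pairing in \eqref{eq:bilinear_forms} vanishes since $\dot{\bm u}_p-\dot{\bm u}_e\equiv 0$ on $\Gamma_I$, and the second vanishes since the solution satisfies the natural transmission condition \eqref{eq:noflowrate}, which upon time differentiation reads $((1-\delta)\beta\dot{\bm u}_p+\dot{\bm u}_f)\cdot\bm n_p=0$ on $\Gamma_I$.

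Using the symmetry of $\mathcal{M}$ and $\mathcal{A}$ readable from \eqref{eq:bilinear_forms}, one rewrites $\mathcal{M}(\ddot{\UU},\dot{\UU})=\tfrac{1}{2}\tfrac{d}{dt}\mathcal{M}(\dot{\UU},\dot{\UU})$ and $\mathcal{A}(\UU,\dot{\UU})=\tfrac{1}{2}\tfrac{d}{dt}\mathcal{A}(\UU,\UU)$. Integrating from $0$ to $t$ and invoking \eqref{eq:M-coer} and \eqref{eq:A-coer} yields
$$\|\dot{\UU}(t)\|_0^2+|\UU(t)|_{\bm V}^2+\int_0^t\mathcal{D}(\dot{\UU},\dot{\UU})\,ds\lesssim\|\VV_0\|_0^2+|\UU_0|_{\bm V}^2+\int_0^t\mathcal{F}(\dot{\UU})\,ds.$$
The forcing on the right is controlled contribution-by-contribution by weighted Cauchy--Schwarz/Young inequalities matched to $\mathcal{D}$, e.g.\
$$(\bm f_e,\dot{\bm u}_e)_{\Omega_e}\le\tfrac{1}{2}\|(2\rho_e\zeta)^{-1/2}\bm f_e\|_{\Omega_e}^2+\tfrac{1}{2}(2\rho_e\zeta\,\dot{\bm u}_e,\dot{\bm u}_e)_{\Omega_e},$$
and analogously for $\bm f_p$ with weight $(2\rho_p\zeta)^{-1/2}$ and for $\bm g_p$ with weight $(\eta/k)^{-1/2}$. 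After integration in time, the velocity-quadratic residuals sum to $\tfrac{1}{2}\int_0^t\mathcal{D}(\dot{\UU},\dot{\UU})\,ds$ and are absorbed into the left-hand side. Adding the constant $\mathcal{D}(\UU_0,\UU_0)$ to both sides to reconstitute $\|\UU(t)\|_{\mathcal{E}}^2$ on the left and $\|\UU_0\|_{\mathcal{E}}^2$ on the right, and taking the supremum over $t\in[0,T]$, produces the claimed estimate (no Gronwall step is required thanks to the absorption into $\mathcal{D}$).

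The main obstacle will be to justify rigorously $\mathcal{C}(\UU,\dot{\UU})=0$, specifically the vanishing of the second duality pairing. This requires the natural no-flow condition \eqref{eq:noflowrate} to hold in the $H^{-1/2}(\Gamma_I)$ sense for the solution of \eqref{eq::weakform} and to commute with time differentiation. For the exact solution this is built into the derivation of the weak form, but rigorously handling the trace of $\dot{\bm u}_f\cdot\bm n_p$ on $\Gamma_I$ for a general $\UU\in C^1([0,T];\bm V_0)\cap C^0([0,T];\bm V)$ may call for a density or additional regularity argument.
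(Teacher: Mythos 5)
Your proposal is correct and follows essentially the same route as the paper: test with $\dot{\UU}$ so that $\mathcal{C}(\UU,\dot{\UU})=0$ by the interface conditions, use symmetry of $\mathcal{M}$ and $\mathcal{A}$ together with \eqref{eq:M-coer} and \eqref{eq:A-coer} after integrating in time, add $\mathcal{D}(\UU_0,\UU_0)\ge 0$, and absorb the forcing via Cauchy--Schwarz and Young inequalities weighted by the dissipation. Your remark that the vanishing of the second duality pairing rests on the (time-differentiated) no-flow condition \eqref{eq:noflowrate}, and not only on \eqref{eq::cont_disp}, is in fact slightly more careful than the paper's own justification.
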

\begin{proof}.
The proof follows the lines of the one proposed in \cite[Proposition 5.1]{ABM_Vietnam}. We take $\dot{\UU} = (\dot{\bm u}_e,\dot{\bm u}_p, \dot{\bm u}_f)$ as test functions in \eqref{eq::weakform} so that the interface terms are null, i.e.,  $\mathcal{C}(\UU,\dot{\UU}) = 0$, thanks to condition \eqref{eq::cont_disp}. We integrate in time between $0$ and $t$, add on both side $\mathcal{D}(\UU_0,\UU_0)\ge 0$, and  use \eqref{eq:M-cont}--\eqref{eq:A-coer} to get 
\begin{equation*}
\norm{\UU(t)}^2_{\mathcal{E}} 
+ \int_0^t \mathcal{D}(\dot{\UU},\dot{\UU})(s) \,ds 
\lesssim \norm{\UU_0}^2_{\mathcal{E}} 
+ 2\int_0^t \mathcal{F}(\dot{\UU})(s) ds.    
\end{equation*}
The thesis follows by applying the Cauchy-Schwarz and Young inequalities to bound the second term on the right-hand side. 
\end{proof}

\section{Semi-discrete formulation and stability analysis}\label{sec::numerical}

We introduce a \textit{polytopic} mesh $\mathcal{T}_h$ made of general polygons (in 2d) or polyhedra (in 3d) and write $\mathcal{T}_h$ as $\mathcal{T}_h=\mathcal{T}^e_h \cup\mathcal{T}^p_h$, where $\mathcal{T}^{\#}_h=\{\kappa\in\mathcal{T}_h:\overline{\kappa}\subseteq\overline{\Omega}_{\#}\}$, with $\#=\{e,p\}$.
Implicit in this decomposition there is the assumption that the meshes $\mathcal{T}_h^e$  and $\mathcal{T}_h^p$ are aligned with $\Omega_e$ and $\Omega_p$, respectively. 
Polynomial degrees $p_{e,\kappa}\geq1$ and  $p_{p,\kappa}\geq1$ are associated with each element of $\mathcal{T}_h^e$ and $\mathcal{T}_h^p$. 
The discrete spaces are introduced as follows: $\bm{V}_h^e=[\mathcal{P}_{p_e}(\mathcal{T}_h^e)]^d$ and $\bm{V}_h^p=[\mathcal{P}_{p_p}(\mathcal{T}_h^p)]^d$ where $\mathcal{P}_{r}(\mathcal{T}^{\#}_h)$ is the space of piecewise polynomials in $\Omega_{\#}$ of total degree less than or equal to $r_\kappa$ in any $\kappa\in\mathcal{T}_h^{\#}$ with $\#=\{e,p\}$.

In the following, we assume that $\mathbb{C}$ and $m$ are element-wise constant and we define 
$\overline{\mathbb{C}}_\kappa=(|\mathbb{C}^{1/2}|_2^2)_{|\kappa}$ for all $\kappa\in\mathcal{T}_h^e \cup \mathcal{T}_h^p$, and $\overline{m}_{\kappa}=( m)_{|\kappa}$ for all $\kappa\in\mathcal{T}_h^p$.
The symbol $|\cdot|_2$ stands for \MB{the $\ell^2$-norm on $\mathbb{R}^{n\times n}$, with $n=3$ if $d=2$ and $n=6$ if $d=3$.}
In order to deal with polygonal and polyhedral elements, we define an \textit{interface}  as the intersection of the $(d-1)$-dimensional faces of  
any two neighboring elements of $\mathcal{T}_h$. If $d=2$, an interface/face is a line segment and the set of all interfaces/faces is  denoted by $\mathcal{F}_h$.
When $d=3$, an interface can be a general polygon that we assume could be further decomposed into a set of planar triangles collected in the set  $\mathcal{F}_h$.  
We decompose $\mathcal{F}_h$ as $\mathcal{F}_h=\mathcal{F}_{h}^I \cup \mathcal{F}_h^e \cup  \mathcal{F}_h^p$, where
$ \mathcal{F}_{h}^I = \{F\in\mathcal{F}_h:F\subset\partial \kappa^e\cap\partial \kappa^p,\kappa^e\in\mathcal{T}_{h}^e,\kappa^p\in\mathcal{T}_{h}^p\},$  and 
$\mathcal{F}_h^e$, and $\mathcal{F}_h^p$ denote all the faces of $\mathcal{T}_h^e$, and $\mathcal{T}_h^p$ respectively, not laying on $\Gamma_I$.
Finally, the faces of $\mathcal{T}_h^e$ and $\mathcal{T}_h^p$ can be further written as the union of \textit{internal} ($i$) and \textit{boundary} ($b$) faces, respectively, i.e.:
$\mathcal{F}^e_h=\mathcal{F}^{e,i}_h\cup\mathcal{F}^{e,0}_h$ and $\mathcal{F}^p_h=\mathcal{F}^{p,i}_h\cup\mathcal{F}^{p,0}_h$.

Following \cite{CangianiDongGeorgoulisHouston_2017}, we next introduce the main assumption on $\mathcal{T}_h$. 
\begin{definition}\label{def::polytopic_regular}
A mesh $\mathcal{T}_h$ is said to be \textit{polytopic-regular} if for any $ \kappa \in \mathcal{T}_h$, there exists a set of non-overlapping $d$-dimensional simplices contained in $\kappa$, denoted by $\{S_\kappa^F\}_{F\subset{\partial \kappa}}$, such that for any face $F\subset\partial \kappa$, it holds $h_\kappa\lesssim d|S_\kappa^F| \, |F|^{-1}$.
\end{definition}
\begin{assumption}
The sequence of meshes $\{\mathcal{T}_h\}_h$ is assumed to be \textit{uniformly} polytopic regular in the sense of  \eqref{def::polytopic_regular}.
\label{ass::regular}
\end{assumption}
\noindent
As pointed out in \cite{CangianiDongGeorgoulisHouston_2017}, this assumption does not impose any restriction on either the number of faces per element or their measure relative to the diameter of the element they belong to.
Under \eqref{ass::regular}, the following \textit{trace-inverse inequality} holds:
\begin{align}
& ||v||_{L^2(\partial \kappa)}\lesssim ph_\kappa^{-1/2}||v||_{L^2(\kappa)}
&& \forall \ \kappa\in\mathcal{T}_h \ \forall v \in \mathcal{P}_p(\kappa).
\label{eq::traceinv}
\end{align}
In order to avoid technicalities, we also assume that $\mathcal{T}_h$ satisfies a \textit{hp-local bounded variation} property:
\begin{assumption}
For any pair of neighboring elements $\kappa^\pm\in\mathcal{T}_h^{\#}$, it holds $h_{\kappa^+}\lesssim h_{\kappa^-}\lesssim h_{\kappa^+},\ \ p_{\#,\kappa^+}\lesssim p_{\#,\kappa^-}\lesssim p_{\#,\kappa^+}$, with $\#=\{e,p\}$.
\label{ass::3}
\end{assumption}

Finally, following  \cite{Arnoldbrezzicockburnmarini2002}, for sufficiently piecewise smooth scalar-, vector- and tensor-valued fields $\psi$, $\bm{v}$ and $\bm{\tau}$, respectively, we define the averages and jumps on each \textit{interior} face $F\in\mathcal{F}_h^{e,i}\cup\mathcal{F}_h^{p,i}\cup  \mathcal{F}_h^{I}$ shared by the elements $\kappa^{\pm}\in \mathcal{T}_h$ as follows:
\begin{align*}
 \llbracket\psi\rrbracket  &=  \psi^+\bm{n}^++\psi^-\bm{n}^-, 
 &&\llbracket\bm{v}\rrbracket  = \bm{v}^+\otimes\bm{n}^++\bm{v}^-\otimes\bm{n}^-, 
 &&\MB{\llbracket\bm{v}\rrbracket_{\bm n}  = \bm{v}^+\cdot\bm{n}^++\bm{v}^-\cdot\bm{n}^-}, \\
 \llbrace\psi \rrbrace &= \frac{\psi^++\psi^-}{2}, 
&&\llbrace\bm{v}\rrbrace   = \frac{\bm{v}^++\bm{v}^-}{2}, 
&&\llbrace\bm{\tau}\rrbrace  = \frac{\bm{\tau}^++\bm{\tau}^-}{2}, 
\end{align*}
where $\otimes$ is the tensor product in $\mathbb{R}^3$, $\cdot^{\pm}$ denotes the trace on $F$ taken within $\kappa^\pm$, and $\bm{n}^\pm$ is the outer normal vector to $\partial \kappa^\pm$. Accordingly, on \textit{boundary} faces $F\in\mathcal{F}_h^{e,0}\cup\mathcal{F}_h^{p,0}$, we set
$
\llbracket\psi\rrbracket = \psi\bm{n},\
\llbrace\psi \rrbrace = \psi,\
\llbracket\bm{v}\rrbracket= \bm{v}\otimes\bm{n},\
\MB{\llbracket\bm{v}\rrbracket_{\bm n} =\bm{v}\cdot\bm{n}},\
\llbrace\bm{v}\rrbrace= \bm{v},\
\llbrace\bm{\tau}\rrbrace= \bm{\tau}.$

For later use, we also define $\nabla_h$ and $\nabla_h \cdot$ to be the broken gradient and divergence operators, respectively, 
set $\bm{\epsilon}_h(\bm{v})=\frac{\nabla_h \bm{v} + \nabla_h \bm{v}^T}{2}$, $\bm{\sigma}_{eh}(\bm{v})=\mathbb{C}:\bm{\epsilon}_h(\bm{v})$, and use the short-hand notation  
$(\cdot,\cdot)_{\Omega_{\#}}=\sum_{\kappa\in\mathcal{T}_h^{\#}} \int_\kappa\cdot$ and $\langle\cdot,\cdot\rangle_{\mathcal{F}_h^{\#}}=\sum_{F\in\mathcal{F}_h^{\#}}\int_F\cdot$ for $\# = \{e,p\}$.

\subsection{Semi-discrete PolydG formulation}\label{sec:PolydG_form}
We define the space $\bm V_h = \bm V_h^e \times \bm V_h^p \times \bm V_h^p$  and denote by $\UU_h = (\bm u_e,\bm u_p,\bm u_f)_h$ a generic function  in $\bm V_h$. The semi-discrete PolydG formulation of problem \eqref{eq::weakform} reads as: find $\UU_h \in \bm V_h$ such that 
\begin{equation}
\mathcal{M}(\ddot{\UU}_h,\VV_h) +
\mathcal{D} (\dot{\UU}_h,\VV_h) +
\mathcal{A}_h(\UU_h,\VV_h) +
\mathcal{C}_{h}(\UU_h,\VV_h) = 
\mathcal{F}(\VV_h) \quad \forall \VV_h  \in \bm V_h.
\label{eq::dgsystem_0}
\end{equation}
 As initial conditions we take suitable projections onto $\bm V_h$ of the initial data, namely $\UU_h(0) = \UU_{0h}$  and $\dot{\UU}_h(0) = \VV_{0h}$. 
The bilinear forms $\mathcal{M}$ and $\mathcal{D}$ appearing in \eqref{eq::dgsystem_0} are defined as in \eqref{eq:bilinear_forms} while 
\begin{equation}
\mathcal{A}_h(\UU,\VV)   = \mathcal{A}_h^e(\bm{u}_e,\bm{v}_e) + \mathcal{A}_h^p(\bm{u}_p,\bm{v}_p) +
\mathcal{B}_h^p(\beta\bm{u}_p+\bm u_f,\beta\bm{v}_p+\bm{v}_f)      \label{eq:bilinear_Ah}
\end{equation}
for all $\UU = (\bm{u}_e,\bm{u}_p,\bm{u}_f)$ and $\VV = (\bm v_e, \bm v_p, \bm{v}_f) \in \bm{V}_h$ 
with 
\begin{equation}\label{eq::DGbilinearforms}
\begin{aligned}
\mathcal{A}_h^\star(\bm{u}_\star,\bm{v}_\star)
& = (\bm{\sigma}_{eh}(\bm{u}_\star),\bm{\epsilon}_h(\bm{v}_\star))_{\Omega_\star} + (\rho_\star \zeta^2 \bm u_\star, \bm v_\star)_{\Omega_\star}  - \langle\llbrace\bm{\sigma}_{eh}(\bm{u}_\star)\rrbrace,\llbracket\bm{v}_\star\rrbracket
\rangle_{\mathcal{F}_{h}^\star} 
  - \langle\llbracket\bm{u}_\star\rrbracket, \llbrace\bm{\sigma}_{eh}(\bm{v}_\star)\rrbrace \rangle_{\mathcal{F}_{h}^\star} \\ & \quad + 
\langle\alpha\llbracket\bm{u}_\star\rrbracket,\llbracket\bm{v}_\star\rrbracket\rangle_{\mathcal{F}_{h}^\star}, \quad \star = \{e,p\}, \\
\mathcal{B}_h^p(\bm{u},\bm{v})&=
( m\nabla_h\cdot\bm{u},\nabla_h\cdot\bm{v})_{{\Omega}}
-\langle\llbrace m(\nabla_h\cdot\bm{u})\rrbrace, \llbracket\bm{v}\rrbracket_{\bm n}\rangle_{\mathcal{F}_{h}^p}  -
\langle\llbracket\bm{u}\rrbracket_{\bm n},\llbrace m(\nabla_h\cdot\bm{v})\rrbrace\rangle_{\mathcal{F}_{h}^p}  + \langle \gamma \llbracket\bm{u}\rrbracket_{\bm n}, \llbracket\bm{v}\rrbracket_{\bm n}\rangle_{\mathcal{F}_{h}^p},
\end{aligned}
\end{equation}
cf also \cite{AntoniettiMazzieriNatipoltri2021}. 
Here, the stabilization functions $\alpha\in L^\infty(\mathcal{F}_h^\star)$, for $\star = \{p,e\}$ and $\gamma\in L^\infty(\mathcal{F}_h^p)$ are defined s.t.
\begin{align}
&\alpha|_F=
\begin{cases}
c_1 \max\limits_{\kappa\in\{\kappa^+,\kappa^-\}}\left(\overline{\mathbb{C}}_\kappa\ p_{\star,\kappa}^2{h_\kappa^{-1}}\right) \hspace{7,5mm}&\forall F\in\mathcal{F}_h^{\star,i}\cup \mathcal{F}_h^{I},\hspace{0,5mm}F\subseteq\partial \kappa^+\cap\partial \kappa^-, \vspace{0.1cm}\label{eq::stab_1}\\
\overline{\mathbb{C}}_\kappa \ p_{\star,\kappa}^2{h_\kappa^{-1}}&\forall F\in\mathcal{F}_h^{\star,b},\hspace{8,5mm}F\subseteq\partial \kappa,
\end{cases}\\ \nonumber\\
&
\gamma|_F=
\begin{cases}
c_2 \max\limits_{\kappa\in\{\kappa^+,\kappa^-\}}\left(\overline{ m}_\kappa \ p_{p,\kappa}^2{h_\kappa^{-1}}\right) \hspace{7,5mm}&\forall F\in\mathcal{F}_h^{p,i},\hspace{8,5mm}F\subseteq\partial \kappa^+\cap\partial \kappa^-, \vspace{0.1cm}\label{eq::stab_2}\\
\overline{ m}_\kappa \ p_{p,\kappa}^2{h_\kappa^{-1}}&\forall F\in\MB{\mathcal{F}_h^{p,b}\cup\mathcal{F}_h^{I}},\hspace{0,5mm} F\subseteq\partial \kappa, \kappa\in\mathcal{T}_h^p,
\end{cases}
\end{align}
with positive constants $c_1$ and $c_2$ that have to be properly chosen.
The definition of the penalty functions \eqref{eq::stab_1}--\eqref{eq::stab_2} is based on \cite[Lemma 35]{CangianiDongGeorgoulisHouston_2017}.
Alternative stabilization functions can be defined in the spirit of \cite{M2AN_2013__47_3_903_0}. 
The analysis of the latter is however beyond the scope of this work. 
The bilinear form $\mathcal{C}_{h}(\cdot,\cdot)$ is responsible for  the coupling between the elastic and the  poroelastic domain and is defined as the sum of five contributions, namely
\begin{multline*}
 \mathcal{C}_{h}(\UU,\VV)  =  \mathcal{A}^{e}_{\Gamma_I}( \bm u_{e},\bm v_{e})  + \mathcal{A}^{p}_{\Gamma_I}( \bm u_{p},\bm v_{p})  + \mathcal{B}^{pp}_{\Gamma_I}( \bm{u}_{p},\bm v_{p})  
+ \mathcal{B}^{pf}_{\Gamma_I}( \bm{u}_{p},\bm{v}_{f})  \\
+ \mathcal{B}^{fp}_{\Gamma_I}( \bm u_{f},\bm v_{p}) 
+ \mathcal{B}^{ff}_{\Gamma_I}(\bm u_{f}, \bm{v}_{f})
+ \mathcal{C}^{ep}_{\Gamma_I}(\bm u_{e},\bm v_{p}) +\mathcal{C}^{pe}_{\Gamma_I}( \bm u_{p},\bm v_{e}),
\end{multline*}
for any $\UU,\VV \in \bm V_h$, where
\begin{align}
    \mathcal{A}^{e}_{\Gamma_I}(\bm u_e,\bm v_e)  = & \langle  \bm \sigma_{eh} (\bm u_e)  \bm n_p,  \bm v_e \rangle_{\mathcal{F}_h^{I}} + \langle  \bm \sigma_{eh} (\bm v_e)  \bm n_p,  \bm u_e \rangle_{\mathcal{F}_h^{I}}  +  \langle \alpha \bm u_e, \bm v_e \rangle_{\mathcal{F}_h^{I}}, \label{def::bilinear_gammai} \\
    \mathcal{A}^{p}_{\Gamma_I}(\bm u_p,\bm v_p)  = &  \langle \alpha \bm u_p, \bm v_p \rangle_{\mathcal{F}_h^{I}}, \nonumber \\
\mathcal{B}^{pp}_{\Gamma_I}(\bm u_p,\bm v_p) = 
& - \langle  m \beta \nabla_h \cdot \bm u_p,  (1-\delta) \beta \bm v_p \cdot {\bm n}_p \rangle_{\mathcal{F}_h^{I}} - \langle  m \beta \nabla_h \cdot \bm v_p,  (1-\delta) \beta \bm u_p \cdot {\bm n}_p \rangle_{\mathcal{F}_h^{I}}
\nonumber \\ & \qquad + \langle \gamma  (1-\delta) \beta \bm u_p \cdot {\bm n}_p, (1-\delta) \beta \bm v_p \cdot {\bm n}_p \rangle_{\mathcal{F}_h^{I}}, \nonumber \\
\mathcal{B}^{pf}_{\Gamma_I}(\bm u_p,\bm v_f) = & 
- \langle  m \beta \nabla_h \cdot \bm u_p,   \bm v_f \cdot {\bm n}_p \rangle_{\mathcal{F}_h^{I}} - \langle  m \nabla_h \cdot \bm v_f,  (1-\delta) \beta \bm u_p \cdot {\bm n}_p \rangle_{\mathcal{F}_h^{I}}  
 + \langle \gamma (1-\delta)\beta \bm u_p \cdot {\bm n}_p, \bm v_f \cdot {\bm n}_p \rangle_{\mathcal{F}_h^{I}}, \nonumber \\
\mathcal{B}^{fp}_{\Gamma_I}(\bm u_f,\bm v_p) = & 
- \langle  m  \nabla_h \cdot \bm u_f,  (1-\delta)\beta \bm v_p \cdot {\bm n}_p \rangle_{\mathcal{F}_h^{I}} - \langle  m\beta \nabla_h \cdot \bm v_p,  \bm u_f \cdot {\bm n}_p \rangle_{\mathcal{F}_h^{I}}   
+ \langle \gamma  \bm u_f \cdot {\bm n}_p, (1-\delta)\beta \bm v_p \cdot {\bm n}_p \rangle_{\mathcal{F}_h^{I}}, \nonumber \\
\mathcal{B}^{ff}_{\Gamma_I}(\bm u_f,\bm v_f) = & - \langle  m  \nabla_h \cdot \bm u_f,  \bm v_f \cdot {\bm n}_p \rangle_{\mathcal{F}_h^{I}} - \langle  m \nabla_h \cdot \bm v_f,  \bm u_f \cdot {\bm n}_p \rangle_{\mathcal{F}_h^{I}} + \langle \gamma  \bm u_f \cdot {\bm n}_p, \bm v_f \cdot {\bm n}_p \rangle_{\mathcal{F}_h^{I}}, \nonumber \\
  \mathcal{C}^{ep}_{\Gamma_I}(\bm u_e,\bm v_p)  = &   - \langle  \bm \sigma_{eh} (\bm u_{e})  \bm n_p,  \bm v_{p}  \rangle_{\mathcal{F}_h^{I}} -\langle \alpha \bm u_{e}, \bm v_{p} \rangle_{\mathcal{F}_h^{I}}, \nonumber \\
 \mathcal{C}^{pe}_{\Gamma_I}(\bm u_p,\bm v_e)  = & - \langle   \bm  \sigma_{eh} (\bm v_{e})  \bm n_p,  \bm u_{p} \rangle_{\mathcal{F}_h^{I}} -\langle \alpha \bm u_{p}, \bm v_{e} \rangle_{\mathcal{F}_h^{I}}.\nonumber 
\end{align}
The derivation of the coupling bilinear form $\mathcal{C}_h(\cdot,\cdot)$, starting from the strong formulation of the poroelastic-elastic problem, is detailed in the Appendix. 

\begin{remark}
Notice that the coupling conditions \eqref{eq::contstress_elporo} and \eqref{eq::cont_disp} are imposed (weakly) through the bilinear forms $\mathcal{A}^{e}_{\Gamma_I}(\cdot,\cdot),\mathcal{A}^{p}_{\Gamma_I}(\cdot,\cdot),\mathcal{C}^{ep}_{\Gamma_I}(\cdot,\cdot)$ and $\mathcal{C}^{pe}_{\Gamma_I}(\cdot,\cdot)$, while condition \eqref{eq:noflowrate} is included in the bilinear forms $\mathcal{B}^{pp}_{\Gamma_I}(\cdot,\cdot)$, $\mathcal{B}^{pf}_{\Gamma_I}(\cdot,\cdot)$, $\mathcal{B}^{fp}_{\Gamma_I}(\cdot,\cdot)$, and $\mathcal{B}^{ff}_{\Gamma_I}(\cdot,\cdot)$. The latter couples at the interface  the filtration displacement in $\Omega_p$ to the elastic displacement in $\Omega_e$, through the elastic displacement in $\Omega_p$.
\end{remark}

By fixing a basis for the space $\bm V_h$ and denoting by $\bm{\UU}_h =(\bm \UU_e, \bm \UU_p, \bm \UU_f)^T \in \mathbb{R}^{\ndof}$ the vector of the $\ndof$ expansion coefficients in the chosen basis of the unknown $\UU_h$, the semi-discrete formulation \eqref{eq::dgsystem_0} can be written equivalently as:
\begin{multline}\label{eq::algebraic}
\left[ \begin{matrix} 
M^e_{\rho_e} & 0 & 0 \\
0 & M_\rho^p & M_{\rho_f}^p  \\
0 & M_{\rho_f}^p & M_{\rho_w}^p  \\
\end{matrix} \right]  
\left[ \begin{matrix} 
\ddot{\bm \UU}_e \\
\ddot{\bm \UU}_p \\
\ddot{\bm \UU}_f 
\end{matrix} \right] + 
\left[ \begin{matrix} 
 D^e_{\rho_e} & 0 & 0 \\
0 &  D^p_{\rho} & 0 \\
0 & 0 & D^f
\end{matrix} \right]  
\left[ \begin{matrix} 
\dot{\bm \UU}_e \\
\dot{\bm \UU}_p \\
\dot{\bm \UU}_f 
\end{matrix} \right] \\
+\left[ \MB{\begin{matrix} 
 A^e_h +{A}_{\Gamma_I}^e &  C_{\Gamma_I}^{pe} & 0 \\
 C_{\Gamma_I}^{ep} & {A}^p + {A}_{\Gamma_I}^p  +  B_{\beta^2}^p  + B_{\Gamma_I}^{pp}&  {B}_\beta^p + B_{\Gamma_I}^{fp} \\
0 & {B}_\beta^p + B_{\Gamma_I}^{pf} & {B}^p + B_{\Gamma_I}^{ff} 
\end{matrix}} \right] 
\left[ \begin{matrix} 
{\bm \UU}_e \\
{\bm \UU}_p \\
{\bm \UU}_f 
\end{matrix} \right] = 
\left[ \begin{matrix} 
\bm {\FF}^e \\
\bm {\FF}^p \\ 
\bm {\GG}^p
\end{matrix} \right] 
\end{multline}
with initial conditions $ \bm{\UU}_{0h}$ and $\bm{\VV}_{0h}$.
We remark that $\bm{\FF}^e$, $\bm{\FF}^p$ and $\bm{\GG}^p$ are the vector representations of the linear functional 
$\mathcal F$. 
To ease the notation, we set $\bm{\FF}_h = [ \bm {\FF}^e, \bm {\FF}^p,  \bm {\GG}^p]^T$ and we rewrite system  \eqref{eq::algebraic} in compact form as
\begin{equation}\label{eq::algebraic_compact}
{M}\ddot{\bm{\UU}}_h(t)+{D}\dot{\bm{\UU}}_h(t)+ (A+B+C)\bm{\UU}_h(t)=\bm{\FF}_h(t)
\quad\forall t\; \in (0,T].
\end{equation}

\subsection{Stability analysis}
To carry out the stability analysis of the semi-discrete problem, we introduce the energy norm
\begin{equation}
\label{eq::norm_def1}
\norm{\UU(t)}_{\rm E}^2=
\norm{\dot{\UU}(t)}_{0}^2 +
|\UU(t)|_{\rm dG}^2 + |\UU(t)|^2_{\Gamma_I} + \int_0^t \mathcal{D}(\dot{\UU},\dot{\UU})(s) \,ds + \mathcal{D}(\UU,\UU)(0),
\end{equation}
for all  $\UU = (\bm u_e,\bm u_p,\bm u_f) \in C^1([0,T];\bm V_h)$, where
\begin{align*}
|\UU|_{\rm dG}^2 & = 
\norm{\bm u_e(t)}_{\rm dG,e}^2 + \norm{\bm u_p(t)}_{\rm dG,p}^2 + |(\beta\bm u_p+\bm u_f)(t)|_{\rm dG,p}^2, \\
\norm{\bm{u}_\star}_{\rm dG,\star}^2 & =\norm{\mathbb{C}^{1/2}:\bm{\epsilon}_h(\bm{u}_\star)}_{\Omega_\star}^2 + \norm{\alpha^{1/2}\llbracket\bm{u}_\star\rrbracket}_{\mathcal{F}_{h}^{\star}}^2   & \forall \bm u_\star \in \bm V_h^\star \; \star = \{e,p\},    \\
|\bm{u}_p|_{\rm dG,p}^2 &=\norm{{m}^{1/2}\nabla_h\cdot\bm u_p}_{\Omega_p}^2+
\norm{\gamma^{1/2}\llbracket\bm{u}_p\rrbracket_{\bm n}}_{\mathcal{F}_{h}^{p} }^2 & \forall \bm u_p\in \bm V_h^p,
\end{align*}
and for any $(\bm u_{e}, \bm u_{p}, \bm u_{f}) \in \bm V_h^e \times \bm V_h^p \times \bm V_h^p$ we have

\begin{align*}
|\UU|^2_{\Gamma_I} & = \norm{\alpha^{1/2}(\bm u_p - \bm u_e) }_{\mathcal{F}_h^I}^2 +\norm{\gamma^{1/2} ((1-\delta)\beta\bm u_p+\bm u_f) \cdot \bm n_p}_{\mathcal{F}_h^I}^2.
\end{align*}

%
%
Before showing the main result of the section, we introduce  the following fundamental lemmas. 
\begin{lemma}\label{lemma_1_fund}
For any $\UU,\VV \in \bm{V}_h$ and for large enough parameters $c_1,c_2$, it holds
\begin{align*}
    &\mathcal{A}_h(\UU,\UU)  \gtrsim |\UU|^2_{\rm dG}, \qquad
    &&\mathcal{A}_h(\UU,\VV)  \lesssim |\UU|_{\rm dG}|\VV|_{\rm dG}. 
\end{align*}
\end{lemma}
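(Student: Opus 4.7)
The plan is to decompose $\mathcal{A}_h$ according to its definition \eqref{eq:bilinear_Ah} into three independent contributions: the elastic form $\mathcal{A}_h^e(\bm u_e, \bm v_e)$, the analogous poroelastic skeletal form $\mathcal{A}_h^p(\bm u_p, \bm v_p)$, and the pressure/divergence form $\mathcal{B}_h^p(\beta\bm u_p+\bm u_f,\beta\bm v_p+\bm v_f)$. Each of these is a standard symmetric interior penalty dG bilinear form, and they act on three independent components of the dG seminorm $|\UU|_{\rm dG}$. Hence it is sufficient to establish continuity and coercivity for each piece individually, and the conclusion follows by summation.

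For a single piece, say $\mathcal{A}_h^\star$ with $\star\in\{e,p\}$, I would first introduce the natural local norm
\[
\norm{\bm u_\star}_{\rm dG,\star}^2 = \norm{\mathbb{C}^{1/2}:\bm{\epsilon}_h(\bm u_\star)}_{\Omega_\star}^2 + \norm{\alpha^{1/2}\llbracket\bm u_\star\rrbracket}_{\mathcal{F}_h^\star}^2,
\]
and apply Cauchy--Schwarz element by element on the volume term, plus Cauchy--Schwarz on faces for the consistency term $\langle\llbrace\bm\sigma_{eh}(\bm u_\star)\rrbrace,\llbracket\bm v_\star\rrbracket\rangle_{\mathcal{F}_h^\star}$. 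To bound the average of the stress on a face by a volume norm, I use the trace--inverse inequality \eqref{eq::traceinv} together with the definition \eqref{eq::stab_1} of the stabilization parameter $\alpha$, which precisely absorbs the factor $p_{\star,\kappa}^2 h_\kappa^{-1} \overline{\mathbb{C}}_\kappa$ that \eqref{eq::traceinv} produces. This yields continuity
\[
\mathcal{A}_h^\star(\bm u_\star,\bm v_\star)\lesssim \norm{\bm u_\star}_{\rm dG,\star}\norm{\bm v_\star}_{\rm dG,\star}.
\]
For coercivity I test with $\bm v_\star=\bm u_\star$: the symmetry of the form produces the two cross consistency terms combined into $-2\langle\llbrace\bm\sigma_{eh}(\bm u_\star)\rrbrace,\llbracket\bm u_\star\rrbracket\rangle_{\mathcal{F}_h^\star}$, which I split by Young's inequality into an $\epsilon$-fraction of $\norm{\mathbb{C}^{1/2}:\bm\epsilon_h(\bm u_\star)}_{\Omega_\star}^2$ (using once more \eqref{eq::traceinv}) and an $\epsilon^{-1}$-fraction of $\norm{\alpha^{1/2}\llbracket\bm u_\star\rrbracket}_{\mathcal{F}_h^\star}^2$. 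Choosing $\epsilon$ small and then $c_1$ large enough in \eqref{eq::stab_1} absorbs the off-diagonal contribution and leaves a positive multiple of $\norm{\bm u_\star}_{\rm dG,\star}^2$; the zero-order term $(\rho_\star\zeta^2 \bm u_\star,\bm u_\star)_{\Omega_\star}$ is nonnegative and can be dropped in the lower bound.

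The argument for $\mathcal{B}_h^p(\bm w,\bm w)$, with $\bm w=\beta\bm u_p+\bm u_f$, is strictly analogous but uses the divergence-based seminorm $|\bm w|_{\rm dG,p}^2 = \norm{m^{1/2}\nabla_h\cdot\bm w}_{\Omega_p}^2 + \norm{\gamma^{1/2}\llbracket\bm w\rrbracket_{\bm n}}_{\mathcal{F}_h^p}^2$, the trace--inverse inequality \eqref{eq::traceinv} applied to $\nabla_h\cdot\bm w$, and the penalty choice \eqref{eq::stab_2} with $c_2$ large enough; the normal-jump $\llbracket\cdot\rrbracket_{\bm n}$ replaces the full jump $\llbracket\cdot\rrbracket$ but the mechanism is identical. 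Summing the three local estimates then gives
\[
\mathcal{A}_h(\UU,\UU)\gtrsim \norm{\bm u_e}_{\rm dG,e}^2 + \norm{\bm u_p}_{\rm dG,p}^2 + |\beta\bm u_p+\bm u_f|_{\rm dG,p}^2 = |\UU|_{\rm dG}^2,
\]
and the continuity estimate via Cauchy--Schwarz in the three components.

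The only real subtlety I anticipate is quantifying exactly how large $c_1,c_2$ must be: the hidden constants depend on the shape-regularity constant from Assumption~\ref{ass::regular} and the constant in \eqref{eq::traceinv}, but once the penalties are defined elementwise as the maximum over neighbors as in \eqref{eq::stab_1}--\eqref{eq::stab_2}, this dependence is standard and the analysis is a direct adaptation of \cite{AntoniettiMazzieriNatipoltri2021,bonaldi}; no new ingredient is needed since the interface-coupling terms in $\mathcal{C}_h$ are \emph{not} part of $\mathcal{A}_h$ and therefore do not interfere here.
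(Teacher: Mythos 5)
Your proof is correct and takes essentially the same route as the paper, whose own proof simply decomposes $\mathcal{A}_h$ into $\mathcal{A}_h^e$, $\mathcal{A}_h^p$, and $\mathcal{B}_h^p$ and invokes \cite[Lemma A.3]{AntoniettiMazzieriNatipoltri2021} for exactly the standard SIPG continuity/coercivity estimates you spell out (trace--inverse inequality, Young's inequality, absorption via $c_1,c_2$). The one detail worth making explicit is that bounding the zero-order terms $(\rho_\star \zeta^2 \bm u_\star, \bm v_\star)_{\Omega_\star}$ in the \emph{continuity} estimate by $\norm{\bm u_\star}_{\rm dG,\star}\norm{\bm v_\star}_{\rm dG,\star}$ requires a discrete Poincar\'e--Korn inequality, since the seminorm $\norm{\cdot}_{\rm dG,\star}$ contains no $L^2$ contribution; this is standard under Assumption~\ref{ass::regular} with the boundary-face penalization in \eqref{eq::stab_1}, but both you and the paper use it silently.
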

\begin{proof}.
For the proof we use the definition of the bilinear form $\mathcal{A}_h(\cdot,\cdot)$ in \eqref{eq:bilinear_Ah} and  combine the results in \cite[Lemma A.3]{AntoniettiMazzieriNatipoltri2021}.
\end{proof}

\begin{lemma}\label{lemma_2_fund}
For any $(\bm u_{e}, \bm u_{p}) \in \bm V_h^e \times \bm V_h^p$ and any $ \bm u_{f} \in \bm V_h^p  $ it holds
\begin{equation}
2 | \langle \bm \sigma_{eh}(\bm u_e) \bm n_e, \bm u_p - \bm u_e  \rangle_{\mathcal{F}_h^I} | \lesssim \frac{1}{\sqrt{c_1}} \norm{\bm u_{e}}_{\rm dG,e} \norm{\alpha^{1/2}(\bm{u}_p -\bm u_e)}_{\mathcal{F}_h^I},  
\end{equation}
\begin{multline}
   2| \langle m \nabla \cdot (\beta  \bm u_{p} +   \bm u_{f}), ((1-\delta) \beta \bm u_{p} + \bm u_f) \cdot \bm n_p \rangle_{\mathcal{F}_h^I} | \\ \lesssim \frac{1}{\sqrt{c_2}}  |(\beta\bm u_{p}+\bm u_{f})(t)|_{\rm dG,p}\norm{\gamma^{1/2} ((1-\delta)\beta\bm u_p+\bm u_f) \cdot \bm n_p}_{\mathcal{F}_h^I},
\end{multline}
where $c_1$ and $c_2$ are the two positive constants at our disposal appearing in the definition of the stabilization function given in  \eqref{eq::stab_1}--\eqref{eq::stab_2}.
\end{lemma}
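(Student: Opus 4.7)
The plan is to prove both inequalities by the same mechanism: apply Cauchy--Schwarz on the face integral, balance the integrand with $\alpha^{\pm 1/2}$ (resp.\ $\gamma^{\pm 1/2}$), then use the trace-inverse inequality \eqref{eq::traceinv} to lift the trace back to an element norm, and finally use the explicit form of the stabilization in \eqref{eq::stab_1}--\eqref{eq::stab_2} to extract the factor $c_1^{-1/2}$ (resp.\ $c_2^{-1/2}$).

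For the first inequality, I would split each face $F\in\mathcal{F}_h^I$ into an $\alpha^{-1/2}$-weighted part and an $\alpha^{1/2}$-weighted part, obtaining by Cauchy--Schwarz
\[
2|\langle \bm\sigma_{eh}(\bm u_e)\bm n_e,\bm u_p-\bm u_e\rangle_{\mathcal{F}_h^I}|
\le 2\,\|\alpha^{-1/2}\bm\sigma_{eh}(\bm u_e)\bm n_e\|_{\mathcal{F}_h^I}\,\|\alpha^{1/2}(\bm u_p-\bm u_e)\|_{\mathcal{F}_h^I}.
\]
Then, for each $F\subset\partial\kappa$ with $\kappa\in\mathcal{T}_h^e$, I would apply \eqref{eq::traceinv} component-wise to get $\|\bm\sigma_{eh}(\bm u_e)\bm n_e\|_F^2\lesssim p_{e,\kappa}^2 h_\kappa^{-1}\,\overline{\mathbb{C}}_\kappa\,\|\mathbb{C}^{1/2}{:}\bm\epsilon_h(\bm u_e)\|_\kappa^2$, using the definition of $\overline{\mathbb{C}}_\kappa$ to pull out the norm of $\mathbb{C}^{1/2}$. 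Combining with $\alpha|_F\gtrsim c_1\,\overline{\mathbb{C}}_\kappa\,p_{e,\kappa}^2 h_\kappa^{-1}$ (and Assumption~\ref{ass::3} to pick either element sharing the face), the $p_{e,\kappa}^2 h_\kappa^{-1}\overline{\mathbb{C}}_\kappa$ factors cancel and what remains on the first factor is $c_1^{-1/2}\|\mathbb{C}^{1/2}{:}\bm\epsilon_h(\bm u_e)\|_{\Omega_e}\le c_1^{-1/2}\|\bm u_e\|_{\mathrm{dG},e}$, yielding the claim.

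For the second inequality I would proceed identically, this time balancing with $\gamma^{\pm 1/2}$. Cauchy--Schwarz gives
\[
2|\langle m\,\nabla_h\!\cdot(\beta\bm u_p+\bm u_f),((1-\delta)\beta\bm u_p+\bm u_f)\cdot\bm n_p\rangle_{\mathcal{F}_h^I}|
\le 2\|\gamma^{-1/2}m\,\nabla_h\!\cdot(\beta\bm u_p+\bm u_f)\|_{\mathcal{F}_h^I}\,\|\gamma^{1/2}((1-\delta)\beta\bm u_p+\bm u_f)\cdot\bm n_p\|_{\mathcal{F}_h^I}.
\]
The trace-inverse inequality applied to the scalar field $m\,\nabla_h\!\cdot(\beta\bm u_p+\bm u_f)$ on $\kappa\in\mathcal{T}_h^p$ yields a face bound proportional to $p_{p,\kappa}^2 h_\kappa^{-1}\overline{m}_\kappa\,\|m^{1/2}\nabla_h\!\cdot(\beta\bm u_p+\bm u_f)\|_\kappa^2$, which, after dividing by $\gamma|_F\gtrsim c_2\overline{m}_\kappa p_{p,\kappa}^2 h_\kappa^{-1}$, leaves exactly $c_2^{-1/2}\,|{\beta\bm u_p+\bm u_f}|_{\mathrm{dG},p}$ after summing over faces.

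The only mildly delicate point is making sure, on interface faces $F\in\mathcal{F}_h^I$, that the $h$--$p$ data used in the trace inequality match the $h$--$p$ data in the definition of $\alpha$ and $\gamma$; this is where the ``$\max$'' in \eqref{eq::stab_1}--\eqref{eq::stab_2} together with the local quasi-uniformity in Assumption~\ref{ass::3} does the bookkeeping, so that one may consistently choose the element on the poroelastic/elastic side of $F$ whose trace is being estimated. Apart from that, both bounds are a clean application of Cauchy--Schwarz plus the trace-inverse inequality, and no coercivity or lower bound on the bilinear forms is needed.
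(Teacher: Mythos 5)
Your proof is correct and is exactly the argument the paper intends: the paper's own proof is a one-line pointer to Assumption~\ref{ass::regular}, the trace-inverse inequality \eqref{eq::traceinv}, and \cite[Lemma A.2]{AntoniettiMazzieriNatipoltri2021}, which is precisely the weighted Cauchy--Schwarz plus trace-inverse plus penalty-cancellation mechanism you spell out. The only caveat is not yours but the paper's: as written, \eqref{eq::stab_2} omits the factor $c_2$ on faces of $\mathcal{F}_h^I$, so the stated $c_2^{-1/2}$ in the second bound only emerges if one reads $\gamma|_F\gtrsim c_2\,\overline{m}_\kappa p_{p,\kappa}^2 h_\kappa^{-1}$ on interface faces, as you implicitly do.
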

\begin{proof}.
The proof hinges on Assumption \ref{ass::regular} and the trace inverse inequality \eqref{eq::traceinv}. See also  \cite[Lemma A.2]{AntoniettiMazzieriNatipoltri2021}
\end{proof}
\begin{corollary}\label{cor:fund_cor}
For any $\UU = (\bm u_e, \bm u_p, \bm u_f) \in  \bm V_h$ and  for $c_1$ and $c_2$ large enough it holds 
\begin{equation}\label{cont_coerc_Ah}
  |\UU|_{\rm dG}^2 + |\UU|_{\Gamma_I}^2 \lesssim  \mathcal{A}_h(\UU,\UU) +  \mathcal{C}_{h}(\UU,\UU) \lesssim  |\UU|_{\rm dG}^2 + |\UU|_{\Gamma_I}^2.
\end{equation}
\end{corollary}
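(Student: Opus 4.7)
Both inequalities in \eqref{cont_coerc_Ah} are standard interior-penalty estimates once $\mathcal{C}_h(\UU,\UU)$ is rewritten in its natural ``penalty $+$ consistency'' form. The key algebraic observation is that the symmetric part of $\mathcal{C}_h(\UU,\UU)$ reproduces exactly the interface seminorm $|\UU|^2_{\Gamma_I}$, while the remaining part is antisymmetric (of consistency type) and can be controlled by Lemma~\ref{lemma_2_fund}, whose small factor $c_i^{-1/2}$ makes it absorbable into $|\UU|^2_{\rm dG}$. The coercivity/continuity of $\mathcal{A}_h$ already provided by Lemma~\ref{lemma_1_fund} will then close the argument.

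The first step is to expand and regroup the eight boundary subforms. Using $\bm n_p=-\bm n_e$, the four elasticity-type pieces with $\alpha$ collapse into
\[
\mathcal{A}^e_{\Gamma_I}(\bm u_e,\bm u_e)+\mathcal{A}^p_{\Gamma_I}(\bm u_p,\bm u_p)+\mathcal{C}^{ep}_{\Gamma_I}(\bm u_e,\bm u_p)+\mathcal{C}^{pe}_{\Gamma_I}(\bm u_p,\bm u_e)=\norm{\alpha^{1/2}(\bm u_p-\bm u_e)}^2_{\mathcal{F}_h^I}-2\langle \bm\sigma_{eh}(\bm u_e)\bm n_p,\bm u_p-\bm u_e\rangle_{\mathcal{F}_h^I},
\]
and the same kind of telescoping applied to the four $\mathcal{B}^{\bullet\bullet}_{\Gamma_I}$ terms produces
\[
\norm{\gamma^{1/2}((1-\delta)\beta\bm u_p+\bm u_f)\!\cdot\!\bm n_p}^2_{\mathcal{F}_h^I}-2\langle m\nabla_h\!\cdot\!(\beta\bm u_p+\bm u_f),\,((1-\delta)\beta\bm u_p+\bm u_f)\!\cdot\!\bm n_p\rangle_{\mathcal{F}_h^I}.
\]
The two squared-norm contributions are exactly what composes $|\UU|^2_{\Gamma_I}$.

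From here the proof essentially writes itself. Lemma~\ref{lemma_2_fund}, combined with Young's inequality, bounds each of the two antisymmetric brackets by $\tfrac{C}{\sqrt{c_i}}(|\UU|^2_{\rm dG}+|\UU|^2_{\Gamma_I})$. Pairing this with the continuity/coercivity of $\mathcal{A}_h$ from Lemma~\ref{lemma_1_fund} delivers the right-hand inequality in \eqref{cont_coerc_Ah} directly; for the left-hand (coercivity) inequality, I would choose $c_1,c_2$ large enough so that the coefficients remaining in front of $|\UU|^2_{\rm dG}$ and of each of the two pieces making up $|\UU|^2_{\Gamma_I}$ are strictly positive. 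The only mildly delicate point is the bookkeeping that delivers the two telescoping identities above from the eight subforms of $\mathcal{C}_h$; once those are verified, everything is textbook interior-penalty analysis.
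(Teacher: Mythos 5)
Your proposal is correct and follows essentially the same route as the paper: it rewrites $\mathcal{C}_h(\UU,\UU)$ as the interface penalty seminorm $|\UU|^2_{\Gamma_I}$ plus the two consistency brackets $-2\langle \bm\sigma_{eh}(\bm u_e)\bm n_p,\bm u_p-\bm u_e\rangle_{\mathcal{F}_h^I}-2\langle m\nabla_h\cdot(\beta\bm u_p+\bm u_f),((1-\delta)\beta\bm u_p+\bm u_f)\cdot\bm n_p\rangle_{\mathcal{F}_h^I}$, and then absorbs the latter via Lemma~\ref{lemma_2_fund} and Young's inequality into the coercivity of $\mathcal{A}_h$ from Lemma~\ref{lemma_1_fund}. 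Your telescoping identities are exactly the expansion the paper asserts (and your version even corrects the paper's typo $\bm u_p-\bm u_f$ to $\bm u_p-\bm u_e$ in the penalty term), so no further changes are needed.
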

\begin{proof}.
The proof follows by noting that 
\begin{multline*}
    \mathcal{C}_{h}(\UU,\WW) = - 2 \langle \bm \sigma_{e} (\bm u_e)  \bm n_p, \bm u_p -\bm u_e \rangle_{\mathcal{F}_h^{I}} 
    - 2\langle m \nabla \cdot (\beta  \bm u_{p} +   \bm u_{f}), ((1-\delta) \beta \bm u_{p} + \bm u_f) \cdot \bm n_p \rangle_{\Gamma_I}   \\ 
    +  \norm{\alpha^{1/2}(\bm u_p - \bm u_f) }_{\mathcal{F}_h^I}^2 
    +\norm{\gamma^{1/2} ((1-\delta)\beta\bm u_p+\bm u_f) \cdot \bm n_p}_{\mathcal{F}_h^I}^2,
\end{multline*}
and using  the results in Lemma~\ref{lemma_1_fund} and  Lemma~\ref{lemma_2_fund}.
\end{proof}

The main stability result is stated in the following theorem.
\begin{theorem}[Stability of the semi-discrete formulation]\label{teo:stability}
Let  \eqref{ass::regular} and \eqref{ass::3} be satisfied. For sufficiently large penalty parameters $c_1$ and $c_2$ in \eqref{eq::stab_1} and \eqref{eq::stab_2}, respectively, let $\UU_h(t) \in  \bm V_h $ be the solution of \eqref{eq::dgsystem_0} for any $t\in(0,T]$. Then, it holds
\begin{equation}\label{theo:energy_system}
\begin{aligned}
 \sup_{0\leq t \leq T}\norm{\UU_h(t)}^2_{\rm{E}}\lesssim \norm{\UU_{h0}}^2_{\rm{E}}
+ \int_0^T & \Big(\norm{(2\rho_e\zeta)^{-1/2}\bm f_e}_{\Omega_e}^2  +   \norm{(2\rho_e\zeta)^{-1/2}\bm  f_p}_{\Omega_p}^2  \\ &
    +  \norm{(k/\eta)^{1/2} \bm g_p}_{\Omega_p}^2\Big) ds,
\end{aligned}
\end{equation}
where the hidden constant depends on the final time $T$, on the penalization parameters $c_1$ and $c_2$ in \eqref{eq::stab_1}--\eqref{eq::stab_2} and on the material properties.
\end{theorem}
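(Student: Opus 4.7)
The approach mirrors the strategy used for the continuous stability result in Theorem \ref{thm:stability}, exploiting the symmetry of $\mathcal{M}$, $\mathcal{A}_h$, and $\mathcal{C}_h$ together with the positivity of $\mathcal{D}$. The key twist relative to the continuous case is that $\mathcal{C}_h(\UU_h, \dot{\UU}_h)$ no longer vanishes pointwise (the identities $\bm u_{eh} = \bm u_{ph}$ and $((1-\delta)\beta\bm u_{ph}+\bm u_{fh})\cdot\bm n_p=0$ on $\Gamma_I$ do not hold strongly at the discrete level), but symmetry together with Corollary \ref{cor:fund_cor} will allow us to recover a discrete energy identity in its place.

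First, I would test \eqref{eq::dgsystem_0} with $\VV_h=\dot{\UU}_h$. A direct inspection of the eight constituent forms of $\mathcal{C}_h$ shows that it is symmetric (in particular $\mathcal{B}^{pf}_{\Gamma_I}$ and $\mathcal{B}^{fp}_{\Gamma_I}$, and $\mathcal{C}^{ep}_{\Gamma_I}$ and $\mathcal{C}^{pe}_{\Gamma_I}$, are transposes of one another). Symmetry of $\mathcal{M}$, $\mathcal{A}_h$ and $\mathcal{C}_h$ then yields the identity
\begin{equation*}
\tfrac{1}{2}\tfrac{d}{dt}\Big[\mathcal{M}(\dot{\UU}_h,\dot{\UU}_h) + \mathcal{A}_h(\UU_h,\UU_h) + \mathcal{C}_h(\UU_h,\UU_h)\Big] + \mathcal{D}(\dot{\UU}_h,\dot{\UU}_h) = \mathcal{F}(\dot{\UU}_h).
\end{equation*}
Integrating in time on $[0,t]$, adding $\mathcal{D}(\UU_{h}(0),\UU_{h}(0))\ge 0$ on both sides, and invoking Lemma \ref{lemma:stab} together with Corollary \ref{cor:fund_cor} (which requires $c_1,c_2$ large enough) to bound $\mathcal{M}(\dot{\UU}_h,\dot{\UU}_h) \gtrsim \norm{\dot{\UU}_h}_0^2$ and $\mathcal{A}_h(\UU_h,\UU_h)+\mathcal{C}_h(\UU_h,\UU_h) \gtrsim |\UU_h|_{\rm dG}^2 + |\UU_h|_{\Gamma_I}^2$, one obtains the lower bound $\norm{\UU_h(t)}_{\rm E}^2 \lesssim \norm{\UU_{h0}}_{\rm E}^2 + 2\int_0^t\mathcal{F}(\dot{\UU}_h)(s)\,ds$ (after also using the continuity bounds of Lemma \ref{lemma:stab} and Corollary \ref{cor:fund_cor} on the initial-time contribution).

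To close the estimate, I would bound the forcing term by Cauchy–Schwarz on each component, then apply the weighted Young inequality tuned to the weights appearing in $\mathcal{D}$, namely $2\rho_e\zeta$ on $\Omega_e$, $2\rho_p\zeta$ on $\Omega_p$ and $\eta/k$ for $\bm u_f$. The $\dot{\UU}_h$-contributions so produced are absorbed in $\int_0^t \mathcal{D}(\dot{\UU}_h,\dot{\UU}_h)(s)\,ds$ on the left-hand side, leaving only the data norms stated in \eqref{theo:energy_system}; taking the supremum over $t\in[0,T]$ concludes.

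The main obstacle is the treatment of the coupling form: unlike in the continuous case, $\mathcal{C}_h(\UU_h,\UU_h)$ is not sign-definite by itself, so its contribution after integration cannot simply be discarded or bounded below by a positive part of the energy. The resolution is precisely Corollary \ref{cor:fund_cor}, which shows that the cross-terms appearing in $\mathcal{C}_h$ can be controlled, via Lemma \ref{lemma_2_fund} and the trace-inverse inequality \eqref{eq::traceinv}, by an arbitrarily small fraction of the penalty-weighted jump seminorms, provided $c_1$ and $c_2$ are chosen sufficiently large. Once this coercivity is in hand, the remainder of the argument is essentially the algebraic manipulation already used in \cite{ABM_Vietnam} for the purely elastic/poroelastic setting, and no Gronwall-type iteration is required since the damping term $\mathcal{D}$ directly absorbs the only $\dot{\UU}_h$-dependence coming from the right-hand side.
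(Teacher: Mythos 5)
Your proposal is correct and follows essentially the same route as the paper: test with $\dot{\UU}_h$, use symmetry to obtain the energy identity, integrate in time, invoke Lemma \ref{lemma:stab} and Corollary \ref{cor:fund_cor} (for $c_1,c_2$ large enough) to bound the quadratic terms below by $\norm{\UU_h(t)}_{\rm E}^2$, and close with Cauchy--Schwarz and a weighted Young inequality that absorbs the velocity terms into $\int_0^t\mathcal{D}(\dot{\UU}_h,\dot{\UU}_h)\,ds$. Your choice of Young weights ($2\rho_e\zeta$, $2\rho_p\zeta$, $\eta/k$) is the natural one and matches the data norms in \eqref{theo:energy_system} up to what appears to be a typo in the stated weight for $\bm f_p$.
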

\begin{proof}.
By taking $\VV_h = \dot{\UU}_h \in  \bm V_h$ in \eqref{eq::dgsystem_0} we obtain
\begin{equation*}
\frac{1}{2}\frac{d}{dt}\bigg[
\mathcal{M}(\dot{\UU}_h, \dot{\UU}_h)  +
\mathcal{A}_h(\UU_h,\UU_h) + \mathcal{C}_h(\UU_h,\UU_h)   \bigg] + \mathcal{D}(\dot{\UU}_h, \dot{\UU}_h) =
\mathcal{F}(\dot{\UU}_h). 
\end{equation*}
Next, we integrate in time between $0$ and $t$, add on both side $\mathcal{D}(\UU_0,\UU_0)\geq 0$, use \eqref{cont_coerc_Ah} and \eqref{eq:M-cont}--\eqref{eq:M-coer} to obtain  
\begin{equation*}
\norm{{\UU}_h(t)}_{\rm E}^2 
+ \int_0^t \mathcal{D}(\dot{\UU}_h, \dot{\UU}_h)(s)\, ds \lesssim
 \norm{{\UU}_{h0}}_{\rm E}^2  + 2\int_0^t \mathcal{F}(\dot{\UU}_h)(s) \, ds, 
\end{equation*}
for large enough penalty constants $c_1$ and $c_2$.
Next, to estimate the last term on the right-hand side we use the Cauchy-Schwarz and Young inequalities. 
\end{proof}

\section{Semi-discrete error analysis}\label{sec:semi_discrete_error}

This section is devoted to the semi-discrete error analysis. We first recall some standard interpolation error estimates written in the context of polytopal discretizations and then present the main results of the section. We introduce the following definition and a further mesh assumption (cf. \cite{cangiani2014hp,CangianiDongGeorgoulisHouston_2017}) in order to avoid technicalities in the following proofs.
\begin{definition}\label{def:covering}
A \textit{covering} $\mathcal{T}_{\S}=\{\mathcal{K}\}$ of the polytopic mesh $\mathcal{T}_h$ is a set of regular shaped $d$-dimensional simplices $\mathcal{K}$, $d=2,3$, s.t. $\forall \ \kappa\in\mathcal{T}_h$, $\exists \ \mathcal{K}\in  \RRR{\mathcal{T}_{\S}}$ s.t. $\kappa \subseteq \mathcal{K}$.
\end{definition}

\begin{assumption}
Any mesh $\mathcal{T}_h$ admits a covering $\mathcal{T}_{\S}$ in the sense of \eqref{def:covering} such that 
i) $\max_{\kappa\in\mathcal{T}_h} \textrm{card}\{\kappa^\prime\in\mathcal{T}_h:\kappa^\prime\cap\mathcal{K}\neq\emptyset,\ \mathcal{K}\in\MB{\mathcal{T}_{\S}} \text{ s.t.} \ \kappa\subset\mathcal{K}\}\lesssim 1$ and 
ii) $h_\mathcal{K}\lesssim h_\kappa $ for each pair $\kappa\in\mathcal{T}_h, \ \mathcal{K}\in\MB{\mathcal{T}_{\S}}$ with $\kappa\subset\mathcal{K}$.
\label{ass::2}
\end{assumption}
For all  $ t\in[0,T]$ we also introduce the norm
\begin{equation*}
\trinorm{\UU(t)}_{\rm E}^2 = \norm{\dot{\UU}(t)}_{0}^2 +
\trinorm{\UU(t)}_{\rm dG}^2 + |\UU(t)|^2_{\Gamma_I} + \int_0^t \mathcal{D}(\dot{\UU},\dot{\UU})(s) \, ds
 + \mathcal{D}(\UU,\UU)(0) ,
\end{equation*}
for any $\UU = (\bm u_e, \bm u_p, \bm u_f) \in \bm H^2(\mathcal{T}_h^e) \times \bm H^2(\mathcal{T}_h^p) \times \bm H^2(\mathcal{T}_h^p)$, where 
\begin{multline}
    \trinorm{\UU}_{\rm dG}^2 = |\UU |_{\rm dG}^2 +  \norm{\alpha^{-1/2}\llbrace\mathbb{C}:\bm{\epsilon}_h(\bm{u}_e)\rrbrace}_{\mathcal{F}_h^e}^2 +
    \norm{\alpha^{-1/2}\llbrace\mathbb{C}:\bm{\epsilon}_h(\bm{u}_p)\rrbrace}_{\mathcal{F}_h^p}^2 \\
    + \norm{\gamma^{-1/2}\llbrace m \nabla_h\cdot\bm (\beta \bm u_p + \bm u_f)\rrbrace}_{\mathcal{F}_h^p}^2.
\end{multline}
Next, we prove the following lemma.
\begin{lemma} 
For any $\UU \in \bm H^2(\mathcal{T}_h^e) \times \bm H^2(\mathcal{T}_h^p) \times \bm H^2(\mathcal{T}_h^p)$ and any $\VV_h  \in  \bm{V}_h$ it holds 
\begin{align}\label{eq::continuity_extended_A}
\mathcal{A}_h(\UU,\VV_h) + \mathcal{C}_{h}(\UU,\VV_h) & \lesssim  (\trinorm{\UU}_{\rm dG} + |\UU|_{\Gamma_I}) (|\VV_h|_{\rm dG} + |\VV_h|_{\Gamma_I}).
\end{align}
\end{lemma}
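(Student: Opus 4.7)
The plan is to bound each constituent term of $\mathcal{A}_h$ and $\mathcal{C}_h$ by a product of factors, one drawn from $\trinorm{\UU}_{\rm dG}+|\UU|_{\Gamma_I}$ and the other from $|\VV_h|_{\rm dG}+|\VV_h|_{\Gamma_I}$, using weighted Cauchy--Schwarz inequalities whose weights match the stabilization functions $\alpha$ and $\gamma$. The key structural observation is the asymmetry between $\UU$, which is only piecewise $\bm H^2$, and $\VV_h$, which is piecewise polynomial: averages of stresses/divergences of $\UU$ must be absorbed into the $\alpha^{-1/2}\llbrace\cdot\rrbrace$ and $\gamma^{-1/2}\llbrace\cdot\rrbrace$ summands of $\trinorm{\UU}_{\rm dG}$, whereas averages of $\VV_h$ can be reduced to volume quantities via the trace--inverse inequality~\eqref{eq::traceinv}.

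First I would decompose $\mathcal{A}_h=\mathcal{A}_h^e+\mathcal{A}_h^p+\mathcal{B}_h^p$ as in \eqref{eq:bilinear_Ah}--\eqref{eq::DGbilinearforms} and treat the three summands in parallel. For a generic $\mathcal{A}_h^\star(\bm u_\star,\bm v_{h,\star})$, $\star\in\{e,p\}$, the volume contributions $(\bm{\sigma}_{eh}(\bm u_\star),\bm\epsilon_h(\bm v_{h,\star}))_{\Omega_\star}+(\rho_\star\zeta^2\bm u_\star,\bm v_{h,\star})_{\Omega_\star}$ are handled by a direct Cauchy--Schwarz, producing $\norm{\mathbb{C}^{1/2}:\bm\epsilon_h(\bm u_\star)}_{\Omega_\star}\norm{\mathbb{C}^{1/2}:\bm\epsilon_h(\bm v_{h,\star})}_{\Omega_\star}$ plus the $L^2$ damping contribution, both controlled by $\trinorm{\UU}_{\rm dG}\,|\VV_h|_{\rm dG}$. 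The consistency term $-\langle\llbrace\bm\sigma_{eh}(\bm u_\star)\rrbrace,\llbracket\bm v_{h,\star}\rrbracket\rangle_{\mathcal{F}_h^\star}$ is split by inserting $\alpha^{-1/2}\alpha^{1/2}$ and a face-wise Cauchy--Schwarz, producing $\norm{\alpha^{-1/2}\llbrace\bm\sigma_{eh}(\bm u_\star)\rrbrace}_{\mathcal{F}_h^\star}\cdot\norm{\alpha^{1/2}\llbracket\bm v_{h,\star}\rrbracket}_{\mathcal{F}_h^\star}$, the first factor being a summand of $\trinorm{\UU}_{\rm dG}$ and the second a summand of $|\VV_h|_{\rm dG}$. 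The symmetric consistency term $-\langle\llbracket\bm u_\star\rrbracket,\llbrace\bm\sigma_{eh}(\bm v_{h,\star})\rrbrace\rangle_{\mathcal{F}_h^\star}$ is treated analogously, but here I use that $\bm v_{h,\star}$ is polynomial: combining~\eqref{eq::traceinv} with the definition~\eqref{eq::stab_1} of $\alpha$ yields $\norm{\alpha^{-1/2}\llbrace\bm\sigma_{eh}(\bm v_{h,\star})\rrbrace}_{\mathcal{F}_h^\star}\lesssim\norm{\mathbb{C}^{1/2}:\bm\epsilon_h(\bm v_{h,\star})}_{\Omega_\star}$, so this contribution is controlled by $|\UU|_{\rm dG}\,|\VV_h|_{\rm dG}$. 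The penalty $\langle\alpha\llbracket\bm u_\star\rrbracket,\llbracket\bm v_{h,\star}\rrbracket\rangle_{\mathcal{F}_h^\star}$ factorizes at once as a product of $\alpha^{1/2}$-weighted jumps. The bilinear form $\mathcal{B}_h^p$ is handled in exactly the same way, replacing $\mathbb{C}^{1/2}\!:\!\bm\epsilon_h$ by $m^{1/2}\nabla_h\cdot$, $\alpha$ by $\gamma$, and the full jump by the normal jump; here Assumption~\ref{ass::3} guarantees that the trace--inverse estimate on neighboring elements has comparable weights.

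For the coupling form $\mathcal{C}_h$, I would regroup its eight subforms, observing that upon expansion they reassemble into expressions of the type $\langle\bm\sigma_{eh}(\bm u_e)\bm n_p,\bm v_{h,p}-\bm v_{h,e}\rangle_{\mathcal{F}_h^I}$, its transpose, $\langle m\nabla_h\!\cdot\!(\beta\bm u_p+\bm u_f),((1-\delta)\beta\bm v_{h,p}+\bm v_{h,f})\!\cdot\!\bm n_p\rangle_{\mathcal{F}_h^I}$, its transpose, and the interface penalties. Each piece admits exactly the weighted Cauchy--Schwarz treatment used above: traces of $\bm\sigma_{eh}(\bm u_\star)$ and $m\nabla_h\!\cdot\!(\beta\bm u_p+\bm u_f)$ on $\mathcal{F}_h^I$ are absorbed into $\trinorm{\UU}_{\rm dG}$ (the face integrals over $\mathcal{F}_h^I$ being naturally appended to the extended norm exactly as in \cite{AntoniettiMazzieriNatipoltri2021}); their counterparts involving $\VV_h$ are reduced to volume dG--norms by~\eqref{eq::traceinv}; and the jumps $\bm u_p-\bm u_e$ and $((1-\delta)\beta\bm u_p+\bm u_f)\!\cdot\!\bm n_p$ on $\mathcal{F}_h^I$ are precisely the two constituents of $|\UU|_{\Gamma_I}$, to be paired against the corresponding quantities in $|\VV_h|_{\Gamma_I}$. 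Summing all bounds and absorbing cross-products with an elementary inequality yields~\eqref{eq::continuity_extended_A}.

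The main obstacle is not any single estimate but the bookkeeping: one has to verify that every face integral arising from $\mathcal{A}_h+\mathcal{C}_h$ either (i) features a trace of $\UU$ that is matched by a term of $\trinorm{\UU}_{\rm dG}+|\UU|_{\Gamma_I}$, or (ii) features a trace of $\VV_h$ which, being polynomial, can be reduced to a volume norm via~\eqref{eq::traceinv} under Assumptions~\ref{ass::regular}--\ref{ass::3}. Care is needed in particular on $\mathcal{F}_h^I$, where the coupling subforms mix elastic, poroelastic solid, and filtration contributions with different weights $\alpha$ and $\gamma$, and where the interface normal jump $((1-\delta)\beta\bm u_p+\bm u_f)\cdot\bm n_p$ must be correctly identified as a single object living in $|\UU|_{\Gamma_I}$ rather than split into its components.
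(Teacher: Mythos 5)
Your proposal is correct and follows essentially the same route as the paper: the paper's proof simply regroups $\mathcal{C}_h(\UU,\VV_h)$ into the symmetric interface consistency and penalty terms you identify and then invokes ``classical dG arguments'' (weighted Cauchy--Schwarz with the $\alpha^{\pm 1/2}$, $\gamma^{\pm1/2}$ splittings and the trace--inverse inequality for the polynomial test function), which is precisely what you carry out in detail. The only caveat worth noting is that the averages of $\bm\sigma_{eh}(\bm u_e)$ and $m\nabla_h\cdot(\beta\bm u_p+\bm u_f)$ on $\mathcal{F}_h^I$ are not literally covered by the stated definition of $\trinorm{\cdot}_{\rm dG}$ (which sums only over $\mathcal{F}_h^e$ and $\mathcal{F}_h^p$), an imprecision already present in the paper that you handle in the same implicit way.
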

\begin{proof}.
The proof is obtained by using classical dG arguments and by noting that
\begin{align*}
\mathcal{C}_{h}(\UU,\VV_h) =  &  -\langle  \bm \sigma_{eh}(\bm u_e) \bm n_p, \bm v_{ph} - \bm v_{eh}  \rangle_{\mathcal{F}_h^{I}} 
  -\langle \bm \sigma_{eh}(\bm v_{eh}) \bm n_p, \bm u_p - \bm u_e \rangle_{\mathcal{F}_h^{I}} 
  \\ & + \langle \alpha (\bm u_p - \bm u_e),  \bm v_{ph}- \bm v_{eh} \rangle_{\mathcal{F}_h^{I}} \\
  &  - \langle  m\nabla_h \cdot (\beta \bm u_{p} + \bm u_f) ,  ((1-\delta)\beta \bm v_{ph} + \bm v_{fh}) \cdot \bm n_p \rangle_{\mathcal{F}_h^{I}} \\ 
  &  - \langle  m \nabla_h \cdot (\beta \bm v_{ph} + \bm v_{fh}), ((1-\delta)\beta \bm u_p + \bm u_f) \cdot \bm n_p \rangle_{\mathcal{F}_h^{I}} \\ & +  \langle \gamma ((1-\delta)\beta \bm u_p + \bm u_f) \cdot \bm n_p, ((1-\delta)\beta \bm v_{ph} + \bm v_{fh}) \cdot \bm n_p \rangle_{\mathcal{F}_h^{I}}.
\end{align*}
See also \cite{AntoniettiMazzieri2018} and \cite[Lemma A.3]{AntoniettiMazzieriNatipoltri2021}. 
\end{proof}

For an open bounded polytopic domain $\Sigma\subset\mathbb{R}^d$ and a generic polytopic mesh $\mathcal{T}_h$ over $\Sigma$ satisfying \eqref{ass::2},
as in  \cite{cangiani2014hp}, we can introduce the Stein's extension operator $\tilde{\mathcal{E}}:H^m(\kappa)\rightarrow H^m(\mathbb{R}^d)$ \cite{stein1970singular}, for any $\kappa\in\mathcal{T}_h$ and $m\in\mathbb{N}_0$, such that $\tilde{\mathcal{E}}v|_\kappa=v$ and $\norm{\tilde{\mathcal{E}}v}_{m,\mathbb{R}^d}\lesssim\norm{v}_{m,\kappa}$. The corresponding vector-valued version mapping $\bm H^m(\kappa)$ onto $\bm H^m(\mathbb{R}^d)$ acts component-wise and is denoted in the same way. 
In what follows, for any $\kappa\in\mathcal{T}_h$, we will denote by $\mathcal{K}_\kappa$ the simplex belonging to $\mathcal{T}_{\S}$ such that $\kappa\subset\mathcal{K}_\kappa$.

The next result provides the interpolation bounds that are instrumental for the derivation of the a-priori error estimate.

\begin{lemma}\label{lemma:interp_estimate}
For any $\UU = (\bm u_e, \bm u_p, \bm u_f) \in  C^1([0,T];\bm V\cap \bm H^m(\mathcal{T}_h^e)\times\bm H^n(\mathcal{T}_h^p)\times \bm H^\ell(\mathcal{T}_h^p))$ with $m,n,\ell\geq 2$, there exists $\UU_I = (\bm u_{e}, \bm u_{p},\bm u_{f})_I \in C^1([0,T]; \bm V_h )$ such that
\begin{equation}\label{eq:interp_est}
\begin{aligned}
\max_{0\leq t\leq T} \trinorm{(\UU-\UU_I)(t)}_{\rm E }^2 \lesssim & \sum_{\kappa\in\mathcal{T}_h^e}{\frac{h_\kappa^{2(s_\kappa-1)}}{p_{e,\kappa}^{2m-3}}}\mathcal{N}(\bm u_e)^2_{m,\mathcal{K}_\kappa}
+
\sum_{\kappa\in\mathcal{T}_h^p}{\frac{h_\kappa^{2(q_\kappa-1)}}{p_{p,\kappa}^{2n-3}}}\mathcal{N}(\bm u_p)^2_{n,\mathcal{K}_\kappa} \\ & +
\sum_{\kappa\in\mathcal{T}_h^p}{\frac{h_\kappa^{2(r_\kappa-1)}}{p_{p,\kappa}^{2\ell-3}}}\mathcal{N}(\bm u_f)^2_{\ell,\mathcal{K}_\kappa},
\end{aligned}
\end{equation}
where $s_\kappa = \min(m,p_{e,\kappa}+1)$, $q_\kappa = \min(n,p_{p,\kappa}+1)$, $r_\kappa = \min(\ell,p_{p,\kappa}+1)$, and 
$$\mathcal{N}(\bm u)^2_{m,\mathcal{K}_\kappa} = \max_{0\leq t\leq T}  \left((1+T)
\norm{\widetilde{\mathcal{E}}\dot{\bm u}(t)}^2_{m,\mathcal{K}_\kappa}\hspace{-1mm}+
\norm{\widetilde{\mathcal{E}}{\bm u}(t)}^2_{m,\mathcal{K}_\kappa}\right).$$
\end{lemma}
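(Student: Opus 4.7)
The plan is to construct $\UU_I$ componentwise via an $hp$-optimal polynomial approximation operator acting on the Stein extensions $\widetilde{\mathcal{E}}\bm u_e$, $\widetilde{\mathcal{E}}\bm u_p$, $\widetilde{\mathcal{E}}\bm u_f$ of the exact fields. Specifically, for each $\kappa \in \mathcal{T}_h^{\#}$ (with $\# = e,p$), I would take $\UU_I|_\kappa$ to be the restriction to $\kappa$ of the $L^2$-orthogonal projection of the Stein extension onto $\mathcal{P}_{p_{\#,\kappa}}(\mathcal{K}_\kappa)$, where $\mathcal{K}_\kappa$ is the simplex of the covering $\mathcal{T}_{\S}$ containing $\kappa$ (cf. Assumption~\ref{ass::2}). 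This choice ensures that $\UU_I \in C^1([0,T];\bm V_h)$ whenever $\UU \in C^1([0,T];\bm V)$ and that we can invoke the standard $hp$-approximation results on shape-regular simplices to bound $L^2$, broken $H^1$, and trace norms of $\UU-\UU_I$, uniformly in the mesh size and polynomial degree.

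Next, I would bound each contribution to $\trinorm{(\UU - \UU_I)(t)}_{\rm E}^2$ separately. For the $\|\dot{\UU} - \dot{\UU}_I\|_0^2$ term and the dissipation contributions $\int_0^t \mathcal{D}(\dot{\UU}-\dot{\UU}_I,\dot{\UU}-\dot{\UU}_I)(s)\,ds + \mathcal{D}(\UU-\UU_I,\UU-\UU_I)(0)$, I would use $L^2$-type approximation bounds applied to $\dot{\UU}$ and $\UU$ at $t=0$, which yields a factor $(1+T)\max_t \|\widetilde{\mathcal{E}}\dot{\bm u}\|_{m,\mathcal{K}_\kappa}^2 + \|\widetilde{\mathcal{E}}\bm u\|_{m,\mathcal{K}_\kappa}^2$ inside $\mathcal{N}(\bm u)^2_{m,\mathcal{K}_\kappa}$. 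For the elastic stiffness contributions in $\trinorm{\cdot}_{\rm dG}$ I would use broken $H^1$ estimates on each element together with the trace inequality \eqref{eq::traceinv} for the jump and lifted average terms $\|\alpha^{1/2}\llbracket\cdot\rrbracket\|_{\mathcal{F}_h^\star}$ and $\|\alpha^{-1/2}\llbrace \mathbb{C}:\bm\epsilon_h(\cdot)\rrbrace\|_{\mathcal{F}_h^\star}$. The bounds for the divergence-type terms involving $m^{1/2}\nabla_h \cdot(\beta(\bm u_p-\bm u_{pI}) + (\bm u_f - \bm u_{fI}))$ and the corresponding face norms follow analogously on $\Omega_p$ by applying the triangle inequality and treating $\bm u_p$ and $\bm u_f$ separately.

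For the interface seminorm $|\UU-\UU_I|^2_{\Gamma_I}$, I would use Assumption~\ref{ass::regular} and the trace-inverse inequality to reduce each face contribution to $\|\alpha^{1/2}\llbracket\cdot\rrbracket\|$-type and $\|\gamma^{1/2}\llbracket\cdot\rrbracket_{\bm n}\|$-type bounds already controlled by the bulk estimates, noting that $\bm u_e = \bm u_p$ on $\Gamma_I$ in the exact solution makes the jump $\alpha^{1/2}(\bm u_{pI}-\bm u_{eI})$ reducible to interpolation errors of each of $\bm u_e$ and $\bm u_p$ on adjacent elements, and similarly for the filtration trace. Collecting all contributions and using the usual $hp$-scaling $h_\kappa^{s_\kappa-1} p_{\#,\kappa}^{-(m-3/2)}$ (and the analogous exponents with $n,\ell$) gives the bound \eqref{eq:interp_est} with the correct powers.

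The main obstacle I anticipate is the bookkeeping of the different powers of $p$ in the face terms: the dual norm of the average $\|\alpha^{-1/2}\llbrace\cdot\rrbrace\|$ and the jump norm $\|\alpha^{1/2}\llbracket\cdot\rrbracket\|$ scale differently with $p$, and both must produce the stated $p_{\#,\kappa}^{-(2m-3)}$ factor after multiplication by the $p$-dependent penalty weights \eqref{eq::stab_1}--\eqref{eq::stab_2}. Once the correct interpolation estimates from the polytopal literature (notably \cite{CangianiDongGeorgoulisHouston_2017}) are applied on each element $\mathcal K_\kappa$ and the face contributions are handled via \eqref{eq::traceinv}, summation over $\mathcal{T}_h$ produces the claimed bound, with the time factor $(1+T)$ arising from the Lipschitz-in-time control $\|\bm u(t) - \bm u(0)\|_{m,\mathcal{K}_\kappa} \lesssim T \max_t \|\dot{\bm u}\|_{m,\mathcal{K}_\kappa}$ used to treat the time-integrated dissipation term.
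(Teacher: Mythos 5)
Your proposal follows essentially the same route as the paper: the interpolant is built from the Stein extension composed with a polynomial projection on the covering simplices of Assumption~\ref{ass::2}, the $L^2$, dissipation, dG, and interface contributions to $\trinorm{\cdot}_{\rm E}$ are bounded term by term via the $hp$-approximation results of \cite{CangianiDongGeorgoulisHouston_2017} (the paper invokes Lemmas~23 and~33 there, plus the analogous lemmas in \cite{bonaldi,AntoniettiMazzieriNatipoltri2021}), and the factor $(1+T)$ comes from majorizing the time integral of the dissipation by $T$ times its maximum. One caveat: the face and interface contributions of the error cannot be handled with the trace-inverse inequality \eqref{eq::traceinv}, which holds only for polynomials, whereas $\UU-\UU_I$ is not piecewise polynomial; the correct tool is the $hp$ trace \emph{approximation} bound (Lemma~33 of \cite{CangianiDongGeorgoulisHouston_2017}, built on a continuous trace inequality over the simplices $S_\kappa^F$ of Definition~\ref{def::polytopic_regular}), which is what the paper uses and what yields the stated $p^{-(2m-3)}$ scaling after multiplication by the penalty weights.
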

\begin{proof}.
The first part of the proof readily follows by reasoning as in \cite[Lemma 5.1]{bonaldi}, cf. also \cite[Lemma 4.2]{AntoniettiMazzieriNatipoltri2021}. 
To infer estimate \eqref{eq:interp_est}, we resort to the $hp$-approximation properties stated in \cite[Lemma 23]{CangianiDongGeorgoulisHouston_2017}, implying 
\begin{equation*}
\|\dot{\UU}-\dot{\UU}_I \|_0^2
\lesssim  \sum_{\kappa\in\mathcal{T}_h^e} 
{\frac{h_\kappa^{2s_\kappa}}{p_{e,\kappa}^{2m}}}\norm{\widetilde{\mathcal{E}}\dot{\bm u_e}}_{m,\mathcal{K}_\kappa}^2 + 
 \sum_{\kappa\in\mathcal{T}_h^p} 
{\frac{h_\kappa^{2q_\kappa}}{p_{p,\kappa}^{2n}}} \norm{\widetilde{\mathcal{E}}\dot{\bm u_p}}_{n,\mathcal{K}_\kappa}^2 
 + 
 \sum_{\kappa\in\mathcal{T}_h^p} 
{\frac{h_\kappa^{2r_\kappa}}{p_{p,\kappa}^{2\ell}}}\norm{\widetilde{\mathcal{E}}\dot{\bm u_f}}_{\ell,\mathcal{K}_\kappa}^2.
\end{equation*}
Then, a similar result hods for the terms $\int_0^t \mathcal{D}(\dot{\UU}-\dot{\UU}_I,\dot{\UU}-\dot{\UU}_I)(s)\, ds$ and $\mathcal{D}(\UU-\UU_I,\UU-\UU_I)(0)$.
Finally, we bound the term $| \UU -  \UU_I |^2_{\Gamma_I}$ by applying the triangle and Cauchy--Schwarz inequalities followed by \cite[Lemma 33]{CangianiDongGeorgoulisHouston_2017}, to infer
\begin{align*}
 | \UU -  \UU_I |^2_{\Gamma_I}  & \lesssim  
 \norm{\alpha^{1/2}(\bm{u}_e - \bm{u}_{eI})}_{\mathcal{F}_h^I}^2  +
\norm{\alpha^{1/2}(\bm{u}_p - \bm{u}_{pI})  }_{\mathcal{F}_h^I}^2 
  + \norm{\gamma^{1/2}(1-\delta)\beta(\bm u_p-\bm u_{pI})\cdot \bm n_p}_{\mathcal{F}_h^I}^2 
 \\ & \quad + \norm{\gamma^{1/2} (\bm u_f-\bm u_{fI})\cdot \bm n_p}_{\mathcal{F}_h^I}^2 
 \\ & \lesssim
  \sum_{\kappa\in\mathcal{T}_h^e}
{\frac{h_\kappa^{2(s_\kappa-1)}}{p_{e,\kappa}^{2m-3}}}\norm{\widetilde{\mathcal{E}}\bm u_e}_{m,\mathcal{K}_\kappa}^2 
+\sum_{\kappa\in\mathcal{T}_h^p}
{\frac{h_\kappa^{2(q_\kappa-1)}}{p_{p,\kappa}^{2n-3}}}\norm{\widetilde{\mathcal{E}}\bm u_p}_{n,\mathcal{K}_\kappa}^2   + \sum_{\kappa\in\mathcal{T}_h^p}
{\frac{h_\kappa^{2(r_\kappa-1)}}{p_{p,\kappa}^{2\ell-3}}}\norm{\widetilde{\mathcal{E}}\bm u_f}_{\ell,\mathcal{K}_\kappa}^2.
\end{align*}
\end{proof}

Before presenting the main result of this Section, we 
set for any time $t\in (0,T]$
\begin{equation*}
    \EE(t) = (\UU-\UU_h)(t) = (\bm u_e-\bm u_{eh},\bm u_p-\bm u_{ph},\bm w_p-\bm w_{ph})(t), 
\end{equation*}
 and use the \textit{strong consistency} of the semi-discrete formulation \eqref{eq::dgsystem_0} to write the following \textit{error equation} 
\begin{equation}\label{eq::error_eq} 
\mathcal{M}(\ddot{\EE},\VV_h) +
\mathcal{D} (\dot{\EE},\VV_h) + \mathcal{A}_h({\EE},\VV_h) +
\mathcal{C}_{h}({\EE},\VV_h) = 0 \quad \forall \, \VV_h \in  \bm{V}_h.
\end{equation}

\begin{theorem}[Semi-discrete error estimates] \label{thm::error-estimate}
Let \eqref{ass::regular},  \eqref{ass::3}, \eqref{ass::2}, and the hypothesis of \eqref{thm:stability} hold. Let the solution $\UU=(\bm u_e, \bm u_p,\bm u_f)$ of problem \eqref{eq::weakform} be such that 
$$
\UU\in C^2([0,T];\bm H^m(\mathcal{T}_h^e)\times \bm H^n(\mathcal{T}_h^p) \times \bm H^\ell(\mathcal{T}_h^p)) \,\cap\, C^1([0,T]; \bm V),
$$ 
with $m,n,\ell\geq 2$ and let $\UU_h = (\bm u_{e},\bm u_{p}, \bm u_{f})_h\in C^2([0,T];\bm{V}_h )$ be the solution of  \eqref{eq::dgsystem_0} with $c_1$ and  $c_2$ sufficiently large. 
Then, the discretization error $\EE=\UU-\UU_h$ satisfies
\begin{multline}\label{eq:error_estimate}
\max_{0\leq t \leq T} \norm{\EE(t)}^2_{\rm E}\lesssim  
\sum_{\kappa\in\mathcal{T}_h^e}{\frac{h_\kappa^{2(s_\kappa-1)}}{p_{e,\kappa}^{2m-3}}}
\mathcal{N}(\bm u_e)^2_{m,\mathcal{K}_\kappa} + 
\sum_{\kappa\in\mathcal{T}_h^p}{\frac{h_\kappa^{2(q_\kappa-1)}}{p_{p,\kappa}^{2n-2}}}
 \mathcal{N}(\bm u_p)^2_{n,\mathcal{K}_\kappa} +  \sum_{\kappa\in\mathcal{T}_h^p}{\frac{h_\kappa^{2(r_\kappa-1)}}{p_{p,\kappa}^{2\ell-3}}}
\mathcal{N}(\bm u_f)^2_{\ell,\mathcal{K}_\kappa},
\end{multline}
with $\mathcal{N}(\bm u)^2_{m,\mathcal{K}_\kappa} = \max_{0\leq t \leq T} \left( \norm{\widetilde{\mathcal{E}}\ddot{\bm u}(t)}_{m,\mathcal{K}_\kappa}^2\hspace{-1mm}+
\norm{\widetilde{\mathcal{E}}\dot{\bm u}(t)}_{m,\mathcal{K}_\kappa}^2\hspace{-1mm}+
\norm{\widetilde{\mathcal{E}}{\bm u}(t)}^2_{m,\mathcal{K}_\kappa} \right)$
and hidden constant depending on the final time $T$ and the material properties, but independent of the discretization parameters.
\end{theorem}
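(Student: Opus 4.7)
The plan is to use a classical Galerkin splitting. Let $\UU_I \in C^1([0,T];\bm V_h)$ be the interpolant of $\UU$ provided by Lemma \ref{lemma:interp_estimate}, and decompose
$$\EE = (\UU-\UU_I) + (\UU_I-\UU_h) =: \eta + \xi_h,$$
with $\xi_h \in \bm V_h$. The strategy is to bound $\|\xi_h(t)\|_{\rm E}$ via the error equation \eqref{eq::error_eq} and then conclude by the triangle inequality together with the interpolation estimate \eqref{eq:interp_est} applied to $\eta$, $\dot\eta$, $\ddot\eta$.

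First, I would rewrite \eqref{eq::error_eq} as an equation for $\xi_h$, namely
$$\mathcal{M}(\ddot\xi_h,\VV_h)+\mathcal{D}(\dot\xi_h,\VV_h)+\mathcal{A}_h(\xi_h,\VV_h)+\mathcal{C}_h(\xi_h,\VV_h) = -\mathcal{M}(\ddot\eta,\VV_h)-\mathcal{D}(\dot\eta,\VV_h)-\mathcal{A}_h(\eta,\VV_h)-\mathcal{C}_h(\eta,\VV_h),$$
and test with $\VV_h = \dot\xi_h$. The left-hand side, upon time integration on $(0,t)$, assembles into the quantity
$$\tfrac12\bigl[\mathcal{M}(\dot\xi_h,\dot\xi_h)+\mathcal{A}_h(\xi_h,\xi_h)+\mathcal{C}_h(\xi_h,\xi_h)\bigr]\Big|_0^t + \int_0^t \mathcal{D}(\dot\xi_h,\dot\xi_h)\,ds,$$
which, thanks to Lemma \ref{lemma:stab} and Corollary \ref{cor:fund_cor}, is bounded from below by a constant times $\|\xi_h(t)\|_{\rm E}^2$ up to data at $t=0$. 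The right-hand side contributions involving $\mathcal{M}$ and $\mathcal{D}$ yield directly to Cauchy--Schwarz and Young's inequality since $\|\dot\xi_h\|_0$ is already part of the energy norm. The delicate point is that $\mathcal{A}_h(\eta,\dot\xi_h)+\mathcal{C}_h(\eta,\dot\xi_h)$ cannot be bounded through $|\dot\xi_h|_{\rm dG}$, which is not controlled by $\|\xi_h\|_{\rm E}$. The standard remedy is integration by parts in time:
$$\int_0^t\bigl[\mathcal{A}_h(\eta,\dot\xi_h)+\mathcal{C}_h(\eta,\dot\xi_h)\bigr]\,ds = \bigl[\mathcal{A}_h(\eta,\xi_h)+\mathcal{C}_h(\eta,\xi_h)\bigr]_0^t - \int_0^t\bigl[\mathcal{A}_h(\dot\eta,\xi_h)+\mathcal{C}_h(\dot\eta,\xi_h)\bigr]\,ds.$$
Each term on the right is now amenable to the continuity bound \eqref{eq::continuity_extended_A}, producing factors of the form $(\trinorm{\eta}_{\rm dG}+|\eta|_{\Gamma_I})(|\xi_h|_{\rm dG}+|\xi_h|_{\Gamma_I})$ and analogously for $\dot\eta$; the $\xi_h$ factors can be absorbed into the left-hand side via Young's inequality since they are controlled by $\|\xi_h\|_{\rm E}$.

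After this absorption, I would arrive at an inequality of the form
$$\|\xi_h(t)\|_{\rm E}^2 \lesssim \|\xi_h(0)\|_{\rm E}^2 + \trinorm{\eta(t)}_{\rm E}^2 + \int_0^t \bigl(\|\ddot\eta\|_0^2 + \|\dot\eta\|_0^2 + \trinorm{\dot\eta}_{\rm E}^2 + \|\xi_h\|_{\rm E}^2\bigr)\,ds,$$
to which Gronwall's lemma applies and yields a bound on $\max_{t}\|\xi_h(t)\|_{\rm E}^2$ solely in terms of the initial value $\|\xi_h(0)\|_{\rm E}$ (which, by choosing $\UU_I(0)$ as the elliptic/$L^2$ projection of $\UU_0$, enters with the same interpolation rates) and of the norms of $\eta,\dot\eta,\ddot\eta$. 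Finally, invoking Lemma \ref{lemma:interp_estimate} to estimate these $\eta$--quantities by the right-hand side of \eqref{eq:interp_est} applied successively to $\bm u$, $\dot{\bm u}$, $\ddot{\bm u}$, and combining with $\|\EE\|_{\rm E}\leq\|\eta\|_{\rm E}+\|\xi_h\|_{\rm E}$, produces the claimed estimate \eqref{eq:error_estimate}.

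The main obstacle I anticipate is twofold. The first is bookkeeping in the integration-by-parts step: one must verify that the eight coupling contributions inside $\mathcal{C}_h$ (the bilinear forms $\mathcal{A}^{e}_{\Gamma_I},\mathcal{A}^{p}_{\Gamma_I},\mathcal{B}^{pp}_{\Gamma_I},\mathcal{B}^{pf}_{\Gamma_I},\mathcal{B}^{fp}_{\Gamma_I},\mathcal{B}^{ff}_{\Gamma_I},\mathcal{C}^{ep}_{\Gamma_I},\mathcal{C}^{pe}_{\Gamma_I}$) are all simultaneously controlled by the augmented seminorm $\trinorm{\cdot}_{\rm dG}+|\cdot|_{\Gamma_I}$, so that \eqref{eq::continuity_extended_A} really covers all boundary and interface terms uniformly in $h$ and $p$. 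The second, more subtle, is recovering the slightly improved $p_{p,\kappa}^{2n-2}$ rate in the $\bm u_p$ component of \eqref{eq:error_estimate} as opposed to the $p_{p,\kappa}^{2n-3}$ rate coming from Lemma \ref{lemma:interp_estimate}: this seems to require a sharper estimate of the term $\mathcal{A}_h(\eta_p,\xi_h)$, presumably exploiting the fact that the normal trace of $\eta_p$ on interelement faces admits a better polynomial approximation bound than the full jump --- the exact origin of this half-power gain would need to be traced carefully through \cite[Lemmas 23 and 33]{CangianiDongGeorgoulisHouston_2017}.
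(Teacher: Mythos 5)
Your proposal is correct and follows essentially the same route as the paper: split the error into interpolation and discrete parts, test the error equation with the time derivative of the discrete part, move the time derivative off the test function via Leibniz's rule (your integration by parts in time), invoke the continuity bound \eqref{eq::continuity_extended_A} together with Corollary \ref{cor:fund_cor}, absorb, and conclude with Lemma \ref{lemma:interp_estimate}; the only cosmetic difference is that the paper replaces your Gronwall step by taking the maximum over $[0,T]$ and using $\int_0^T|f|\,ds\le T\max_t|f(t)|$ before absorbing with Young's inequality. Regarding your second anticipated obstacle: there is no half-power gain to recover --- the paper's proof simply applies Lemma \ref{lemma:interp_estimate} and therefore yields the exponent $p_{p,\kappa}^{2n-3}$ for the $\bm u_p$ contribution as well; the $p_{p,\kappa}^{2n-2}$ appearing in \eqref{eq:error_estimate} is an inconsistency (evidently a typo) in the stated theorem, not the product of a sharper trace estimate, so your argument as written already proves what the paper actually establishes.
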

\begin{proof}.
For any time $t\in (0,T]$, let  $\UU_I(t) = (\bm u_e,\bm u_p,\bm u_f)_I(t) \in\bm{V}_h $  be the interpolants defined in \eqref{lemma:interp_estimate}. We split the error as $\EE(t)={\EE}_I(t)-{\EE}_h(t)$, where 
\begin{align*}
{\EE}_I(t)&=(\UU-\UU_I)(t)=(\bm u_e-\bm u_{eI},\bm u_p-\bm u_{pI},\bm u_f-\bm u_{fI})(t),\\
{\EE}_h(t)&=(\UU_h-\UU_I)(t)=(\bm u_{eh}-\bm u_{eI},\bm u_{ph}-\bm u_{pI},\bm u_{fh}-\bm u_{fI})(t).
\end{align*}
From the triangle inequality,  we have
$
\norm{{\EE}(t)}_{ \rm{E}}^2\leq
\norm{{\EE}_h(t)}_{\rm{E}}^2+
\norm{{\EE}_I(t)}_{\rm{E}}^2,
$
and \eqref{lemma:interp_estimate} can be used to bound the term $\norm{{\EE}_I(t)}_{\rm{E}}$.  As for the term $\norm{{\EE}_h(t)}_{\rm{E}}$,
by taking $\VV_h = \dot{\EE}_h \in \bm{V}_h $ as test functions in \eqref{eq::error_eq} and collecting a first time derivative, we obtain
\begin{multline}\label{eq::errors_main}
\frac{1}{2}\frac{d}{dt}\left(\mathcal{M}(\dot{\EE}_h,\dot{\EE}_h) +
\mathcal{A}_h({\EE}_h,{\EE}_h)  + 
\mathcal{C}_{h}({\EE}_h,{\EE}_h)\right) + \mathcal{D} (\dot{\EE}_h,\dot{\EE}_h)= \mathcal{M}(\ddot{\EE}_I,\dot{\EE}_h) \\ 
+\mathcal{D} (\dot{\EE}_I,\dot{\EE}_h) 
-\mathcal{A}_h(\dot{{\EE}}_I,{\EE}_h)
- \mathcal{C}_{h}(\dot{\EE}_I,{\EE}_h)
+ \frac{d}{dt}\mathcal{A}_h({\EE}_I,{\EE}_h)
+\frac{d}{dt} \mathcal{C}_{h}({\EE}_I,{\EE}_h),
\end{multline}
where we have used Leibniz's rule on the term $\mathcal{A}_h(\EE_I,\dot{\EE}_h)$ and $\mathcal{C}_{h}(\EE_I,\dot{\EE}_h)$. 
Now, reasoning as in the proof of Theorem~\ref{teo:stability}, integrating  \eqref{eq::errors_main} between $0$ and $t\le T$, and assuming for simplicity that we can set the initial conditions of the semi-discrete problem so that 
$\dot{\EE}_h(0) = {\EE}_h(0) =\bm{0}$, we get
\begin{multline*}
\norm{{\EE}_h(t)}_{\rm E}^2 + \int_0^t
\mathcal{D} (\dot{\EE}_h,\dot{\EE}_h)(s)\,ds \lesssim  \mathcal{A}_h({\EE}_I,{\EE}_h)(t)
+ \mathcal{C}_{h}({\EE}_I,{\EE}_h)(t) \\
\qquad \qquad + \int_{0}^t\left(\mathcal{M}(\ddot{\EE}_I,\dot{\EE}_h) 
-\mathcal{A}_h(\dot{{\EE}}_I,{\EE}_h)
- \mathcal{C}_{\Gamma_I}(\dot{\EE}_I,{\EE}_h)
 +
\mathcal{D} (\dot{\EE}_I,\dot{\EE}_h)\right)(s)\,ds. 
\end{multline*}
Note that also $\mathcal{D}(\EE_h,\EE_h)(0) = 0$ under the assumption that $\UU_h(0)=\UU_I(0)$.
Then, we apply Cauchy-Schwarz and Young's inequality together with \eqref{eq::continuity_extended_A} to get 
\begin{align*}
\norm{{\EE}_h(t)}_{\rm E}^2  & \lesssim  \trinorm{\EE_I}^2_{\rm dG} + |\EE_I|^2_{\Gamma_I} + \int_0^t \mathcal{D}(\dot{\EE}_I,\dot{\EE}_I)(s)\,ds  \\ 
& + \int_{0}^t (\norm{\ddot{\EE}_I}_{0} +
\trinorm{\dot{\EE}_I}_{\rm dG} + |\dot{\EE}_I|_{\Gamma_I}) (\norm{\dot{\EE}_h}_{0} +
\|\EE_h\|_{\rm dG} + |\EE_h|_{\Gamma_I})(s)\,ds.
\end{align*}
We note that the first three addends can be bounded by $\trinorm{\EE_I(t)}_{\rm E}^2$. Finally, we take the maximum over $t\in[0,T]$, employ the Cauchy-Schwarz and Young's inequality, and observe that $\int_{0}^T |f(s)| ds \leq T \max_{t\in[0,T]} |f(t)|$ to infer 
\begin{align*}
\max_{0\leq t \leq T} \norm{{\EE}_h(t)}_{\rm E}^2  & \lesssim  \max_{0\leq t \leq T} \trinorm{\EE_I(t)}_{\rm E}^2 + T^2 
\max_{0\leq t \leq T} (\norm{\ddot{\EE}_I}_{0}^2 +
\trinorm{\EE_I}_{\rm dG}^2 + |\EE_I|^2_{\Gamma_I})(t) \\
& \lesssim \max_{0\leq t \leq T} \trinorm{\EE_I(t)}_{\rm E}^2 + T^2 \max_{0\leq t \leq T} \trinorm{\dot{\EE}_I(t)}_{\rm E}^2.
\end{align*}
We conclude by applying the estimates in~\eqref{lemma:interp_estimate}.
\end{proof}

\section{dG time discretization}\label{sec:time_integration}
It is known, in the literature, that problems of wave propagation in porous media present difficulties from the point of view of integration over time. This is because in the low-frequency range, as the one we are interested in, the evolution problem becomes stiff \cite{chiavassa_lombard_2013,delapuente2008}, and therefore implicit time integration schemes might be preferred to avoid stability constraint. 
In that respect, to integrate in time the second-order differential system in \eqref{eq::algebraic_compact} we adopt the scheme proposed 
in \cite{AntoniettiMiglioriniMazzieri2021} for the elastodynamics equations, which consists in applying a time dG method to the first-order system
\begin{equation}\label{eq::algebraic_1st}
\left[ \begin{matrix} 
I & 0 \\
0 & M \\
\end{matrix} \right]  
\left[ \begin{matrix} 
\dot{\bm \UU}_h \\
\dot{\bm \VV}_h
\end{matrix} \right] 
+
\left[ \begin{matrix} 
0 & -I \\
A+C & D \\
\end{matrix} \right]  
\left[ \begin{matrix} 
\bm \UU_h \\
\bm \VV_h 
\end{matrix} \right]  = 
\left[ \begin{matrix} 
\bm 0 \\
\bm {\FF}_h 
\end{matrix} \right].
\end{equation}
The latter can be rewritten in a compact form as 
\begin{equation}\label{eq::algebraic_compact_1st}
\widehat{M} \dot{\bm \ZZ}_h 
+ \widehat{K}  \bm \ZZ_h  = \widehat{{\bm \FF}}_h,
\end{equation}
by setting $\bm \ZZ_h = [\bm \UU_h, \bm \VV_h]^T$, and $\widehat{M}$ and $\widehat{K}$ defined as
\begin{equation*}
    \widehat{M} = \left[ \begin{matrix} 
I & 0 \\
0 & M \\
\end{matrix} \right], \quad {\rm and} \quad \widehat{K} = \left[ \begin{matrix} 
0 & -I \\
A+C & D \\
\end{matrix} \right].
\end{equation*}

 Remark that, with respect to the system introduced in \cite{AntoniettiMiglioriniMazzieri2021}, here the first equation of \eqref{eq::algebraic_1st} has not been multiplied by $A$ (that in this case is not positive definite).   
To discretize in time this system we partition the interval $I=(0,T]$ into $N_T$ time-slabs $I_n = (t_{n-1},t_n]$ having length $\Delta t_n = t_n-t_{n-1}$, for $n=1,\dots,N$ with $t_0 = 0$ and $t_N = T$. Then, we introduce the functional spaces
\begin{equation*}
	\label{Eq:L2Space}
	\bm{V}_{\Delta t_n}^{r_n} = \{ \bm \ZZ: I_n \rightarrow \mathbb{R}^{2\ndof} \text{ s.t. } \bm \ZZ \in[\mathcal{P}^{r_n}(I_n)]^{2\ndof}, r_n \geq 1 \},
\end{equation*}
and 
\begin{equation}
	\label{Eq:DGSpace}
	\bm{V}_{\Delta t}^{r}= \{ \bm \ZZ \in \bm L^2(0,T]  \text{ s.t. } \bm \ZZ|_{I_n} = [\bm \UU, \bm \VV]^T|_{I_n} \in \bm{V}_{\Delta t_n}^{r_n} \; \forall \,n = 1,...,N_T \}.
\end{equation} 
We use the notation $[\cdot]_n$ to denote the jump of $\bm \ZZ \in \bm{V}_{\Delta t}^{r}$ at time instant $t_n$, i.e.,  $[\bm \ZZ]_n = \bm \ZZ(t_n^+) - \bm \ZZ(t_n^-)$,  for  $n\ge 0$. With this notation, the time-dG formulation of \eqref{eq::algebraic_compact_1st} reads as: find $\bm \ZZ_{\rm dG}\in \bm{V}_{\Delta t}^{r}$ such that
\begin{equation}
	\label{Eq:WeakProblem}
	\mathcal{A}_{\rm T}(\bm \ZZ_{\rm dG},\bm \WW) = \mathcal{\bm {G}}(\bm \WW) \qquad \forall \; \bm \WW\in \bm{V}_{\Delta t}^{r},
\end{equation} 
where the bilinear form $\mathcal{A}_{\rm T}: \bm{V}_{\Delta t}^{r}\times \bm{V}_{\Delta t}^{r}\rightarrow\mathbb{R}$ is defined by
\begin{align}
	\label{Eq:BilinearForm}
	\mathcal{A}_{\rm T}(\bm \ZZ,\bm \WW) & = \sum_{n=1}^{N_T} (\widehat{M}\dot{\bm \ZZ},{\bm \WW})_{I_n} + 
	(\widehat{K}\bm \ZZ,{\bm \WW})_{I_n} + \sum_{n=1}^{{N_T}-1} \widehat{M} [\bm \ZZ]_n \cdot \bm \WW(t_n^+)+ \widehat{M} \bm \ZZ(0^+)\cdot\bm \WW(0^+), \nonumber
\end{align}
for all $\bm \ZZ, \bm \WW \in \bm{V}_{\Delta t}^{r}$. The linear functional $\mathcal{\bm G}:\bm L^2(0,T)\rightarrow\mathbb{R}$ is defined as 
\begin{equation}
	\label{Eq:LinearFunctional}
	\mathcal{\bm G}(\bm \WW) = \sum_{n=1}^{N_T} (\widehat{\bm \FF}_h, \bm \WW)_{I_n} +  \widehat{M}  \bm \ZZ_{0}\cdot{\bm \WW}(0^+),\;
\end{equation}
for any  $\bm \WW \in \bm{V}_{\Delta t}^{r}$, being $\bm \ZZ_{0} = [\bm \UU_{0h}, \bm \VV_{0h}]^T$.
In practice, to compute the discrete solution to \eqref{Eq:WeakProblem} we iterate over the time intervals, using as initial conditions for $I_{n+1}$ the trace in $t_n$ of the computed solution in the previous interval $I_n$. Hence, problem \eqref{Eq:WeakProblem} written for the generic time interval $I_n$ reduces to: find $\bm \ZZ_{dG}^n \in \bm V_{\Delta t_n}^{r_n}$ such that
\begin{multline}
	\label{Eq:WeakFormulationReduced}
	(\widehat{M} \dot{\bm \ZZ}_{\rm dG}^n,{\bm \WW})_{I_n} +  (\widehat{K} \bm \ZZ_{\rm dG}^n,{\bm \WW})_{I_n} + \widehat{M}  {\bm \ZZ}_{\rm dG}^n(t_{n-1}^+)\cdot {\bm \WW}(t_{n-1}^+)  = (\widehat{\bm \FF}_h,\bm \WW)_{I_n}  + \widehat{M} {\bm \ZZ}_{\rm dG}^{n-1}(t_{n-1}^-) \cdot {\bm \WW}(t_{n-1}^+).
\end{multline}
Focusing on the generic interval $I_n$, we introduce a basis $\{\psi^\ell(t)\}^{\ell =1,...,r_n +1}$ for the polynomial space  $\mathcal{P}^{r_n}(I_n)$ and set ${\dd} =  \ndof (r_n + 1)$ the dimension of the local finite  dimensional space $\bm V_{\Delta t_n}^{r_n}$. We also introduce the vectorial basis 
$\{ {\bm \psi}^{\ell}_i(t) \}^{\ell =1,...,r_n +1}_{i=1,...,2\ndof}$, where $\bm \psi_i^\ell$ is the $2\ndof$-dimensional vector whose $i$-th component is $\psi^\ell$ and the other components are zero.
Using the above notation, we can write the trial function \bm ${\ZZ}_{\rm dG}^n$ as a linear combination of the
basis functions, i.e, 
\begin{equation*}
    \bm \ZZ_{\rm dG}^n(t) = \sum_{j=1}^{2\ndof} \sum_{m=1}^{r_n+1} \alpha_j^m \bm \psi_{j}^m(t), 
\end{equation*}
where $\alpha_j^m \in \mathbb{R}$ for $j = 1, . . . , 2\ndof$ and $m = 1, . . . , r_n + 1$.
Next, we write equation \eqref{Eq:WeakFormulationReduced} for any test function $\bm \psi_{i}^\ell(t)$, $i = 1, . . . , 2\ndof$, $\ell= 1, . . . , r_n + 1$, obtaining the algebraic system of equations
\begin{equation*}
    \bm A^n \bm \alpha^n = \bm b^n, 
\end{equation*}
where on the interval $I_n$, $\bm \alpha^n \in \mathbb{R}^{2\dd}$ is the solution vector, $\bm b^n \in \mathbb{R}^{2\dd}$ corresponds to the data and is given componentwise as
\begin{align*}
   \bm b^n_k & =  (\widehat{\bm \FF}_h,\bm \psi_{i}^\ell)_{I_n} + \widehat{M} {\bm \ZZ}_{\rm dG}^{n-1}(t_{n-1}^-) {\bm \psi_{i}^\ell}(t_{n-1}^+), \quad n = 1,...,N_T, 
\end{align*}
for $k=\ell + (i-1)(r_n+1)$, and  $\ell= 1, . . . , r_n + 1$,  $i = 1, . . . , 2\ndof$. Note that we implicitly use the convention ${\bm \ZZ}_{\rm dG}^{n-1}(0^-) = \bm \ZZ_{0h}$.
The system matrix $\bm A^n \in \mathbb{R}^{2\dd \times 2\dd}$ has the following structure 
\begin{align*}
    \bm A^n & = (N_1 + N_3) \otimes \left[ \begin{matrix}
        I & 0 \\ 0 & M
    \end{matrix}
 \right]  + N_2 \otimes \left[ \begin{matrix} 
0 & -I \\
A+C & D \\
\end{matrix} \right],  \quad n = 1,...,N_T 
\end{align*}
where the time matrices $N_i$, $i=1,2,3$, are defined as follows 
\begin{align*}
    N_1^{\ell m} & = (\dot{\psi}^m,{\psi^\ell})_{I_n}, \quad N_2^{\ell m}  = ({\psi}^m,{\psi^\ell})_{I_n},  \quad N_3^{\ell m} = \psi^m(t_{n-1}^+)\psi^\ell(t_{n-1}^+), 
\end{align*}
for $m,\ell=1,..., r_n+1$. By defining the solution vector and the right-hand side $\bm \alpha^n = [\bm \alpha^n_u,\bm\alpha^n_v ]^T$ and $\bm b^n = [\bm b^n_u, \bm b^n_v]^T$, respectively, and using the properties of the Kronecker product we obtain
\begin{equation}\label{eq:system_xt}
\left(I_{n} \otimes \left[ \begin{matrix}
        I & 0 \\ 0 & M
    \end{matrix}
 \right]  + N_5 \otimes \left[ \begin{matrix} 
0 & -I \\
(A+C) &  D \\
\end{matrix} \right]\right)\left[ \begin{matrix} 
\bm \alpha^n_u \\
\bm \alpha^n_w\\
\end{matrix} \right] = N_4 \otimes 
\left[ \begin{matrix}
        I & 0 \\ 0 & I
    \end{matrix}
 \right]
\left[ \begin{matrix} 
\bm b^n_u \\
\bm b^n_w\\
\end{matrix} \right],   
\end{equation}
where $N_4 = (N_1+N_3)^{-1}$, $N_5=N_4N_2$, and $I_n$ is the identity matrix of order $r_n+1$. 
As noted in \cite[Remark 1]{Southworth2021} the above scheme is equivalent to an Implicit Runge-Kutta (IRK) method with $(r_n + 1)$-stages. 
As in this case, when the entries of the time matrices $N_i$, $i=1,..,3$, and of the right-hand side 
are computed through a Gauss-Legendre-Lobatto (GLL) quadrature formula having $r_n+1$ points and weights, and the basis functions $\psi^\ell$ are the characteristic polynomials associated with that points, one obtains the so-called IRK-Lobatto IIIC schemes (see, e.g. \cite[Section~11.8.3]{qss2007}). As a consequence, scheme \eqref{eq:system_xt} is $L$-stable, algebraically stable, and thus $B$-stable. Moreover, as observed in \cite{Jay2015}, it is perfectly suited for stiff problems.  
According to \cite{butcher2016} Lobatto IIIC methods with $r_n+1$ stages have a maximal order of convergence $2r_n-2$ when a scalar problem is taken into account. However, when the method is applied to a system of ordinary differential equations its order of accuracy can deteriorate 
as observed in \cite{qss2007,Southworth2021}.
In the case under consideration it holds 
\begin{equation*}
    \| \bm \ZZ_h(T^-) - \bm \ZZ_{\rm dG}(T^-)\|_2 \lesssim \Delta t^{r_n+1}, 
\end{equation*}
if the time slabs $I_n$ have all the same length $\Delta t$ for any $n=1,...,N$, cf. Section~\ref{sec:numerical_results}. The formal proof of this result is out of the scope of the paper and will be subject to future research.  

To numerically solve system \eqref{eq:system_xt} 
we apply a block Gaussian elimination getting
\begin{equation}\label{eq:system_xt_gauss}
 \left[ \begin{matrix} I_n \otimes
        I & -N_5 \otimes I\\ 0 & M_w 
    \end{matrix}
 \right] \left[ \begin{matrix} 
\bm \alpha^n_u \\
\bm \alpha^n_w\\
\end{matrix} \right] = 
\left[ \begin{matrix} 
(N_4
\otimes I) \bm b^n_u \\
\bm b^n_w - (N_6\otimes I) \bm b^n_u  \\
\end{matrix} \right],  
\end{equation}
with $M_w = (N_1 + N_3) \otimes M + N_2 \otimes D + N_7 \otimes (A+C)$
being $N_6 = N_2N_4$ and $N_7 = N_2N_4N_2$. Next, we compute $\bm \alpha^n_w$, by solving the linear system $M_w \bm \alpha^n_w = \bm b^n_w - (N_6\otimes I) \bm b^n_u$ and then we update $\bm \alpha^n_u$ by using the first equation.

\section{Numerical results}\label{sec:numerical_results}
In this section, we present numerical results concerning the verification of our scheme on problems with manufactured solutions and the application of the method to cases of geophysical interest. 
In all numerical tests, the penalty parameters $c_1$ and $c_2$ appearing in definitions \eqref{eq::stab_1}--\eqref{eq::stab_2}, respectively, have been chosen equal to 10.

\subsection{Verification test}\label{sec:verification_test}
We consider problem \eqref{eq::elasticity}--\eqref{eq::poroel} coupled with \eqref{eq::contstress_elporo}--\eqref{eq:noflowrate} in $\Omega = \Omega_e \cup \Omega_p$ with $\Omega_e = (0,1)\times(0,1)$ and $\Omega_p = (-1,0) \times (0,1)$. 
We choose the exact solution
\begin{equation}\label{eq:exact_sol_test_case1}
\bm u_e = \cos(4\pi t)\left[ \begin{matrix}
    x^2\sin(2\pi x) \\
    x^2\sin(4\pi x)
\end{matrix}
\right], \quad 
\bm u_p = \cos(\pi \sqrt{2}t )\left[ \begin{matrix}
    x^2\cos(\pi x/2)\sin(\pi x) \\
    x^2\cos(\pi x/2)\sin(\pi x)
\end{matrix}
\right], 
\end{equation}
and $\bm u_f = -\bm u_p$. Dirichlet boundary conditions, initial conditions, and the forcing term $\bm f_e, \bm f_p$, and $\bm g_p$ are set accordingly. For the interface conditions \eqref{eq::contstress_elporo}--\eqref{eq:noflowrate} we choose $\delta = 1$.
The model problem is solved on a sequence of polygonal meshes as the one shown in Figure~\ref{fig:mesh_poligonale}, with adimensional parameters reported in Table~\ref{tab::table_test_case_poroelastic}.

\begin{minipage}{0.5\textwidth}
\vspace{0.5cm}
\hspace{-0.5cm}\includegraphics[width=\textwidth]{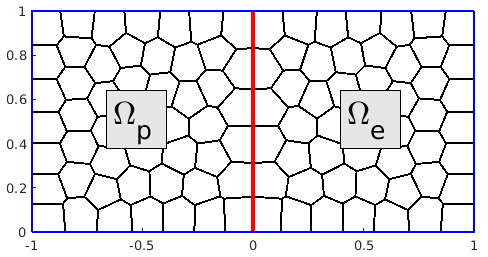}
\captionof{figure}{Test case of Section~ \ref{sec:verification_test}. Example of the computational domain having 100 polygonal elements. The red line represents the interface $\Gamma_I$.}
\label{fig:mesh_poligonale}
\end{minipage}
\hspace{0.05\textwidth}
\begin{minipage}{0.35\textwidth}
\begin{tabular}{llll}
& &  $\Omega_p$ & $\Omega_e$  \\
\hline
\textbf{Fluid}  & $\rho_f$    & 1 & --                 \\ 
& $\eta$      & 1  & --                 \\ \hline
\textbf{Grain}      & $\rho_s$ ($\rho_e$)    & 1 & 1              \\
   & $\mu$       & 1 & 1    \\ 
    & $\zeta$       & 1 & 1    \\ \hline
\textbf{Matrix}       & $\phi$      & 0.5 & --              \\
      & $a$         & 1 & --     \\
     & $k$         & 1 & -- \\
 & $\lambda$ & 1 & 2     \\
 & $m$         & 1 & --    \\
 & $\beta$     & 1 & -- \\  \hline
\end{tabular}
\captionof{table}{Parameters employed for the test case of Section~\ref{sec:verification_test}.}\label{tab::table_test_case_poroelastic}
\end{minipage}

As a first test, we set the final time equal to $1$ and consider, for the time integration scheme in Section~\ref{sec:time_integration}, a timestep $\Delta t_n = \Delta t = 10^{-3}$ and a polynomial degree $r_n=r=1$, for any $n=1,..., 1000$.
In Figure~\ref{fig:conv_test_space} (left),  we report the computed energy error $\|\UU -\UU_h\|_{\rm E}$ at the final time $T$, cf. \eqref{eq::norm_def1} as a function of the mesh size $h$ for a polynomial degree $p_e = p_p = 2, 3, 4$. In this case, we retrieve the rate of convergence $\mathcal{O}(h^p)$ as proved in \eqref{eq:error_estimate}. In Figure~\ref{fig:conv_test_space} (right) we report the same results as before as a function of the polynomial degree $p=p_p=p_e$ obtained by fixing the number of grid elements $N_{el}=100$ and considering $\Delta t_n = \Delta t =5.e-4$ and $r_n=r=1$ for $n=1,...,2000$. 
Notice that this latter case is not covered by our theoretical analysis, nevertheless, we observe numerically optimal convergence. 

On the same numerical example, we compute numerically the $L^2$-norm of the error, i.e., $\| \bm \UU - \bm \UU_h \|_0$, as a function of the time step $\Delta t$, by fixing a polygonal mesh of $N_{el}=100$ elements and the polynomial degree $p=p_p=p_e=7$. We compute the error at the final time $T=1$ by choosing different polynomial degrees $r=1,2,3$. As it can be seen from Figure~\ref{fig:conv_test_dt} the estimated order of convergence is $\mathcal{O}(\Delta t^r)$. Although this is only numerical evidence, it shows that the considered dG method outperforms classical methods such as the Newmark scheme, which is still widely used for wave propagation problems, see e.g., \cite{AntoniettiMazzieriNatipoltri2021}. 
\begin{figure}[ht]
%
%
\begin{tikzpicture}

\begin{axis}[%
width=0.35\textwidth,
height=0.35\textwidth,
scale only axis,
xmode=log,
xmin=0.1,
xmax=0.4,
xminorticks=true,
xlabel style={font=\color{black}},
xlabel={$h$},
ymode=log,
ymin=0.005,
ymax=10,
yminorticks=true,
ylabel style={font=\color{black}},
ylabel={$\| \bm \UU - \bm \UU_h \|_{\rm E}$},
axis background/.style={fill=white},
xmajorgrids,
xminorgrids,
ymajorgrids,
yminorgrids,
legend style={at={(0.63,0.1)}, anchor=south west, legend cell align=left, align=left, draw=black}
]
\addplot [color=blue, line width=2.0pt, mark=asterisk, mark options={solid, blue}]
  table[row sep=crcr]{%
0.324044282389075	3.500122009837051e+00\\
0.249512906100889	2.864065324925036e+00\\
0.178218721248557	1.532951587515947e+00\\
0.12716409863985	7.783025248444815e-01\\
};
\addlegendentry{$p=2$}

\addplot [color=red, line width=2.0pt, mark=star, mark options={solid, red}]
  table[row sep=crcr]{%
0.324044282389075	2.532885618327975e+00\\
0.249512906100889	7.753350377418354e-01\\
0.178218721248557	2.885689865253659e-01\\
0.12716409863985	1.029240773482629e-01\\
};
\addlegendentry{$p=3$}

\addplot [color=green, line width=2.0pt, mark=square, mark options={solid, green}]
  table[row sep=crcr]{%
0.324044282389075	2.440166288155848e-01\\
0.249512906100889	1.108858631372115e-01\\
0.178218721248557	2.961244391469967e-02\\
0.12716409863985	8.764810870650484e-03\\
};
\addlegendentry{$p=4$}

\addplot [color=black, forget plot]
  table[row sep=crcr]{%
0.178218721248557	2.17619126034708\\
0.12716409863985	1.21707079828856\\
};

\addplot [color=black, forget plot]
  table[row sep=crcr]{%
0.178218721248557	2.17619126034708\\
0.12716409863985	2.17619126034708\\
};
\addplot [color=black, forget plot]
  table[row sep=crcr]{%
0.12716409863985	2.17619126034708\\
0.12716409863985	1.21707079828856\\
};

\node[right, align=left, text=black, font=\normalsize]
at (axis cs:0.115,1.6) {$2$};

\addplot [color=black, forget plot]
  table[row sep=crcr]{%
0.178218721248557	0.424542558644923\\
0.12716409863985	0.15422501287589\\
};
\addplot [color=black, forget plot]
  table[row sep=crcr]{%
0.178218721248557	0.424542558644923\\
0.12716409863985	0.424542558644923\\
};
\addplot [color=black, forget plot]
  table[row sep=crcr]{%
0.12716409863985	0.424542558644923\\
0.12716409863985	0.15422501287589\\
};

\node[right, align=left, text=black, font=\normalsize]
at (axis cs:0.115,0.25) {$3$};

\addplot [color=black, forget plot]
  table[row sep=crcr]{%
0.178218721248557	0.0504409546115258\\
0.12716409863985	0.0130745898333879\\
};
\addplot [color=black, forget plot]
  table[row sep=crcr]{%
0.178218721248557	0.0504409546115258\\
0.12716409863985	0.0504409546115258\\
};
\addplot [color=black, forget plot]
  table[row sep=crcr]{%
0.12716409863985	0.0504409546115258\\
0.12716409863985	0.0130745898333879\\
};

\node[right, align=left, text=black, font=\normalsize] 
at (axis cs:0.115,0.025) {$4$};

\end{axis}

\end{tikzpicture}%
%
%
\begin{tikzpicture}

\begin{axis}[%
width=0.35\textwidth,
height=0.35\textwidth,
scale only axis,
scale only axis,
xmin=1,
xmax=5,
xlabel style={font=\color{black}},
xlabel={$p=p_p=p_e$},
ymode=log,
ymin=0.005,
ymax=10,
yminorticks=true,
ylabel style={font=\color{black}},
ylabel={$\| \bm \UU - \bm \UU_h \|_{\rm E}$},
axis background/.style={fill=white},
xmajorgrids,
ymajorgrids,
yminorgrids
]
\addplot [color=blue, line width=2.0pt, mark=asterisk, mark options={solid, blue}, forget plot]
  table[row sep=crcr]{%
1	9.450073931402679e+00\\
2	2.988659507661924e+00\\
3	7.863233811892762e-01\\
4	1.123760677302777e-01\\
5	1.551172958514478e-02\\
};
\end{axis}

\end{tikzpicture}%
\caption{Test case of Section~\ref{sec:verification_test}. 
Left: computed energy-errors $\| \bm \UU - \bm \UU_h\|_{\rm E}$ at $T=1$ in logarithmic scale as a function of the mesh size $h$ for different polynomial degrees $p=p_e=p_p$ and  fixing the time step $\Delta t=0.001$ and  the time polynomial degree $r=1$. The rate of convergence is in agreement with the theoretical estimates in \eqref{eq:error_estimate}.
Right: computed energy-errors $\| \bm \UU - \bm \UU_h\|_{\rm E}$ at $T=1$ in semilogarithmic scale as a function of the polynomial degree $p=p_p=p_e$ by fixing the number of elements $N_{el}=100$, the time step $\Delta t=5.e-4$, and the time polynomial degree $r=1$.
}\label{fig:conv_test_space}
\end{figure}
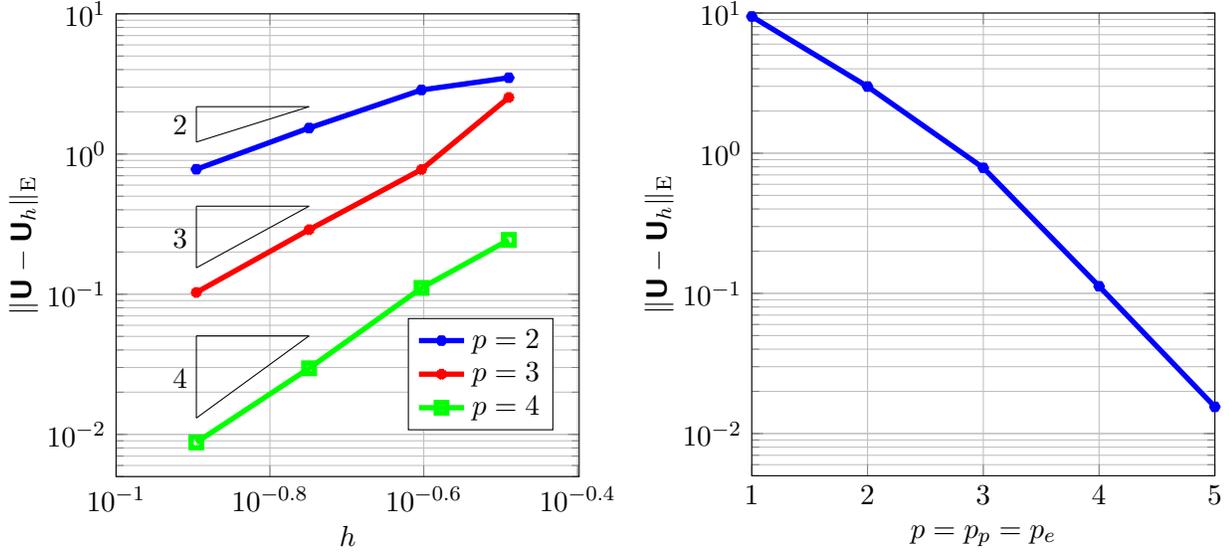
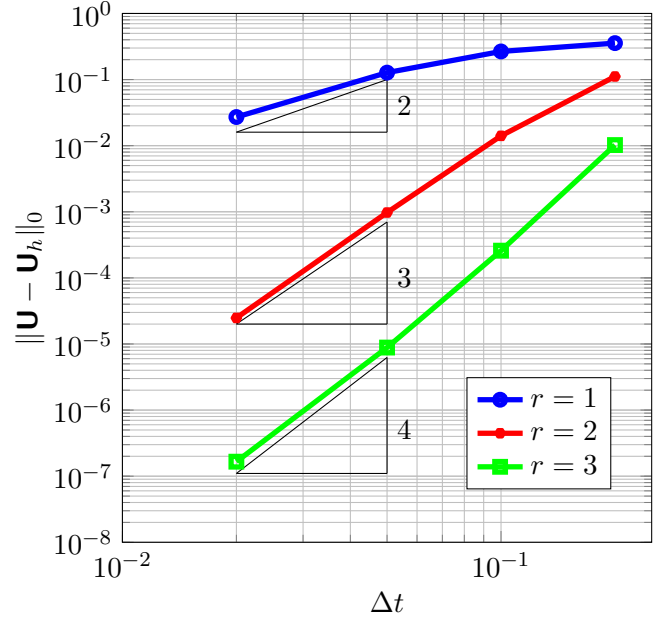
\begin{figure}[h!]
\centering
%
%
\begin{tikzpicture}

\begin{axis}[%
width=0.4\textwidth,
height=0.4\textwidth,
scale only axis,
xmode=log,
xmin=0.01,
xmax=0.25,
xminorticks=true,
xlabel style={font=\color{black}},
xlabel={$\Delta t $},
ymode=log,
ymin=1e-08,
ymax=1,
yminorticks=true,
ylabel style={font=\color{black}},
ylabel={$\| \bm \UU - \bm \UU_h \|_{0}$},
axis background/.style={fill=white},
xmajorgrids,
xminorgrids,
ymajorgrids,
yminorgrids,
legend style={at={(0.65,0.1)}, anchor=south west, legend cell align=left, align=left, draw=black}
]
\addplot [color=blue, line width=2.0pt, mark=o, mark options={solid, blue}]
  table[row sep=crcr]{%
0.2	3.559324637021170e-01\\
0.1	2.662384016385438e-01\\
0.05	1.273617217387445e-01\\
0.02	2.711058921788446e-02\\
};
\addlegendentry{$r=1$}

\addplot [color=red, line width=2.0pt, mark=asterisk, mark options={solid, red}]
  table[row sep=crcr]{%
0.2	1.113419655045532e-01\\
0.1	1.408302506655934e-02\\
0.05	9.739505804510285e-04\\
0.02	2.477955516645199e-05\\
};
\addlegendentry{$r=2$}

\addplot [color=green, line width=2.0pt, mark=square, mark options={solid, green}]
  table[row sep=crcr]{%
0.2	1.021601755762847e-02\\
0.1	2.581670683458181e-04\\
0.05	8.828227698246276e-06\\
0.02	1.657662810791810e-07\\
};
\addlegendentry{$r=3$}

\addplot [color=black, forget plot]
  table[row sep=crcr]{%
0.05	0.1\\
0.02	0.016\\
};
\addplot [color=black, forget plot]
  table[row sep=crcr]{%
0.05	0.016\\
0.02	0.016\\
};
\addplot [color=black, forget plot]
  table[row sep=crcr]{%
0.05	0.1\\
0.05	0.016\\
};

\addplot [color=black, forget plot]
  table[row sep=crcr]{%
0.05	0.0007\\
0.02	0.00002\\
};
\addplot [color=black, forget plot]
  table[row sep=crcr]{%
0.05	0.00002\\
0.02	0.00002\\
};
\addplot [color=black, forget plot]
  table[row sep=crcr]{%
0.05	0.0007\\
0.05	0.00002\\
};

\addplot [color=black, forget plot]
  table[row sep=crcr]{%
0.05	6.25e-06\\
0.02	1.1e-07\\
};
\addplot [color=black, forget plot]
  table[row sep=crcr]{%
0.05	1.1e-07\\
0.02	1.1e-07\\
};
\addplot [color=black, forget plot]
  table[row sep=crcr]{%
0.05	6.25e-06\\
0.05	1.1e-07\\
};

\node[right, align=left, text=black, font=\normalsize]
at (axis cs:0.05,0.04) {$2$};

\node[right, align=left, text=black, font=\normalsize]
at (axis cs:0.05,0.0001) {$3$};

\node[right, align=left, text=black, font=\normalsize]
at (axis cs:0.05,0.0000005) {$4$};

\end{axis}

\end{tikzpicture}%
\caption{Test case of Section~\ref{sec:verification_test}. 
Computed $L^2$-errors $\| \bm \UU - \bm \UU_h\|_{0}$ at $T=1$ in logarithmic scale as a function of the time step $\Delta t$ for different polynomial degrees $r=1,2,3$ in time. We set $N_{el}=100$ polygonal elements and a space polynomial degree $p=p_p=p_e=7$. 
}\label{fig:conv_test_dt}
\end{figure}
Next, in Table~\ref{tab::table_hdt_conv_p1} we report the computed $L^2$-error $\| \bm \UU - \bm \UU_h\|_{0}$ as a function of the discretization parameters. In particular, we fix the polynomial degree for both space and time variables and we let $N_{el}$ and $\Delta t$ vary.   
It is possible to notice that the spatial discretization error is dominant since we obtain an almost constant value for each row of  Table~\ref{tab::table_hdt_conv_p1}. 
It is interesting to analyze these results in connection with the condition number of the system matrix $M_w$, cf. Figure~\ref{fig:cond_number} (first row)  and the computational time 
spent for the single run, cf. Figure~\ref{fig:cond_number} (second row). First of all, we can observe that the condition number of the system matrix increases by one order whenever the polynomial degree increases by one. Moreover, when fixing the polynomial degree, the matrix $M_w$ is better conditioned for a smaller value of $\Delta t$, see Figure~\ref{fig:cond_number} (first row).  
Concerning the computational cost, it is obvious that this is proportional to the dimension of the system matrix. Looking at the plot in Figure~\ref{fig:cond_number} (second row) one can observe that even if different combinations of discretization parameters can lead to the same amount of time spent for a single simulation, they do not provide the same level of accuracy. Indeed, the sets $(p=r=3, \Delta t=0.01$, $N_{el}=400)$ and $(p=r=4, \Delta t=0.02$, $N_{el}=200)$ are equivalent from the point of view of the computational cost ($1389~s$ and $1130~s$, respectively) but, with the first set we obtain an $L^2-$error equal to $1.7814e$-04, while with the second we get $5.6899e$-05.

\begin{table}
\centering
\begin{tabular}{|c|c|c|c|c|c|}
\cline{1-6}
& $N_{el}$ $\backslash$ $\Delta t$ & 0.1 & 0.05 & 0.02 & 0.01 \\
\cline{1-6}
\multirow{4}{*}{$p=r=1$}& 50  & 2.5734e-01  & 2.3169e-01 &  2.1453e-01 &  2.0908e-01 \\
 & 100  & 2.4613e-01  & 2.0702e-01 &  1.9284e-01 &  1.9759e-01 \\
 & 200  & 2.4419e-01  & 1.9706e-01 &  1.8611e-01 & 1.8736e-01 \\
 & 400 & 2.5188e-01  & 2.0336e-01 &  1.7338e-01 &  1.6802e-01 \\
\cline{1-6} 
 \multirow{4}{*}{$p=r=2$}& 50 & 4.8538e-02  &  4.8105e-02  & 4.8164e-02  & 4.8125e-02 \\
 & 100 &  5.1387e-02  &  4.7603e-02 &  4.7353e-02 &  4.7406e-02 \\
 & 200 &  3.2140e-02  & 2.7143e-02 &  2.6818e-02  & 2.6841e-02 \\
&  400 &  1.5432e-02  & 8.3925e-03 &  8.0168e-03  & 8.0096e-03 \\
 \cline{1-6} 
 \multirow{4}{*}{$p=r=3$}& 50 &  1.5983e-02 &  1.5933e-02  & 1.5947e-02  & 1.5943e-02 \\
 & 100 &  3.1176e-03 &  3.0956e-03 &  3.1093e-03 &  3.1157e-03 \\
& 200 &  8.7709e-04  & 8.4075e-04 &  8.4631e-04 &  8.5067e-04 \\
& 400 &  2.7540e-04  & 1.7519e-04 &  1.7682e-04 &  1.7814e-04 \\
\cline{1-6}
\multirow{4}{*}{$p=r=4$}& 50 &   9.1399e-04  & 9.2689e-04 &  9.3522e-04  & 9.3748e-04 \\ & 100 &  2.8139e-04  & 2.8182e-04  & 2.8601e-04 &
 2.8739e-04 \\
  & 200 &   5.6281e-05 &  5.6095e-05 &  5.6899e-05 &  5.7177e-05 \\
  & 400 & 1.3301e-05  & 1.0781e-05 &  1.0890e-05  & 1.1072e-05 \\
\cline{1-6}
\end{tabular}
\caption{Test case of Section~\ref{sec:verification_test}. Computed $L^2$-errors $\| \bm \UU - \bm \UU_h\|_{0}$ as function of the discretization parameters. }\label{tab::table_hdt_conv_p1}
\end{table}

\begin{figure}[h!]
\hspace{-1.2cm}\includegraphics[width=1.\textwidth]{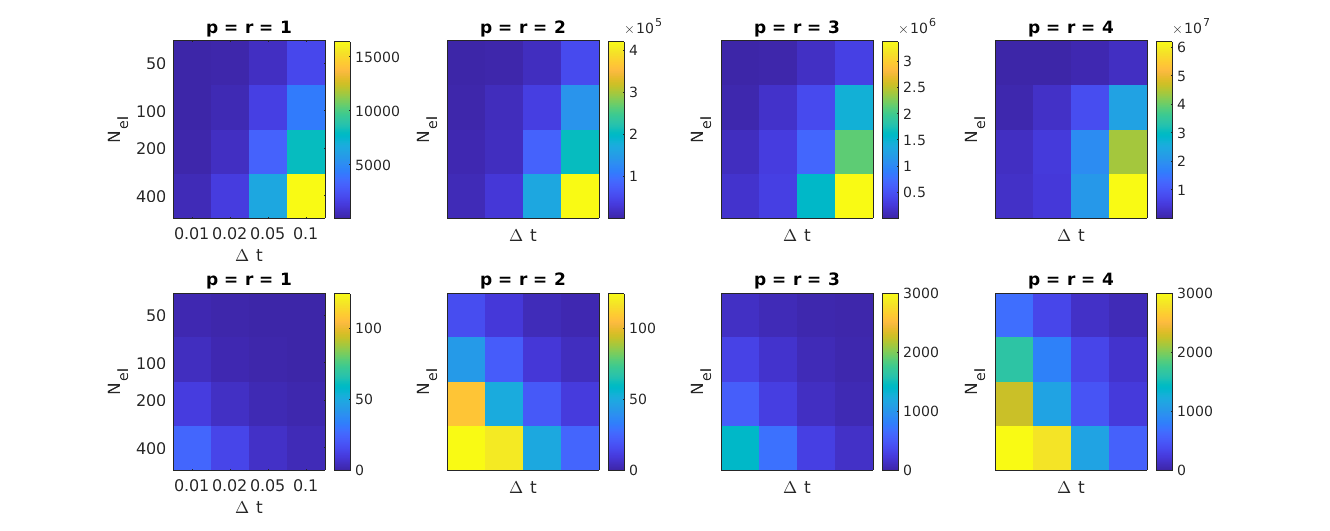}
\caption{Test case of Section~\ref{sec:verification_test}. 
First row: computed condition number (\texttt{condest} function in Matlab) as a function of the discretization parameters. 
Second row: computational time employed for a single run as a function of the discretization parameters. 
}\label{fig:cond_number}
\end{figure}

\subsection{Wave propagation in a two layer medium}\label{sec:layered_media}

Inspired by \cite{Morency2008}, we consider a wave propagation problem in a two-layered medium. 
The computational domain $\Omega = (0, 4800~m)^2$ and consists of two layers as shown in Figure~\ref{fig:domain_layered_media} (left). We assume the upper layer to be  a poroelastic material while the lower layer to be an elastic medium, cf. Table~\ref{tab::table_poroelastic}.

\begin{figure}[h!]
\includegraphics[width=0.45\textwidth]{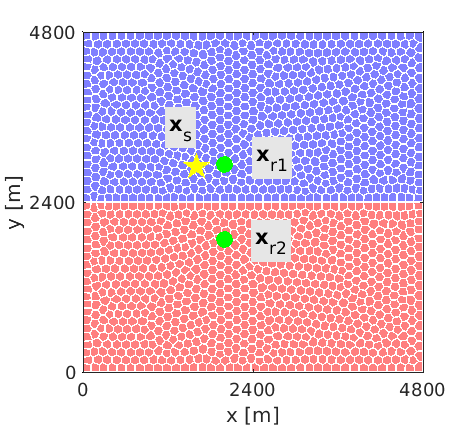}\hspace{5mm}
\includegraphics[width=0.45\textwidth]{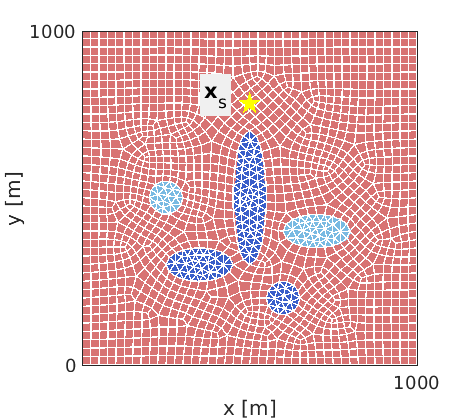} 
\caption{Left: computational domain for the test case of Section~\ref{sec:layered_media}.
Right: computational domain for the test case of Section~\ref{sec:complex_media}. Yellow stars denote the nucleation point $\bm x_s$, while green dots denote the position of the receivers $\bm x_{r1}$ and $\bm x_{r2}$.
}\label{fig:domain_layered_media}
\end{figure}

\begin{table}[htbp]
\begin{tabular}{lllllll}
& & & Proelastic  Layer & & \\
\hline 
\textbf{Fluid}  & Fluid density      & $\rho_f$     & 950                 & $\rm kg/m^3$    &  \\
& Dynamic viscosity  & $\eta$        & 0 (0.0015)                   & $\rm Pa\cdot s$ &  \\ \hline
\textbf{Grain}  & Solid density      & $\rho_s$     & 2200                 & $\rm kg/m^3$    &  \\
& Shear modulus      & $\mu$        & 4.3738 $\cdot 10^9$     & $\rm Pa$        &  \\ \hline
\textbf{Matrix} & Porosity           & $\phi$       & 0.4                 &             &  \\
& Tortuosity         & $a$         & 2                  &             &  \\
& Permeability       & $k$          & $1\cdot 10^{-12}$ & $\rm m^2$       &  \\
& Lam\'e coefficient   & $\lambda$  & 7.2073$\cdot 10^9$    & $\rm Pa$        &  \\
& Biot's coefficient & $m$          & 6.8386$\cdot 10^9$   & $\rm Pa$        &  \\
& Biot's coefficient & $\beta$      & 0.0290                 &             &  \\  
 & Damping coefficient   & $\zeta$ & 0 (0.01)    & $\rm s^{-1}$        &  \\
\hline \\
& & & Elastic  Layer & & \\
\hline
 \textbf{Matrix} & Solid density      & $\rho$    & 2650                 & $\rm kg/m^3$    &  \\
& Shear modulus      & $\mu$       & 1.5038 $\cdot 10^9$    & $\rm Pa$        &  \\
 & Lam\'e coefficient   & $\lambda$ & 1.8121$\cdot 10^9$   & $\rm Pa$        &  \\
 & Damping coefficient   & $\zeta$ & 0 (0.01)    & $\rm s^{-1}$        &  \\
  \hline
\end{tabular}
\caption{Test case of Section~\ref{sec:layered_media}. Physical parameters for the layered media.}
\label{tab::table_poroelastic}
\end{table}

An explosive source is located in the upper layer at $\bm x_s = (1600, 2900)~m$ whose expression is given by
\begin{equation}\label{eq:source_time}
    \bm f_p = \bm g_p = - M \cdot \nabla \delta (\bm x - \bm x_s)S(t),
\end{equation}
being $M=M_0 I$ the moment tensor with $M_0>0$, $\delta(\bm x - \bm x_s)$ is the Dirac delta distribution centered in $\bm x_s$ and $S(t)$ is the source time function. This is a classical choice in the context of earthquake simulation, cf. \cite{Morency2008}.
We consider as a time evolution for $S(t)$ in \eqref{eq:source_time} a Ricker-wavelet 
\begin{equation}\label{eq:Ricker}
    S(t) = (1-2 \beta_p (t-t_0)^2) e^{ -\beta_p (t-t_0)^2}, \quad \beta_p = \pi^2 f_p^2,
\end{equation}
with time-shift $t_0 = 0.3~s$ and peak-frequency $f_p = 5~ Hz$. Finally, we set $M_0 = 1~Nm$.
We use a shape-regular polygonal mesh with characteristic size $h = 100$ and a polynomial degree $p = 3$ for space discretization. For time integration, we set $\Delta t = 0.01$, a polynomial degree $r=2$ and we fix the final time $T=1.5~s$.
For this model, we consider interface  conditions \eqref{eq::contstress_elporo}--\eqref{eq:noflowrate} with $\delta = 1$, free surface boundary conditions on the top boundary, i.e. $\bm \sigma_p \bm n_p = \bm 0$, and absorbing boundary conditions on the remaining part of the boundary to avoid artificial reflections and simulate an unbounded medium.
In particular, we use the classical first-order paraxial approximations proposed in \cite{Stacey} for the elastic boundary $\Gamma_e$, i.e., 
\begin{align}\label{eq:abso_el}
\bm\sigma_e \bm n_e  &= \rho_(c_p - c_s)(\partial_t \bm u_e \cdot \bm n_e)\bm n_e + \rho_e c_s \partial_t \bm u_e,
\end{align}
and the ones proposed in  \cite{Morency2008} for the poroelastic boundary $\Gamma_p$, i.e.,
\begin{align}\label{eq:abso_poro}
\bm\sigma_p \bm n_p & = \rho_p c_{pI} (\partial_t \bm u_p \cdot \bm n_p) \bm n_p + \rho_f c_{pII} (\partial_t \bm u_f \cdot \bm n_p) \bm n_p  + ( \rho_p - \rho_f \phi/a)c_s (I -\bm n_p \bm n_p) \cdot \partial_t \bm u_p,\nonumber \\
-p \bm n_p & =\rho_f a/\phi c_{pII} (\partial_t \bm u_f \cdot \bm n_p) \bm n_p + \rho_f c_{pI} (\partial_t \bm u_p \cdot \bm n_p) \bm n_p. 
\end{align}
In \eqref{eq:abso_el} $c_p = \sqrt{\frac{\lambda +2 \mu}{\rho_e}}$ and $c_s = \sqrt{\frac{\mu}{\rho_e}}$ are the compressional and shear wave velocities, respectively, while in  \eqref{eq:abso_poro} $c_{pI}$ and $c_{pII}$ are the fast and slow compressional wave velocities, respectively, defined as  $ c_{pI} = \max \sqrt{\Lambda}$ and $  c_{pII} = \min \sqrt{\Lambda}$,  where $\Lambda$ are the solutions of the generalized eigenvalue problem $A \bm v = \Lambda B \bm v$ where
\begin{equation*}
    A = \left[ \begin{matrix}
        \rho_p & \rho_f \\ \rho_f & \rho_w
    \end{matrix} \right], \quad B = \left[ \begin{matrix}
        \lambda + 2 \mu + m\beta^2 & m\beta \\ m\beta & m
    \end{matrix} \right].
\end{equation*}

In Figure~\ref{fig:snap_tromp}, we report some snapshots of the vertical component velocity $(\dot{\bm u}_p,\dot{\bm u}_e)_y$ obtained by 
neglecting or considering viscous effects in the model. In particular, we consider the model $TC_1$ where $\eta=\zeta = 0$ (Figure~\ref{fig:snap_tromp}-first row), and the model $TC_2$ where we set $\eta=0.0015$ and  $\zeta=0.01$ (Figure~\ref{fig:snap_tromp}-second row). The results of $TC_1$ are in agreement with the ones presented in \cite{Morency2008,Peng2020}.  Indeed, from Figure~\ref{fig:snap_tromp} (first row), we clearly observe: (i) the continuity of the velocity field across the interface $\Gamma_I=(0,4800)~m\times\{2400\}~m$; (ii) the propagation of the direct fast $c_{pI}$  compressional wave (first front), the reflected fast $c_{pI}$ compressional wave (second front), the reflected shear $c_s$ (third front) and slow compressional $c_{pII}$ wave (fourth front) in the upper poroelastic layer; (iii) the transmitted compressional $c_p$ and shear $c_s$ wave in the lower elastic layer. 
The same physical phenomena, although less evident, are present when the viscous damping terms are introduced in $TC_2$, cf. Figure~\ref{fig:snap_tromp}(second row). In this case, the transmitted shear $c_s$ and the reflected slow longitudinal $c_{PII}$ waves are too weak to be visible at this scale. 
From the plot, we can also see the effect of the first-order absorbing boundaries, which are not perfectly transparent. Recent studies have adopted the more efficient perfectly matched layer (PML) methodology, e.g. \cite{Martin2008,HE2019116}. The PML implementation is not included in this paper but it will be addressed in the next future.

\begin{figure}[h!]
\includegraphics[width=1\textwidth]{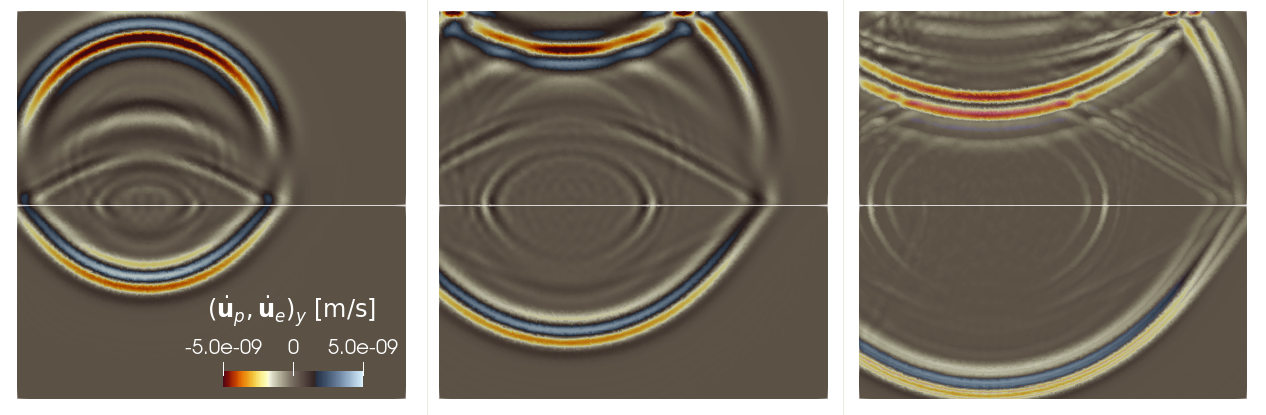}
\includegraphics[width=1\textwidth]{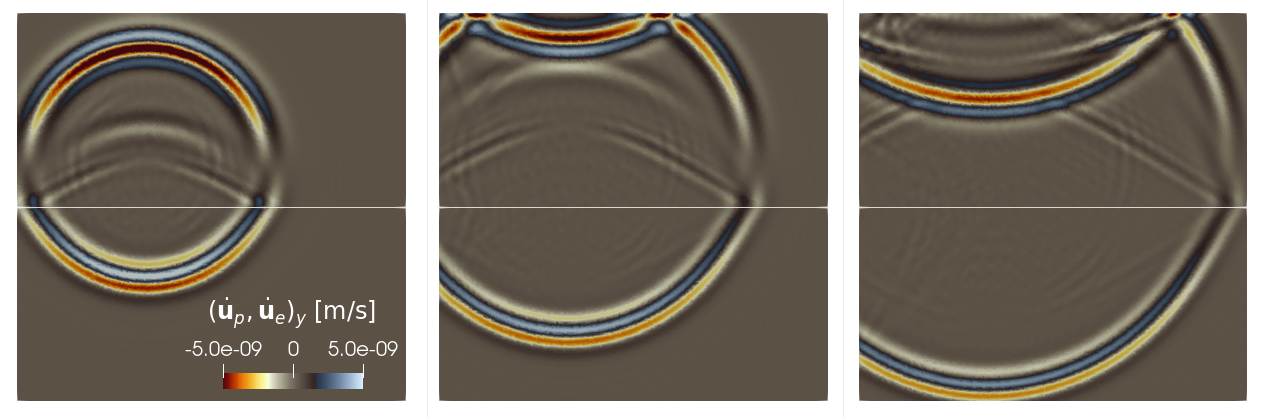} 
\caption{Test case of Section~\ref{sec:layered_media}. 
Snapshots of the vertical component velocity $(\dot{\bm u}_p, \dot{\bm u}_e)_y$ at different time instants: $t=0.9s$ (left), $t=1.2s$ (center), and $t=1.5s$ (right). The top and bottom rows refer to the model $TC_1$ and $TC_2$, respectively. 
}\label{fig:snap_tromp}
\end{figure}

In Figure~\ref{fig:th_Tromp} we compare the 
the time histories velocities $(\dot{\bm u}_p,\dot{\bm u}_e)$ at the two receivers $\bm x_{r1} = (2000, 2934)~m$ and at $\bm x_{r2} = (2000, 1867)~m$ 
for the different modeling assumptions. 
Again here we notice the effect of the damping assumption on the scattered wave field.

\begin{figure}[h!]
\includegraphics[width=1\textwidth]{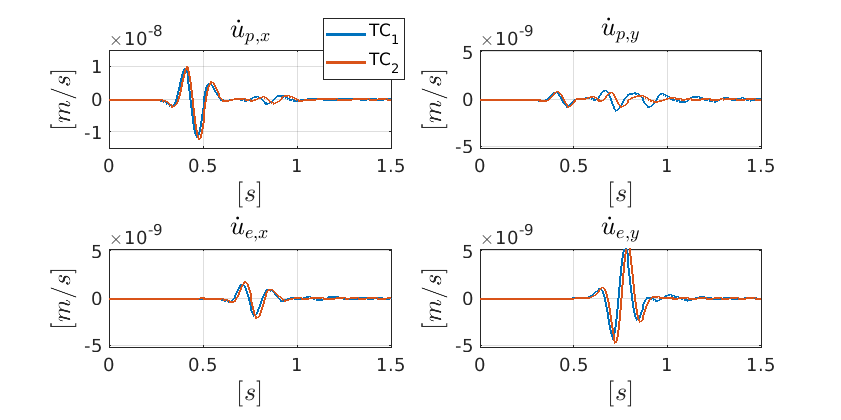}
\caption{Test case of Section~\ref{sec:layered_media}. 
Horizontal and vertical component velocity $\dot{\bm u}_p$ (top) and  $\dot{\bm u}_e$ (bottom) for the receivers $\bm x_{r1}$ (top) and $\bm x_{r2}$ (bottom), respectively.
}\label{fig:th_Tromp}
\end{figure}

\subsection{Wave propagation in an elastic domain with poroelastic inclusions}\label{sec:complex_media}

To demonstrate the feasibility of tackling a more complex model with the proposed method, we consider a coupled elastic-poroelastic model shown in Figure~\ref{fig:domain_layered_media} (right).
The background media is regarded as a perfectly elastic medium (whose mechanical properties are listed in Table~\ref{tab::table_poroelastic}) while the circle and ellipses with different sizes represent oil and gas reservoirs, which are modeled as poroelastic media. This example takes inspiration from the one proposed in \cite{Zhang2019}. The poroelastic domains have different material properties: for the domains depicted in light blue in Figure~\ref{fig:domain_layered_media} (right) we consider the values in Table~\ref{tab::table_poroelastic} while for the remaining ones, we use the ones in Table~\ref{tab::table_poroelastic_2}.

\begin{table}[htbp]
\centering
\begin{tabular}{lllllll}
& & & Proelastic  Layer & & \\
\hline
\textbf{Fluid}  & Fluid density      & $\rho_f$     & 750                 & $\rm kg/m^3$    &  \\
& Dynamic viscosity  & $\eta$        & 0                    & $\rm Pa\cdot s$ &  \\ \hline
\textbf{Grain}  & Solid density      & $\rho_s$     & 2650                 & $\rm kg/m^3$    &  \\
& Shear modulus      & $\mu$        & 1.503 $\cdot 10^9$     & $\rm Pa$        &  \\ \hline
\textbf{Matrix} & Porosity           & $\phi$       & 0.2                &             &  \\
& Tortuosity         & $a$         & 2                  &             &  \\
& Permeability       & $k$          & $1\cdot 10^{-12}$ & $\rm m^2$       &  \\
& Lam\'e coefficient   & $\lambda$  & 1.8121$\cdot 10^9$    & $\rm Pa$        &  \\
& Biot's coefficient & $m$          & 7.2642$\cdot 10^9$   & $\rm Pa$        &  \\
& Biot's coefficient & $\beta$      & 0.9405                 &             &  \\  
 & Damping coefficient   & $\zeta$ & 0     & $\rm s^{-1}$        &  \\
\hline
\end{tabular}
\caption{Test case of Section~\ref{sec:complex_media}: physical parameters for the dark blue poroelastic domain in Figure~\ref{fig:domain_layered_media} (right).}
\label{tab::table_poroelastic_2}
\end{table}

We consider a seismic source as in the previous test case applied to the point $\bm x_s =(500, 780)~m$, with time variation given by a Ricker wavelet with time shift $t_0=0.2~s$ and peak frequency $f_p = 10~Hz$, cf. \eqref{eq:Ricker}.
We consider absorbing boundary conditions for the external elastic domain, cf. \eqref{eq:abso_el}, while we choose different values for $\delta$ 
in \eqref{eq::contstress_elporo}--\eqref{eq:noflowrate}. In particular, we consider $\delta =0,\frac12,1$.
For the numerical discretization, we consider a polygonal decomposition with both quadrilateral and triangular elements of characteristic size $h=25$, cf. Figure~\ref{fig:domain_layered_media} (right), a polynomial degree $p=r=3$, a time step $\Delta t = 0.01~s$ and a final time $T=1~s$. 

In Figure~\ref{fig:snap_complex} we report the snapshots at different times of the computed vertical velocity, for different values of $\delta$. From the results reported in Figure~\ref{fig:snap_complex}, we see that when the seismic waves, generated by the seismic source, reach the oil and gas reservoirs, they are reflected and transmitted. The seismic waves are clearly shown, and we can claim that there is no significant numerical dispersion. We can see the absorbing boundary conditions perform rather well for this complex model, and the seismic waves are absorbed by the boundary elements.
The effect of $\delta$ is also visible, the more $\delta$ is close to zero and the more the waves remain trapped inside the gas reservoirs. This is in agreement with the model hypothesis in Section~\ref{sec::physical} (see also \cite{AntoniettiMazzieriNatipoltri2021}).

\begin{figure}[h!]
\includegraphics[width=1\textwidth]{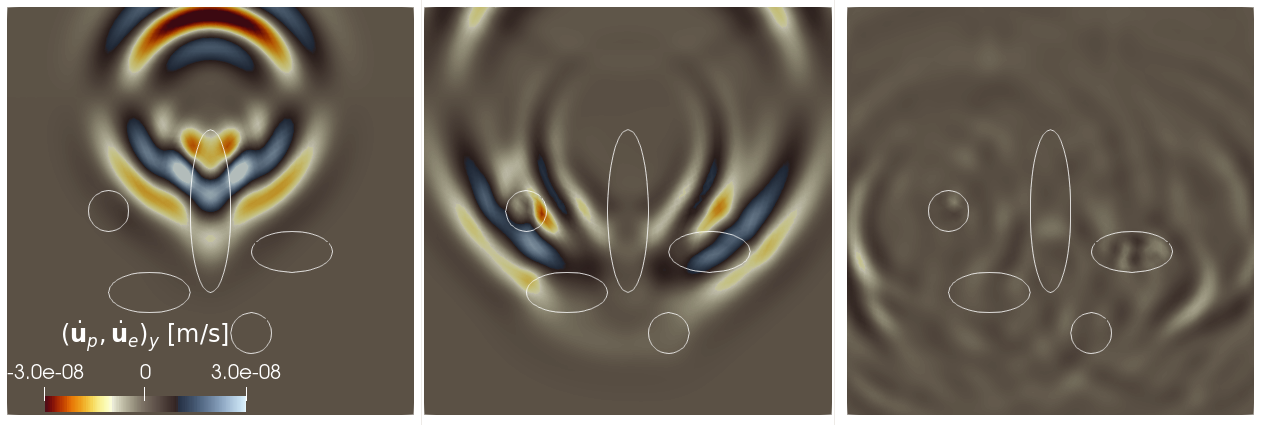}
\includegraphics[width=1\textwidth]{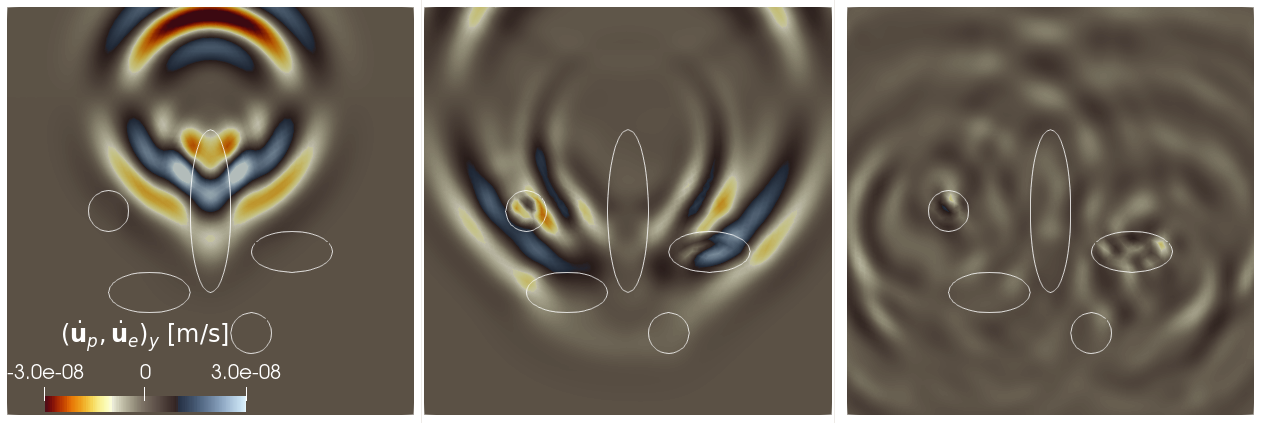}
\includegraphics[width=1\textwidth]{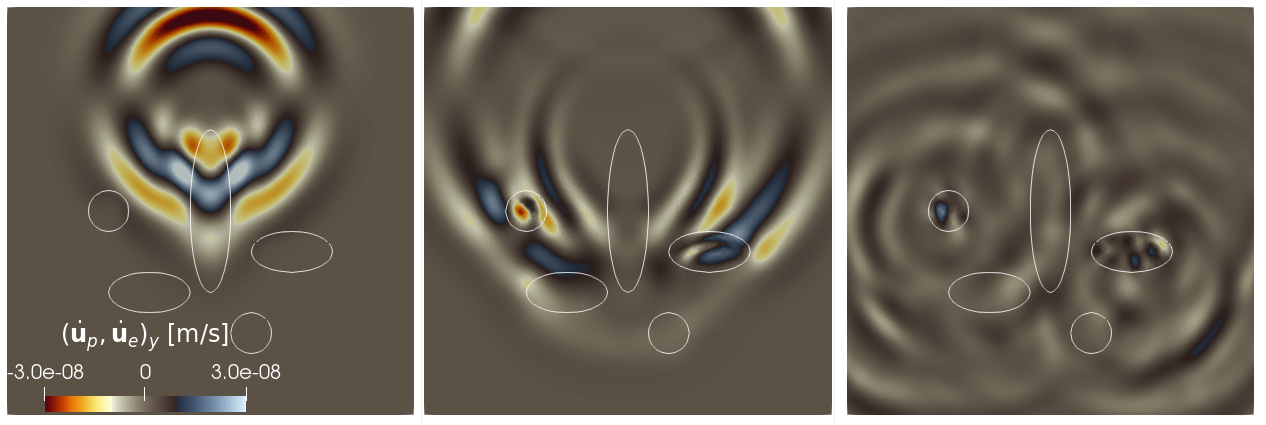}
\caption{Test case of Section~\ref{sec:complex_media}. 
Snapshots of the vertical component velocity $(\dot{\bm u}_p, \dot{\bm u}_e)_y$ at different time instants: from left to right $t=0.3,0.4,0.7~s$; top row $\delta=1$, middle row $\delta =0.5$, bottom row $\delta=0$.}
\label{fig:snap_complex}
\end{figure}

\section{Conclusions}\label{sec:conclusions}

In this work, we have presented a space-time 
PolydG methods for wave propagation problems in coupled poroelastic-elastic media.
Based on a displacement weak formulation of the problem, we proved stability and error bound for the semi-discretization, where (an interior penalty type) PolydGdG method is considered. Time integration is achieved by an unconditionally stable and implicit dG scheme, which also guarantees high-order accuracy in the time domain. Numerical experiments have been designed not only to verify the numerical performance of the space-time PolydG method but also to exploit the flexibility in the process of mesh design offered by polytopic elements. In this respect, numerical tests of geophysical interest have been also discussed. The presented space-time PolydG method allows a robust and flexible numerical discretization that can be successfully applied to multiphysics wave propagation problems. Future developments in this direction include the extension to realistic three-dimensional problems, the coupling of this model to fluid-structure (with poroelastic, thermo-elastic, or acoustic structure) interaction problems as well as the design of efficient solvers for the solution of the (linear) system of equations stemming from this space-time PolydG discretization.

\section{Acknowledgments} 
P.F.A. has been partially funded by the research grants PRIN2017 n. 201744KLJL and PRIN2020 n. 20204LN5N5 funded by the Italian Ministry of Universities and Research (MUR).  P.F.A. and I.M. have been partially funded by ICSC—Centro Nazionale di Ricerca in High Performance Computing, Big Data, and Quantum Computing funded by European Union—NextGenerationEU.
M.B., I.M., and P.F.A. are members of INdAM-GNCS. 
The work of M.B. has been partially supported by the INdAM-GNCS project CUP E55F22000270001.
The work of I.M. has been partially supported by the INdAM-GNCS project CUP E53C22001930001.
The present research has been partially supported by MUR, grant Dipartimento di Eccellenza 2023-2027

\appendix
\section{Definition of the coupling bilinear form}
The aim of this appendix is to motivate the definition of $\mathcal{C}_{h}(\cdot,\cdot)$ adopted in Section~\ref{sec:PolydG_form}.
We consider equations \eqref{eq::elasticity} and \eqref{eq::poroel}, and we multiply them by test functions $\bm w_e \in \bm V_h^e$ and $\bm w_p, \bm w_f \in \bm V_h^p$, respectively, integrate by parts element-wise and focus only to the interface terms, i.e., integrals on $\Gamma_I$, namely, 
$$
\begin{aligned}
\sum_{i=1}^5 T_i & = - \int_{\Gamma_I} \bm \sigma_{e} (\bm u_{e})  \bm n_e \cdot \bm w_{e} \,ds 
- \int_{\Gamma_I}  \bm \sigma_{e} (\bm u_{p})  \bm n_p \cdot \bm w_{p} \,ds  \nonumber + \delta \int_{\Gamma_I} \beta p(\bm u_{p},\bm u_{f}) \bm w_{p} \cdot \bm n_p \,ds \\ & + (1-\delta) \int_{\Gamma_I}  \beta p(\bm u_{p},\bm u_{f}) \bm w_{p} \cdot \bm n_p \,ds + \int_{\Gamma_I} p(\bm u_{p},\bm u_{f}) \bm w_{f} \cdot \bm n_p \,ds, 
\end{aligned}
$$
removing the subscript $h$ to ease the notation.
Next, we notice that, in view of condition \eqref{eq::contstress_elporo}, the terms $T_1 + T_2 + T_3$  can be rewritten as 
\begin{align}\label{eq::terms_on_gamma_I}
    T_1 + T_2 + T_3 & = - \int_{\Gamma_I} \bm \sigma_{e} (\bm u_e)  \bm n_p \cdot  (\bm w_p -\bm w_e) \,ds,
\end{align}
while $T_4+T_5$ as 
\begin{align}\label{eq::terms2_on_gamma_I}
    T_4 + T_5 & = - \int_{\Gamma_I} m \nabla \cdot (\beta  \bm u_{p} +   \bm u_{f}) ((1-\delta) \beta \bm w_{p} + \bm w_f) \cdot \bm n_p \,ds.
\end{align}
Then, we add to \eqref{eq::terms_on_gamma_I} and \eqref{eq::terms2_on_gamma_I} the following strongly consistent terms to ensure the symmetry and positivity of the resulting system: 
\begin{align}\label{eq::terms_on_gamma_I_stab}
    - \int_{\Gamma_I} \bm \sigma_{e} (\bm v_e) \bm n_p \cdot  (\bm u_p -\bm u_e) \,ds + \int_{\Gamma_I} \alpha (\bm u_p -\bm u_e) \cdot (\bm v_p -\bm v_e) \,ds,
\end{align}
and 
\begin{multline}\label{eq::terms2_on_gamma_I_stab}
    - \int_{\Gamma_I} m \nabla \cdot (\beta  \bm w_{p} +   \bm w_{f}) ((1-\delta) \beta \bm u_{p} + \bm u_f) \cdot \bm n_p \,ds  \\ + \gamma \int_{\Gamma_I} ((1-\delta) \beta \bm u_{p} + \bm u_f) \cdot \bm n_p  ((1-\delta) \beta \bm v_{p} + \bm v_f) \cdot \bm n_p \,ds,
\end{multline}
being $\alpha$ and $\gamma$ defined as in \eqref{eq::stab_1}--\eqref{eq::stab_2}. 
Finally, summing up equations \eqref{eq::terms_on_gamma_I}--\eqref{eq::terms2_on_gamma_I_stab} we get 
\begin{multline*}
    \mathcal{C}_{h}(\UU,\WW) = - \langle \bm \sigma_{e} (\bm u_e)  \bm n_p, \bm w_p -\bm w_e \rangle_{\mathcal{F}_h^{I}} - \langle \bm \sigma_{e} (\bm v_e) \bm n_p,  \bm u_p -\bm u_e \rangle_{\mathcal{F}_h^{I}} + \langle \alpha \bm u_p -\bm u_e, \bm v_p -\bm v_e \rangle_{\mathcal{F}_h^{I}}  
    \\
    - \langle m \nabla \cdot (\beta  \bm u_{p} +   \bm u_{f}), ((1-\delta) \beta \bm w_{p} + \bm w_f) \cdot \bm n_p \rangle_{\Gamma_I}   
    \\  
    - \langle m \nabla \cdot (\beta  \bm w_{p} +   \bm w_{f}),  ((1-\delta) \beta \bm u_{p} + \bm u_f) \cdot \bm n_p \rangle_{\mathcal{F}_h^{I}}  
    \\ 
    + \gamma \langle ((1-\delta) \beta \bm u_{p} + \bm u_f) \cdot \bm n_p,  ((1-\delta) \beta \bm v_{p} + \bm v_f) \cdot \bm n_p \rangle_{\mathcal{F}_h^{I}},
\end{multline*}
that is definition \eqref{def::bilinear_gammai}. Note that 
$\mathcal{C}_{h}(\UU,\WW) = \mathcal{C}_{h}(\WW,\UU)$.


\begin{thebibliography}{10}

\bibitem{AbPeHa06}
R.~Abedi, B.~Petracovici, and R.~B. Haber.
\newblock A space–time discontinuous {G}alerkin method for linearized
  elastodynamics with element-wise momentum balance.
\newblock {\em Computer Methods in Applied Mechanics and Engineering},
  195:3247--3273, 2006.

\bibitem{M2AN_2013__47_3_903_0}
C.~Agut and J.~Diaz.
\newblock Stability analysis of the {I}nterior {P}enalty {D}iscontinuous
  {G}alerkin method for the wave equation.
\newblock {\em ESAIM: Mathematical Modelling and Numerical Analysis -
  Mod\'elisation Math\'ematique et Analyse Num\'erique}, 47(3):903--932, 2013.

\bibitem{zunino}
I.~Ambartsumyan, E.~Khattatov, I.~Yotov, and P.~Zunino.
\newblock A {L}agrange multiplier method for a {S}tokes--{B}iot
  fluid--poroelastic structure interaction model.
\newblock {\em Numerische Mathematik}, 140(2):513--553, 2018.

\bibitem{Anaya2020}
V.~Anaya, Z.~de~Wijn, B.~G\'{o}mez-Vargas, D.~Mora, and R.~Ruiz-Baier.
\newblock Rotation-based mixed formulations for an elasticity-poroelasticity
  interface problem.
\newblock {\em SIAM Journal on Scientific Computing}, 42(1):B225--B249, 2020.

\bibitem{bonaldi}
P.~F. Antonietti, F.~Bonaldi, and I.~Mazzieri.
\newblock A high-order discontinuous {G}alerkin approach to the elasto-acoustic
  problem.
\newblock {\em Comput. Methods Appl. Mech. Engrg.}, 358:112634, 29, 2020.

\bibitem{AntoniettiBonaldiMazzieri_2019b}
P.~F. Antonietti, F.~Bonaldi, and I.~Mazzieri.
\newblock Simulation of three-dimensional elastoacoustic wave propagation based
  on a {D}iscontinuous {G}alerkin {S}pectral {E}lement {M}ethod.
\newblock {\em Internat. J. Numer. Methods Engrg.}, 121(10):2206--2226, 2020.

\bibitem{AntoniettiBonettiBotti_2023}
P.~F. Antonietti, S.~Bonetti, and M.~Botti.
\newblock Discontinuous galerkin approximation of the fully coupled
  thermo-poroelastic problem.
\newblock {\em SIAM Journal on Scientific Computing}, 45(2):A621--A645, 2023.

\bibitem{ABM_Vietnam}
P.~F. Antonietti, M.~Botti, and I.~Mazzieri.
\newblock On mathematical and numerical modelling of multiphysics wave
  propagation with polytopal discontinuous {G}alerkin methods: a review.
\newblock {\em Vietnam Journal of Mathematics}, pages 1--32, 2022.

\bibitem{AntoniettiMazzieriNatipoltri2021}
P.~F. Antonietti, M.~Botti, I.~Mazzieri, and S.~N. Poltri.
\newblock A high-order discontinuous galerkin method for the
  poro-elasto-acoustic problem on polygonal and polyhedral grids.
\newblock {\em SIAM Journal on Scientific Computing}, 44(1):B1--B28, 2022.

\bibitem{AnBrMa2009}
P.~F. Antonietti, F.~Brezzi, and L.~D. Marini.
\newblock Bubble stabilization of discontinuous galerkin methods.
\newblock {\em Comput. Methods Appl. Mech. Engrg.}, 198(21-26):1651--1659,
  2009.

\bibitem{Paper_Dg-Time}
P.~F. Antonietti, N.~Dal~Santo, I.~Mazzieri, and A.~Quarteroni.
\newblock A high-order discontinuous {G}alerkin approximation to ordinary
  differential equations with applications to elastodynamics.
\newblock {\em IMA Journal of Numerical Analysis}, 38(4):1709--1734, 2018.

\bibitem{AntoniettiGianiHouston_2013}
P.~F. Antonietti, S.~Giani, and P.~Houston.
\newblock {$hp$}-version composite {D}iscontinuous {G}alerkin methods for
  elliptic problems on complicated domains.
\newblock {\em SIAM J. Sci. Comput.}, 35(3):A1417--A1439, 2013.

\bibitem{AntoniettiMazzieri2018}
P.~F. Antonietti and I.~Mazzieri.
\newblock High-order discontinuous {G}alerkin methods for the elastodynamics
  equation on polygonal and polyhedral meshes.
\newblock {\em Comput. Methods Appl. Mech. Engrg.}, 342:414--437, 2018.

\bibitem{AntoniettiMiglioriniMazzieri2021}
P.~F. Antonietti, I.~Mazzieri, and F.~Migliorini.
\newblock A discontinuous {G}alerkin time integration scheme for second order
  differential equations with applications to seismic wave propagation
  problems.
\newblock {\em Computers and Mathematics with Applications}, 134:87--100, 2023.

\bibitem{AntoniettiMazzieriMuhrNikolicWohlmuth_2020}
P.~F. Antonietti, I.~Mazzieri, M.~Muhr, V.~Nikoli\'c, and B.~Wohlmuth.
\newblock A high-order discontinuous {G}alerkin method for nonlinear sound
  waves.
\newblock {\em J. Comput phys}, 415:109484, 2020.

\bibitem{Arnoldbrezzicockburnmarini2002}
D.~N. Arnold, F.~Brezzi, B.~Cockburn, and L.~D. Marini.
\newblock Unified analysis of discontinuous {G}alerkin methods for elliptic
  problems.
\newblock {\em SIAM Journal on Numerical Analysis}, 39(5):1749--1779, 2001/02.

\bibitem{banks2014high}
H.~T. Banks, M.~J. Birch, M.~P. Brewin, S.~E. Greenwald, S.~Hu, Z.~R. Kenz,
  C.~Kruse, M.~Maischak, S.~Shaw, and J.~R. Whiteman.
\newblock High-order space-time finite element schemes for acoustic and
  viscodynamic wave equations with temporal decoupling.
\newblock {\em International Journal for Numerical Methods in Engineering},
  98(2):131--156, 2014.

\bibitem{BaCaDiSh18}
H.~Barucq, H.~Calandra, J.~Diaz, and E.~Shishenina.
\newblock Space–time {T}refftz-dg approximation for elasto-acoustics.
\newblock {\em Applicable Analysis}, 99:747--760, 2018.

\bibitem{BaBoCoDiPiTe2012}
F.~Bassi, L.~Botti, A.~Colombo, D.~A. {Di Pietro}, and P.~Tesini.
\newblock On the flexibility of agglomeration based physical space
  discontinuous {G}alerkin discretizations.
\newblock {\em J. Comput. Phys.}, 231(1):45--65, 2012.

\bibitem{bause2017space}
M.~Bause, F.~A. Radu, and U.~K{\"o}cher.
\newblock Space--time finite element approximation of the biot poroelasticity
  system with iterative coupling.
\newblock {\em Computer Methods in Applied Mechanics and Engineering},
  320:745--768, 2017.

\bibitem{BERMUDEZ200317}
A.~Berm\`udez, R.~Rodr\`iguez, and D.~Santamarina.
\newblock Finite element approximation of a displacement formulation for
  time-domain elastoacoustic vibrations.
\newblock {\em Journal of Computational and Applied Mathematics}, 152(1):17 --
  34, 2003.

\bibitem{biot1955theory}
M.~A. Biot.
\newblock Theory of elasticity and consolidation for a porous anisotropic
  solid.
\newblock {\em Journal of applied physics}, 26(2):182--185, 1955.

\bibitem{bonetti2023numerical}
S.~Bonetti, M.~Botti, I.~Mazzieri, and P.~F. Antonietti.
\newblock Numerical modelling of wave propagation phenomena in
  thermo-poroelastic media via discontinuous galerkin methods, 2023.

\bibitem{burger2021virtual}
R.~B{\"u}rger, S.~Kumar, D.~Mora, R.~Ruiz-Baier, and N.~Verma.
\newblock Virtual element methods for the three-field formulation of
  time-dependent linear poroelasticity.
\newblock {\em Advances in Computational Mathematics}, 47:1--37, 2021.

\bibitem{butcher2016}
J.~Butcher.
\newblock {\em Runge–Kutta Methods}, chapter~3, pages 143--331.
\newblock John Wiley \& Sons, Ltd, 2016.

\bibitem{cangiani2020hpversion}
A.~Cangiani, Z.~Dong, and E.~Georgoulis.
\newblock hp-version discontinuous galerkin methods on essentially
  arbitrarily-shaped elements.
\newblock {\em Mathematics of Computation}, 91(333):1--35, 2022.

\bibitem{CangianiDongGeorgoulis_2017}
A.~Cangiani, Z.~Dong, and E.~H. Georgoulis.
\newblock {$hp$}-version space-time discontinuous {G}alerkin methods for
  parabolic problems on prismatic meshes.
\newblock {\em SIAM J. Sci. Comput.}, 39(4):A1251--A1279, 2017.

\bibitem{CangianiDongGeorgoulisHouston_2016}
A.~Cangiani, Z.~Dong, E.~H. Georgoulis, and P.~Houston.
\newblock {$hp$}-version discontinuous {G}alerkin methods for
  advection-diffusion-reaction problems on polytopic meshes.
\newblock {\em ESAIM Math. Model. Numer. Anal.}, 50(3):699--725, 2016.

\bibitem{CangianiDongGeorgoulisHouston_2017}
A.~Cangiani, Z.~Dong, E.~H. Georgoulis, and P.~Houston.
\newblock {\em $hp$-version discontinuous {G}alerkin methods on polytopic
  meshes}.
\newblock SpringerBriefs in Mathematics. Springer International Publishing,
  2017.

\bibitem{cangiani2014hp}
A.~Cangiani, E.~H. Georgoulis, and P.~Houston.
\newblock hp-version discontinuous {G}alerkin methods on polygonal and
  polyhedral meshes.
\newblock {\em Mathematical Models and Methods in Applied Sciences},
  24(10):2009--2041, 2014.

\bibitem{carcione2014book}
J.~Carcione.
\newblock {\em Wave Fields in Real Media}, volume~38.
\newblock Elsevier Science, 2014.

\bibitem{castagnede1998ultrasonic}
B.~Castagnede, A.~Aknine, M.~Melon, and C.~Depollier.
\newblock Ultrasonic characterization of the anisotropic behavior of
  air-saturated porous materials.
\newblock {\em Ultrasonics}, 36(1-5):323--341, 1998.

\bibitem{chiavassa_lombard_2013}
G.~Chiavassa and B.~Lombard.
\newblock Wave propagation across acoustic/{B}iot's media: A finite-difference
  method.
\newblock {\em Communications in Computational Physics}, 13(4):985--1012, 2013.

\bibitem{CongreveHouston2019}
S.~Congreve and P.~Houston.
\newblock Two-grid hp-{DGFEM}s on agglomerated coarse meshes.
\newblock {\em PAMM}, 19(1):e201900175, 2019.

\bibitem{dai1995wave}
N.~Dai, A.~Vafidis, and E.~Kanasewich.
\newblock Wave propagation in heterogeneous, porous media; a velocity-stress,
  finite-difference method.
\newblock {\em Geophysics}, 60(2):327--340, 1995.

\bibitem{delapuente2008}
J.~de~la Puente, M.~Dumbser, M.~K\"aser, and H.~Igel.
\newblock Discontinuous {G}alerkin methods for wave propagation in poroelastic
  media.
\newblock {\em Geophysics}, 73(5):T77--T97, 2008.

\bibitem{Delfour81}
M.~Delfour, W.~Hager, and F.~Trochu.
\newblock Discontinuous {G}alerkin methods for ordinary differential equations.
\newblock {\em Mathematics of Computation}, 36(154):455--473, 1981.

\bibitem{DoFiWi16}
W.~D\"{o}rfler, S.~Findeisen, and C.~Wieners.
\newblock Space-time discontinuous {G}alerkin discretizations for linear
  first-order hyperbolic evolution systems.
\newblock {\em Computer Methods in Applied Mathematics}, 16:409--428, 2016.

\bibitem{dupuy2011wave}
B.~Dupuy, L.~De~Barros, S.~Garambois, and J.~Virieux.
\newblock Wave propagation in heterogeneous porous media formulated in the
  frequency-space domain using a discontinuous galerkin method.
\newblock {\em Geophysics}, 76(4):N13--N28, 2011.

\bibitem{ezziani}
A.~Ezziani.
\newblock {\em {Mod{\'e}lisation math{\'e}matique et num{\'e}rique de la
  propagation d'ondes dans les milieux visco{\'e}lastiques et
  poro{\'e}lastiques}}.
\newblock Theses, {ENSTA ParisTech}, 2005.

\bibitem{ferro2006wave}
M.~Ferro and W.~Mansur.
\newblock Wave propagation in elastic and poroelastic media in the frequency
  domain by the boundary element method.
\newblock In {\em 28th World conference on boundary elements and other mesh
  reduction methods. WIT transactions on modelling and simulation}, volume~42,
  pages 285--293, 2006.

\bibitem{FKTW2010}
B.~Flemisch, M.~Kaltenbacher, S.~Triebenbacher, and B.~Wohlmuth.
\newblock The equivalence of standard and mixed finite element methods in
  applications to elasto-acoustic interaction.
\newblock {\em SIAM J. Sci. Comput.}, 32(4):1980–2006, 2010.

\bibitem{Flemisch2006}
B.~Flemisch, M.~Kaltenbacher, and B.~Wohlmuth.
\newblock Elasto-acoustic and acoustic-acoustic coupling on non-matching grids.
\newblock {\em Internat. J. Numer. Methods Engrg.}, 67:1791--1810, 2006.

\bibitem{FRENCH1993}
D.~A. French.
\newblock A space-time finite element method for the wave equation.
\newblock {\em Computer Methods in Applied Mechanics and Engineering},
  107(1):145 -- 157, 1993.

\bibitem{girault2011domain}
V.~Girault, G.~Pencheva, M.~Wheeler, and T.~Wildey.
\newblock Domain decomposition for poroelasticity and elasticity with dg jumps
  and mortars.
\newblock {\em Mathematical Models and Methods in Applied Sciences},
  21(01):169--213, 2011.

\bibitem{GoScWi17}
J.~Gopalakrishnan, J.~Sch\"{o}berl, and C.~Wintersteiger.
\newblock Mapped tent pitching schemes for hyperbolic systems.
\newblock {\em Computational Methods in Science and Engineering},
  39:B1043--B1063, 2017.

\bibitem{David2021}
D.~Gregor, P.~Moczo, J.~Kristek, A.~Mesgouez, G.~Lefeuve-Mesgouez, C.~Morency,
  J.~Diaz, and M.~Kristekova.
\newblock {Seismic waves in medium with poroelastic/elastic interfaces: a
  two-dimensional P-SV finite-difference modelling}.
\newblock {\em Geophysical Journal International}, 228(1):551--588, 2021.

\bibitem{HAIRE1999291}
T.~Haire and C.~Langton.
\newblock Biot theory: a review of its application to ultrasound propagation
  through cancellous bone.
\newblock {\em Bone}, 24(4):291 -- 295, 1999.

\bibitem{HE2019116}
Y.~He, T.~Chen, and J.~Gao.
\newblock Unsplit perfectly matched layer absorbing boundary conditions for
  second-order poroelastic wave equations.
\newblock {\em Wave Motion}, 89:116--130, 2019.

\bibitem{Hughes88}
T.~Hughes and G.~Hulbert.
\newblock Space-time finite element methods for elastodynamics: formulation and
  error estimates.
\newblock {\em Computer Methods in Applied Mechanics and Engineering},
  66:339--363, 1988.

\bibitem{Jay2015}
L.~O. Jay.
\newblock {\em Lobatto Methods}, pages 817--826.
\newblock Springer Berlin Heidelberg, Berlin, Heidelberg, 2015.

\bibitem{JOHNSON1993}
C.~Johnson.
\newblock Discontinuous {G}alerkin finite element methods for second order
  hyperbolic problems.
\newblock {\em Computer Methods in Applied Mechanics and Engineering},
  107(1):117 -- 129, 1993.

\bibitem{KrMo16}
F.~Kretzschmar, A.~Moiola, I.~Perugia, and S.~M. Schenpp.
\newblock \textit{A priori} error analysis of space-time {T}refftz
  discontinuous {G}alerkin methods for wave problems.
\newblock {\em IMA Journal of Numerical Analysis}, 36:1599--1635, 2016.

\bibitem{krishnan}
B.~Krishnan, D.~M., S.~Raja, and K.~Venkataramana.
\newblock Structural and {V}ibroacoustic {A}nalysis of {A}ircraft {F}uselage
  {S}ection with {P}assive {N}oise {R}educing {M}aterials: {A} {M}aterial
  {P}erformance {S}tudy.
\newblock In {\em 4th International Engineering Symposium}, 03 2015.

\bibitem{lefeuve2012semi}
G.~Lefeuve-Mesgouez, A.~Mesgouez, G.~Chiavassa, and B.~Lombard.
\newblock Semi-analytical and numerical methods for computing transient waves
  in 2d acoustic/poroelastic stratified media.
\newblock {\em Wave Motion}, 49(7):667--680, 2012.

\bibitem{Martin2008}
R.~Martin, D.~Komatitsch, and A.~Ezziani.
\newblock An unsplit convolutional perfectly matched layer improved at grazing
  incidence for seismic wave propagation in poroelastic media.
\newblock {\em Geophysics}, 73(4):T51--T61, 2008.

\bibitem{masson2010finite}
Y.~J. Masson and S.~R. Pride.
\newblock Finite-difference modeling of biot's poroelastic equations across all
  frequencies.
\newblock {\em Geophysics}, 75(2):N33--N41, 2010.

\bibitem{MoPe18}
A.~Moiola and I.~Perugia.
\newblock A space–time {T}refftz discontinuous {G}alerkin method for the
  acoustic wave equation in first-order formulation.
\newblock {\em Numerische Mathematik}, 138:389--435, 2018.

\bibitem{Morency2011}
C.~Morency, Y.~Luo, and J.~Tromp.
\newblock {Acoustic, elastic and poroelastic simulations of CO2 sequestration
  crosswell monitoring based on spectral-element and adjoint methods}.
\newblock {\em Geophysical Journal International}, 185(2):955--966, 2011.

\bibitem{Morency2008}
C.~Morency and J.~Tromp.
\newblock {Spectral-element simulations of wave propagation in porous media}.
\newblock {\em Geophysical Journal International}, 175(1):301--345, 2008.

\bibitem{nochetto2018space}
R.~H. Nochetto, S.~A. Sauter, and C.~Wieners.
\newblock Space-time methods for time-dependent partial differential equations.
\newblock {\em Oberwolfach Reports}, 14(1):863--947, 2018.

\bibitem{Peng2020}
H.~Peng, Y.~Sripanich, I.~Vasconcelos, and J.~Trampert.
\newblock {Benchmarking wave equation solvers using interface conditions: the
  case of porous media}.
\newblock {\em Geophysical Journal International}, 224(1):355--376, 2020.

\bibitem{peng2021benchmarking}
H.~Peng, Y.~Sripanich, I.~Vasconcelos, and J.~Trampert.
\newblock Benchmarking wave equation solvers using interface conditions: the
  case of porous media.
\newblock {\em Geophysical Journal International}, 224(1):355--376, 2021.

\bibitem{PeScStWi20}
I.~Perugia, J.~Sch\"{o}berl, P.~Stocker, and C.~Wintersteiger.
\newblock Tent pitching and {T}refftz-dg method for the acoustic wave equation.
\newblock {\em Computers \& Mathematics with Applications}, 79:2987--3000,
  2020.

\bibitem{phillips2008coupling}
P.~J. Phillips and M.~F. Wheeler.
\newblock A coupling of mixed and discontinuous galerkin finite-element methods
  for poroelasticity.
\newblock {\em Computational Geosciences}, 12:417--435, 2008.

\bibitem{qss2007}
A.~Quarteroni, R.~Sacco, and F.~Saleri.
\newblock {\em Numerical Mathematics}, volume~2.
\newblock Springer Berlin, Heidelberg, 2007.

\bibitem{rockafellar1993lagrange}
R.~T. Rockafellar.
\newblock Lagrange multipliers and optimality.
\newblock {\em SIAM review}, 35(2):183--238, 1993.

\bibitem{Sidler2010}
R.~Sidler, J.~M. Carcione, and K.~Holliger.
\newblock {Simulation of surface waves in porous media}.
\newblock {\em Geophysical Journal International}, 183(2):820--832, 2010.

\bibitem{Southworth2021}
B.~S. Southworth, O.~Krzysik, W.~Pazner, and H.~D. Sterck.
\newblock Fast solution of fully implicit runge--kutta and discontinuous
  galerkin in time for numerical pdes, part i: the linear setting.
\newblock {\em SIAM Journal on Scientific Computing}, 44(1):A416--A443, 2022.

\bibitem{souzanchi2013tortuosity}
M.~Souzanchi, L.~Cardoso, and S.~Cowin.
\newblock Tortuosity and the averaging of microvelocity fields in
  poroelasticity.
\newblock {\em Journal of applied mechanics}, 80(2), 2013.

\bibitem{Stacey}
R.~Stacey.
\newblock {Improved transparent boundary formulations for the elastic-wave
  equation}.
\newblock {\em Bulletin of the Seismological Society of America},
  78(6):2089--2097, 1988.

\bibitem{stein1970singular}
E.~M. Stein.
\newblock {\em Singular integrals and differentiability properties of
  functions}, volume~2.
\newblock Princeton University Press, 1970.

\bibitem{TKWF2010}
S.~Triebenbacher, M.~Kaltenbacher, B.~Wohlmuth, and B.~Flemisch.
\newblock Applications of the mortar finite element method in vibroacoustics
  and flow induced noise computations.
\newblock {\em Acta Acustica united with Acustica}, 96(3):536--553(18), 2010.

\bibitem{Vegt2006}
J.~van~der Vegt, C.~Klaji, F.~van~der Bos, and H.~van~der Ven.
\newblock Space-time discontinuous {G}alerkin method for the compressible
  navier-stokes equations on deforming meshes.
\newblock {\em European Conference on Computational Fluid Dynamics ECCOMAS CFD
  2006}, 2006.

\bibitem{Vashisth1991}
A.~K. Vashisth, M.~D. Sharma, and M.~L. Gogna.
\newblock {Reflection and transmission of elastic waves at a loosely bonded
  interface between an elastic solid and liquid-saturated porous solid}.
\newblock {\em Geophysical Journal International}, 105(3):601--617, 1991.

\bibitem{ward2017discontinuous}
N.~D. Ward, T.~L{\"a}hivaara, and S.~Eveson.
\newblock A discontinuous galerkin method for poroelastic wave propagation: The
  two-dimensional case.
\newblock {\em Journal of Computational Physics}, 350:690--727, 2017.

\bibitem{wenzlau2009finite}
F.~Wenzlau and T.~M. M{\"u}ller.
\newblock Finite-difference modeling of wave propagation and diffusion in
  poroelastic media.
\newblock {\em Geophysics}, 74(4):T55--T66, 2009.

\bibitem{zhan2020unified}
Q.~Zhan, M.~Zhuang, Y.~Mao, and Q.~H. Liu.
\newblock Unified riemann solution for multi-physics coupling: Anisotropic
  poroelastic/elastic/fluid interfaces.
\newblock {\em Journal of Computational Physics}, 402:108961, 2020.

\bibitem{zhang20143d}
Y.~Zhang and J.~Gao.
\newblock A 3d staggered-grid finite difference scheme for poroelastic wave
  equation.
\newblock {\em Journal of Applied Geophysics}, 109:281--291, 2014.

\bibitem{Zhang2019}
Y.~Zhang, J.~Gao, W.~Han, and Y.~He.
\newblock A discontinuous galerkin method for seismic wave propagation in
  coupled elastic and poroelastic media.
\newblock {\em Geophysical Prospecting}, 67(5):1392--1403, 2019.

\end{thebibliography}

\end{document}